\def\rr{{\mathbb R}}
\def\rn{{\mathbb{R}^n}}
\def\urn{\mathbb{R}_+^{n+1}}
\def\zz{{\mathbb Z}}
\def\cc{{\mathbb C}}
\def\nn{{\mathbb N}}
\def\ca{{\mathcal A}}
\def\cb{{\mathcal B}}
\def\cf{{\mathcal F}}
\def\cl{{\mathcal L}}
\def\cm{{\mathcal M}}
\def\cp{{\mathcal P}}
\def\cs{{\mathcal S}}
\def\ct{{\mathcal T}}
\def\cx{{\mathcal X}}
\def\cy{{\mathcal Y}}
\def\fz{\infty }
\def\az{\alpha}
\def\bz{\beta}
\def\ez{\epsilon}
\def\gz{{\gamma}}
\def\lz{\lambda}
\def\vz{\varphi}
\def\vez{\varepsilon}
\def\lf{\left}
\def\r{\right}
\def\hs{\hspace{0.25cm}}
\def\ls{\lesssim}
\def\noz{\nonumber}
\def\wz{\widetilde}
\def\wh{\widehat}
\def\st{\subset}
\def\loc{{\mathop\mathrm{\,loc\,}}}
\def\supp{\mathop\mathrm{\,supp\,}}
\def\vlp{{L^{p(\cdot)}(\rn)}}
\def\vhs{H^{p(\cdot)}(\rn)}
\def\hlp{H_L^{p(\cdot)}(\rn)}
\newtheorem{theorem}{Theorem}[section]
\newtheorem{lemma}[theorem]{Lemma}
\newtheorem{corollary}[theorem]{Corollary}
\newtheorem{proposition}[theorem]{Proposition}
\theoremstyle{definition}
\newtheorem{remark}[theorem]{Remark}
\newtheorem{definition}[theorem]{Definition}
\newtheorem{assumption}[theorem]{Assumption}
\renewcommand{\appendix}{\par
   \setcounter{section}{0}%
   \setcounter{subsection}{0}%
   \setcounter{subsubsection}{0}%
   \gdef\thesection{\@Alph\c@section}%
   \gdef\thesubsection{\@Alph\c@section.\@arabic\c@subsection}%
   \gdef\theHsection{\@Alph\c@section.}%
   \gdef\theHsubsection{\@Alph\c@section.\@arabic\c@subsection}%
   \csname appendixmore\endcsname
 }
\numberwithin{equation}{section}
\begin{document}

\arraycolsep=1pt

\title{\bf\Large Molecular Characterizations and Dualities of
Variable Exponent Hardy Spaces Associated with Operators
\footnotetext{\hspace{-0.35cm} 2010 {\it
Mathematics Subject Classification}. Primary 42B35;
Secondary 42B30, 35K08, 47D03.
\endgraf {\it Key words and phrases.} Hardy space, BMO space,
variable exponent, operator, heat kernel, molecule
\endgraf This project is supported by the National
Natural Science Foundation of China
(Grant Nos.~11571039 and 11361020),
the Specialized Research Fund for the Doctoral Program of Higher Education
of China (Grant No. 20120003110003) and the Fundamental Research
Funds for Central Universities of China
(Grant Nos.~2013YB60 and 2014KJJCA10). }}
\author{Dachun Yang and Ciqiang Zhuo\footnote{Corresponding author}}
\date{}
\maketitle

\vspace{-0.8cm}

\begin{center}
\begin{minipage}{13cm}
{\small {\bf Abstract}\quad
Let $L$ be a linear operator on $L^2(\mathbb R^n)$ generating
an analytic semigroup $\{e^{-tL}\}_{t\ge0}$ with kernels having pointwise upper bounds and
$p(\cdot):\ \mathbb R^n\to(0,1]$ be a variable exponent function satisfying
the globally log-H\"older continuous condition. In this article, the authors
introduce the variable exponent Hardy space associated with
the operator $L$, denoted by $H_L^{p(\cdot)}(\mathbb R^n)$, and
the BMO-type space ${\mathrm{BMO}}_{p(\cdot),L}(\mathbb R^n)$.
By means of tent spaces with variable exponents,
the authors then establish the molecular characterization of
$H_L^{p(\cdot)}(\mathbb R^n)$ and a duality theorem between such a Hardy space
and a BMO-type space. As applications, the authors study the boundedness
of the fractional integral on these Hardy spaces and the coincidence
between $H_L^{p(\cdot)}(\mathbb R^n)$ and
the variable exponent Hardy spaces $H^{p(\cdot)}(\mathbb R^n)$.
}
\end{minipage}
\end{center}

\vspace{-0.1cm}

\section{Introduction\label{s1}}
\hskip\parindent
In recent years, function spaces with variable exponents attract much attentions
(see, for example, \cite{ah10,cfbook,cw14,dhr11,dhr09,DDr15,ns12,
Sa13,yzy14,yzy15,zsy15,zyl14} and their references).
The variable exponent Lebesgue space $\vlp$, with an exponent function
$p(\cdot):\ \rn\to(0,\fz)$, which consists of all measurable functions $f$ such that
$\int_\rn |f(x)|^{p(x)}\,dx<\fz$, is a generalization of the classical Lebesgue space.
The study of variable exponent Lebesgue spaces can be traced back to
Birnbaum-Orlicz \cite{bo31} and Orlicz \cite{ol32}
(see also Luxemburg \cite{Lu55} and Nakano \cite{nak50,nak51}),
but the modern
development started with the articles \cite{kr91} of Kov\'a\v{c}ik
and R\'akosn\'{\i}k as well as \cite{cruz03} of Cruz-Uribe and \cite{din04} of Diening.
The variable function spaces have been
widely used in the study of harmonic analysis; see, for example,
\cite{cfbook,dhr11}. Apart from theoretical considerations,
such function spaces also have interesting applications in fluid dynamics
 \cite{am02,rm00}, image processing \cite{clr06},
partial differential equations and variational calculus
\cite{am05,hhl08,su09}.

Particularly, Nakai and Sawano \cite{ns12} introduced Hardy spaces with variable
exponents, $H^{p(\cdot)}(\rn)$, and established their atomic characterizations
which were further applied to consider dual spaces of such Hardy spaces.
Later, in \cite{Sa13}, Sawano extended the
atomic characterization of the space $H^{p(\cdot)}(\rn)$ in \cite{ns12},
which also improves the corresponding result in \cite{ns12},
and gave out more applications including the boundedness of the
fractional integral operator and the commutators generated by singular integral
operators and BMO functions, and an Olsen's inequality.
After that, Zhuo et al. \cite{zyl14} established their equivalent characterizations
via intrinsic square functions, including the intrinsic Lusin area function,
the intrinsic $g$-function and the intrinsic $g_\lz^\ast$-function. Independently,
Cruz-Uribe and Wang \cite{cw14} also investigated the variable exponent Hardy
space with some slightly weaker conditions than those used in \cite{ns12}.
Recall that the theory of classical Hardy spaces $H^p(\rn)$ with $p\in(0,1]$
and their duals are well studied and certainly play an important role in
harmonic analysis as well as partial differential equations;
see, for example, \cite{CoWe77,FeSt72,muller94,{stein93}}.

On the other hand, in recent years, the study of function spaces,
especially on Hardy spaces associated with different operators, has also inspired great interests
(see, for example, \cite{admun,dxy07,dy05,dy05cpam,jy10,jyz09,yan08,ls11}
and their references).
Particularly, let $L$ be a linear operator on $L^2(\mathbb R^n)$ and
generate an analytic semigroup $\{e^{-tL}\}_{t\ge0}$ with kernel having
pointwise upper bounds, whose decay is measured by $\theta(L)\in(0,\fz]$.
Then, by using the Lusin area function,
Auscher, Duong and McIntosh \cite{admun} initially introduced
the Hardy space $H_L^1(\rn)$ associated with the operator $L$ and established
its molecular characterization. Based on this, Duong and Yan \cite{dy05,dy05cpam}
introduced the BMO-type space BMO$_L(\rn)$ associated with $L$
and proved that the dual space of $H_L^1(\rn)$ is just BMO$_{L^\ast}(\rn)$,
where $L^\ast$ denotes the \emph{adjoint operator} of $L$ in $L^2(\rn)$.
Later, Yan \cite{yan08} further generalized these results to the Hardy spaces
$H_L^p(\rn)$ with $p\in(n/[n+\theta(L)],1]$ and their dual spaces.
Moreover, Jiang et al. \cite{jyz09} investigated the Orlicz-Hardy space
and its dual space associated with such an operator $L$.

Let $p(\cdot):\ \mathbb R^n\to(0,1]$ be a variable exponent function satisfying
the globally log-H\"older continuous condition.
Motivated by \cite{ns12,yan08}, in this article,
we introduce the variable exponent Hardy space associated with the operator $L$,
denoted by $H_L^{p(\cdot)}(\rn)$. More precisely,
for all $f\in L^2(\rn)$ and $x\in\rn$, let
\begin{equation*}
S_L(f)(x):=\lf\{\int_{\Gamma(x)}\lf|t^mLe^{-t^mL}(f)(y)
\r|^2\,\frac{dydt}{t^{n+1}}\r\}^{\frac12},
\end{equation*}
where $m$ is a positive constant appearing in the pointwise upper bound of the heat kernel
(see \eqref{assump1} below) and $\Gamma(x):=\{(y,t)\in\rn\times(0,\fz):\ |y-x|<t\}$.
The \emph{Hardy spaces $H_L^{p(\cdot)}(\rn)$} is defined to be the
completion of the set $\{f\in L^2(\rn):\ S_L(f)\in\vlp\}$
with respect to the quasi-norm
$$\|f\|_{H_L^{p(\cdot)}(\rn)}:=\|S_L(f)\|_{\vlp}
:=\inf\lf\{\lz\in(0,\fz):\ \int_\rn\lf[\frac{S_L(f)(x)}{\lz}\r]^{p(x)}\,dx\le1\r\}.$$
We then establish the molecular characterization of $H_L^{p(\cdot)}(\rn)$
via variable exponent tent spaces. Using this
molecular characterization, we further prove that the dual space of
$H_L^{p(\cdot)}(\rn)$ is the BMO-type space BMO$_{p(\cdot),L^\ast}(\rn)$, which is also
introduced in this article. As more applications, we study the boundedness
of the fractional integral $L^{-\gamma}$ ($\gamma\in(0,\frac nm)$ with $m$ as in
Assumption (A) below)
from $H_L^{p(\cdot)}(\rn)$ to $H_L^{q(\cdot)}(\rn)$ with
$\frac 1{q(\cdot)}:=\frac1{p(\cdot)}-\frac {m\gz}n$
and the coincidence between $H_L^{p(\cdot)}(\mathbb R^n)$
and variable exponent Hardy spaces $H^{p(\cdot)}(\rn)$ introduced in \cite{ns12}.

A novel aspect of this article is to give a non-trivial combination of function
spaces with variable exponents and the theory of operators including their functional
calculi and semigroups, and these new function spaces prove necessary in the study of the boundedness
of the associated operators (for example, fractional integrals $L^{-\gz}$ with $\gamma\in(0,\frac nm)$).

This article is organized as follows.

In Section \ref{s2},
we first recall some notation and definitions about variable exponent Lebesgue spaces,
holomorphic functional calculi of operators and semigroups,
also including some basic assumptions on the operator $L$ considered in this article
and the domain of the semigroup $\{e^{-tL}\}_{t\ge0}$. Via the Lusin area function $S_L(f)$,
we then introduce the variable exponent Hardy space associated with $L$, denoted by
$H_L^{p(\cdot)}(\rn)$.

In Section \ref{s3}, we mainly establish a molecular characterization
of the space $H_L^{p(\cdot)}(\rn)$ (see Theorem \ref{t-mol} below).
To this end, we first establish an atomic characterization
of the variable exponent tent space $T_2^{p(\cdot)}(\urn)$
(see Corollary \ref{c-tent} below). Then the
molecular characterization of $H_L^{p(\cdot)}(\rn)$ is obtained by using a
project operator
$\pi_L$ corresponding to $L$, which is proved to be bounded from
$T_2^{p(\cdot)}(\urn)$ to
$H_L^{p(\cdot)}(\rn)$. We point out that \cite[Lemma 4.1]{Sa13} of Sawano
(a slight weaker variant of this lemma was early obtained by Nakai and Sawano
\cite[Lemma 4.11]{ns12}),
which is re-stated in Lemma \ref{l-estimate} below, plays a key role
in the proof of Theorem \ref{t-mol}

Section \ref{s4} is devoted to proving a duality theorem.
Indeed, in Theorem \ref{t-dual} below, we show that the dual space
of $H_L^{p(\cdot)}(\rn)$ is just the BMO-type space BMO$_{p(\cdot),L^\ast}(\rn)$,
which is also introduced in this section.
To show Theorem \ref{t-dual}, we rely on several key estimates
related to BMO-type spaces and $p(\cdot)$-Carleson measures
(see Propositions \ref{p-bmo1}, \ref{p-bmo2} and \ref{p-carleson}, and Lemma
\ref{l-dual} below), and the duality of the variable exponent tent space
(see Proposition \ref{p-dual} below).
The main difficulty to establish these estimates is that
the quasi-norm $\|\cdot\|_{\vlp}$ has no the translation invariance,
namely, for any cube $Q(x,r)\st\rn$, with $x\in\rn$ and $r\in (0,\fz)$, and $z\in\rn$,
$\|\chi_{Q(x,r)}\|_{\vlp}$ may not equal to $\|\chi_{Q(x+z,r)}\|_{\vlp}$.
To overcome this difficulty, we make full use of Lemma \ref{l-bigsball} below, which
is just \cite[Lemma 2.6]{zyl14} and presents a relation between
two quasi-norms $\|\cdot\|_{\vlp}$ corresponding to two cubes.

As applications of the molecular characterization of $H_L^{p(\cdot)}(\rn)$
from Theorem \ref{t-mol}, in Section \ref{s5},
we investigate the boundedness of fractional integrals on $H_L^{p(\cdot)}(\rn)$
(see Theorem \ref{t-frac} below)
and show that the spaces $H_L^{p(\cdot)}(\rn)$ and $H^{p(\cdot)}(\rn)$ coincide
with equivalent quasi-norms under some additional assumptions on $L$
(see Theorem \ref{t-5.2x} below).

\section{Preliminaries\label{s2}}
\hskip\parindent
In this section, we first recall some notation and notions on
variable exponent Lebesgue spaces and some knowledge
about holomorphic functional calculi as well as semigroups.
Then we introduce the variable exponent Hardy spaces associated with
operators, denoted by $H_L^{p(\cdot)}(\rn)$, which generalize the Hardy spaces
$H_L^{p}(\rn)$ studied in \cite{dy05,yan08}.

We begin with some notation which will be used in this article.
Let $\nn:=\{1,2,\dots\}$ and $\zz_+:=\nn\cup\{0\}$.
We denote by $C$ a \emph{positive constant} which is independent of the main
parameters, but may vary from line to line. We use $C_{(\az,\dots)}$
to denote a positive constant depending on the indicated
parameters $\az,\, \dots$. The \emph{symbol}
$A\ls B$ means $A\le CB$. If $A\ls B$ and $B\ls A$, then we write $A\sim B$.
If $E$ is a subset of $\rn$, we denote by $\chi_E$ its
\emph{characteristic function} and by $E^\complement$ the set $\rn\backslash E$.
For $a \in {\mathbb R}$,
$\lfloor a \rfloor$ denotes the largest integer $m$
such that $m \le a$. For all $x\in\rn$ and $r\in(0,\fz)$, denote by $Q(x,r)$
the cube centered at $x$ with side length $r$, whose sides are parallel to the axes
of coordinates. For each cube $Q\st\rn$ and $a\in(0,\fz)$,
we use $x_Q$ to denote the center of $Q$ and $\ell(Q)$ to denote the side length
of $Q$, and denote by $aQ$
the cube concentric with $Q$ having the side length $a\ell(Q)$.

\subsection{Variable exponent Lebesgue spaces}
\hskip\parindent
In what follows, a measurable function $p(\cdot):\ \rn\to(0,\fz)$ is called a
\emph{variable exponent}. For any variable exponent $p(\cdot)$, let
\begin{equation}\label{2.1x}
p_-:=\mathop{\rm ess\,inf}\limits_{x\in \rn}p(x)
\quad {\rm and}\quad
p_+:=\mathop{\rm ess\,sup}\limits_{x\in \rn}p(x).
\end{equation}
Denote by $\cp(\rn)$ the \emph{collection of variable exponents}
$p(\cdot):\ \rn\to(0,\fz)$ \emph{satisfying} $0<p_-\le p_+<\fz$.

For a measurable function $f$ on $\rn$ and a variable exponent $p(\cdot)\in\cp(\rn)$,
the \emph{modular} $\varrho_{p(\cdot)}(f)$ of $f$
is defined by setting
$\varrho_{p(\cdot)}(f):=\int_\rn|f(x)|^{p(x)}\,dx$ and the
\emph{Luxemburg quasi-norm}
\begin{equation*}
\|f\|_{\vlp}:=\inf\lf\{\lz\in(0,\fz):\ \varrho_{p(\cdot)}(f/\lz)\le1\r\}.
\end{equation*}
Then the \emph{variable exponent Lebesgue space} $\vlp$ is defined to be the
set of all measurable functions $f$ such that $\varrho_{p(\cdot)}(f)<\fz$
equipped with the quasi-norm $\|f\|_{\vlp}$. For more properties on the variable
exponent Lebesgue spaces, we refer the reader to \cite{cfbook,{dhr11}}.

\begin{remark}\label{r-vlp}
 Let $p(\cdot)\in\cp(\rn)$.

(i) If $p_-\in[1,\fz)$, then $L^{p(\cdot)}(\rn)$
is a Banach space (see \cite[Theorem 3.2.7]{dhr11}).
In particular, for all $\lz\in\cc$ and $f\in\vlp$,
$\|\lz f\|_{\vlp}=|\lz|\|f\|_{\vlp}$ and, for all $f,\ g\in\vlp$,
$$\|f+g\|_{\vlp}\le \|f\|_{\vlp}+\|g\|_{\vlp}.$$

(ii) For any non-trivial function $f\in \vlp$, it holds true that
$\varrho(f/\|f\|_{\vlp})=1$; see, for example, \cite[Proposition 2.21]{cfbook}.

(iii) If $\int_\rn[|f(x)|/\delta]^{p(x)}\,dx\le c$ for some $\delta\in(0,\fz)$
and some positive constant $c$ independent of $\delta$, then it is easy to see that
$\|f\|_{\vlp}\le C\delta$, where $C$ is a positive constant independent of $\delta$,
but depending on $p_-$ (or $p_+$) and $c$.
\end{remark}

Recall that a measurable function $g\in\cp(\rn)$ is said to be
\emph{locally {\rm log}-H\"older continuous},
denoted by $g\in  C_{\rm loc}^{\log}(\rn)$,
if there exists a positive constant $C_{\log}(g)$ such that, for all $x,\ y\in\rn$,
\begin{equation*}
|g(x)-g(y)|\le \frac{C_{\log}(g)}{\log(e+1/|x-y|)},
\end{equation*}
and $g$ is said to satisfy the
\emph{globally {\rm log}-H\"older continuous condition},
denoted by $g\in  C^{\log}(\rn)$,
if $g\in  C_{\rm loc}^{\log}(\rn)$ and there exist a positive constant
$C_\fz$ and a constant $g_\fz\in\rr$
such that, for all $x\in\rn$,
\begin{equation*}
|g(x)-g_\fz|\le \frac{C_\fz}{\log(e+|x|)}.
\end{equation*}

\begin{remark}
Let $n=1$ and, for all $x\in\rr$,
$$p(x):=\max\lf\{1-e^{3-|x|},\min\lf(6/5,\max\lf\{1/2,3/2-x^2\r\}\r)\r\}.$$
Then $p(\cdot)\in C^{\log}(\rr)$; see \cite[Example 1.3]{ns12}.
With a slight modification,
another example was obtained in \cite[Example 2.20]{yyz14}
as follows. For all $x\in\rr$, let
$$ p(x):=\max\lf\{1-e^{3-|x|},\ \min\lf(6/5,\max(1/2,k|x|+1/2-k)\r)\r\},$$
where $k:=7/[10(\sqrt{3/10}-1)]$.
Then $p(\cdot)\in C^{\log}(\rr)$.
\end{remark}

For all $r\in(0,\fz)$, denote by $L_\loc^r(\rn)$ the \emph{set of all locally
$r$-integrable functions} on $\rn$ and, for any measurable set $E\st \rn$,
by $L^r(E)$ the \emph{set of all measurable functions $f$ such that}
$$\|f\|_{L^r(E)}:=\lf\{\int_E|f(x)|^r\,dx\r\}^{1/r}<\fz.$$
Recall that the \emph{Hardy-Littlewood maximal operator $\cm$} is defined by setting,
for all $f\in L_\loc^1(\rn)$ and $x\in\rn$,
$$\cm(f)(x):=\sup_{B\ni x}\frac1{|B|}\int_B|f(y)|\,dy,$$
where the supremum is taken over all balls $B$ of $\rn$ containing $x$.

\begin{remark}
Let $p(\cdot)\in C^{\log}(\rn)$ and $1<p_-\le p_+<\fz$. Then there exists a positive
constant $C$ such that, for all $f\in \vlp$,
$\|\cm(f)\|_{\vlp}\le C\|f\|_{\vlp}$;
see, for example, \cite[Theorem 4.3.8]{dhr11}.
\end{remark}

\subsection{Holomorphic functional calculi\label{s2.2}}
\hskip\parindent
Here, we first recall some notions of the bounded holomorphic functional calculus,
which were introduced by McIntosh \cite{mc86}, and then make two assumptions on $L$
required in this article.
For two normed linear spaces $\cx$ and $\cy$,
let $\cl(\cx,\cy)$ be the \emph{collection of continuous linear
operators from $\cx$ to $\cy$} and, for any $T\in \cl(\cx,\cy)$,
$\|T\|_{\cx\to\cy}$ its operator norm.

Let $v\in(0,\pi)$. Define the closed sector $S_v$ by
$S_v:=\{z\in\cc:\ |\arg z|\le v\}\cup\{0\}$ and denote by $S_v^0$ the
interior of $S_v$. Let $H(S_v^0)$ be the set of all holomorphic functions on
$S_v^0$,
$$H^\fz(S_v^0):=\lf\{b\in H(S_v^0):\ \|b\|_{\fz}:=\sup_{z\in S_v^0}|b(z)|<\fz\r\}$$
and
\begin{eqnarray*}
\Psi(S_v^0)&&:=\lf\{\psi\in H(S_v^0):\ \exists\ s,\, C\in(0,\fz)\
{\rm such\ that}\r.\\
&&\quad\quad\quad\quad\lf. |\psi(z)|\le C|z|^s(1+|z|^{2s})^{-1},\ \forall\ z\in S_v^0\r\}.
\end{eqnarray*}

Given $v\in(0,\pi)$, a closed operator $L\in\cl(L^2(\rn),L^2(\rn))$ is said to be of
\emph{type $v$} if $\sigma(L)\subset S_v$, where $\sigma(L)$ denotes the spectra of $L$,
and, for all $\gamma\in(v,\pi)$, there exists a positive constant $C$ such that,
for all $\lambda\notin S_\gamma$,
$$\|(L-\lz I)^{-1}\|_{L^2(\rn)\to L^2(\rn)}\le C|\lz|^{-1}.$$
Let $\theta\in(v,\gamma)$ and $\Sigma$ be the contour
$\{\xi=re^{\pm i\theta}: r\in[0,\fz)\}$ parameterized clockwise around $S_v$.
Then, for $\psi\in \Psi(S_v^0)$ and $L$ being of type $v$, the operator $\psi(L)$ is
defined by
$$\psi(L):=\frac1{2\pi i}\int_\Sigma(L-\lz I)^{-1}\psi(\lz)\,d\lz,$$
where the integral is absolutely convergent in $\cl(L^2(\rn),L^2(\rn))$ and, by
the Cauchy theorem, the above definition is independent
of the choices of $v$ and $\gamma$ satisfying $\theta\in(v,\gamma)$.
If $L$ is a one-to-one linear operator having dense range and $b\in H^\fz(S_\gamma^0)$,
then define an operator $b(L)$ by $b(L):=[\psi(L)]^{-1}(b\psi)(L)$,
where $\psi(z):=z(1+z)^{-2}$ for all $z\in S_{\gamma}^0$.
It was proved in \cite{mc86} that $b(L)$ is well defined on $L^2(\rn)$.
The operator $L$ is said to have a \emph{bounded $H^\fz$ functional calculus} on
$L^2(\rn)$ if, for all
$\gamma\in(v,\pi)$, there exists a positive constant $\wz C$ such that, for all
$b\in H^\fz(S_\gamma^0)$, $b(L)\in \cl(L^2(\rn),L^2(\rn))$ and
$$\|b(L)\|_{L^2(\rn)\to L^2(\rn)}\le \wz C\|b\|_\fz.$$

Let $L$ be a linear operator of type $v$ on $L^2(\rn)$ with $v\in(0,\frac \pi2)$.
Then it generates a bounded holomorphic semigroup $\{e^{-zL}\}_{z\in D_v}$, where
$D_v:=\{z\in\cc:\ 0\le|\arg(z)|<\frac\pi2-v\}$ and, for all $z\in\cc$,
$\arg(z)\in(-\pi,\pi]$ is the argument of $z$; see, for example, \cite[Theorem 1.45]{ou05}.

In this article, we make the following two assumptions on the operator $L$.

\setcounter{theorem}{0}
\renewcommand{\thetheorem}{(\Alph{theorem})}

\begin{assumption}\label{as-a}
Assume that, for each $t\in(0,\fz)$, the distribution kernel
$p_t$ of $e^{-tL}$ belongs to $L^\fz(\rn\times\rn)$ and satisfies that, for all $x,\ y\in\rn$,
\begin{equation}\label{assump1}
|p_t(x,y)|\le t^{-\frac nm}g\lf(\frac{|x-y|}{t^{\frac1m}}\r),
\end{equation}
where $m$ is a positive constant and $g$ is a positive, bounded and decreasing function
satisfying that, for some $\vez\in(0,\fz)$,
\begin{equation}\label{assump2}
\lim_{r\to\fz}r^{n+\vez}g(r)=0.
\end{equation}
\end{assumption}

\begin{assumption}\label{as-b} Assume that the operator $L$ is one-to-one, has dense range in
$L^2(\rn)$ and a bounded $H^\fz$ functional calculus on $L^2(\rn)$.
\end{assumption}

\setcounter{theorem}{3}
\renewcommand{\thetheorem}{\arabic{section}.\arabic{theorem}}

\begin{remark}
(i) If $\{e^{-tL}\}_{t\ge0}$ is a bounded analytic semigroup on $L^2(\rn)$ whose
kernels $\{p_t\}_{t\ge0}$ satisfy \eqref{assump1} and \eqref{assump2}, then, for
any $k\in\nn$, there exists a positive constant $C_{(k)}$, depending on $k$, such that, for all $t\in(0,\fz)$
and almost every $x,\ y\in\rn$,
\begin{equation}\label{partial1}
\lf|t^k\frac{\partial^kp_t(x,y)}{\partial t^k}\r|\le \frac {C_{(k)}}{t^{n/m}}
g_{k}\lf(\frac{|x-y|}{t^{1/m}}\r).
\end{equation}
Here, it should be pointed out that, for all $k\in\nn$, the function $g_{k}$ may
depend on $k$ but always satisfies \eqref{assump2};
see \cite[Theorem 6.17]{ou05} and \cite{cdu00}.

(ii) Let $v\in(0,\pi)$. Then $L$ has a bounded $H^\fz$ functional calculus on $L^2(\rn)$ if and only if,
for any $\gamma\in(v,\pi)$ and nonzero function $\psi\in \Psi(S_\gamma^0)$,
$L$ satisfies the following square function estimate:
there exists a positive constant
$C$ such that, for all $f\in L^2(\rn)$,
\begin{equation*}
C^{-1}\|f\|_{L^2(\rn)}\le\lf\{\int_0^\fz\|\psi_t(L)f\|_{L^2(\rn)}^2\,
\frac{dt}t\r\}^{1/2}
\le C\|f\|_{L^2(\rn)},
\end{equation*}
where $\psi_t(\xi):=\psi(t\xi)$ for all $t\in(0,\fz)$ and $\xi\in\rn$;
see \cite{mc86}.
\end{remark}

\subsection{An acting class of semigroups $\{e^{-tL}\}_{t\ge0}$}
\hskip\parindent
For all $\beta\in(0,\fz)$, let $\cm_{\beta}(\rn)$ be the
\emph{set of all functions $f\in L^2_{\loc}(\rn)$ satisfying}
$$\|f\|_{\cm_\beta(\rn)}:=\lf\{\int_\rn\frac{|f(x)|^2}
{1+|x|^{n+\beta}}\,dx\r\}^{1/2}<\fz.$$
We point out that the space $\cm_\beta(\rn)$ was introduced by Duong and Yan in
\cite{dy05cpam} and it is a Banach space under the norm
$\|\cdot\|_{\cm_\beta(\rn)}$. For any given operator $L$ satisfying
Assumptions \ref{as-a} and \ref{as-b}, let
\begin{equation}\label{2.3x}
\theta(L)
:=\sup\{\vez\in(0,\fz):\ (\ref{assump1})\ {\rm and}\ (\ref{assump2}){\ \rm hold\ true}\}
\end{equation}
and
\begin{eqnarray*}
\cm(\rn):=
\lf\{
\begin{array}{ll}
\cm_{\theta(L)}(\rn),\quad &{\rm if}\ \theta(L)<\fz,\\
\bigcup_{\beta\in(0,\fz)}\cm_\beta(\rn),\quad &{\rm if}\ \theta(L)=\fz.
\end{array}\r.
\end{eqnarray*}

Let $s\in\zz_+$. For any $f\in\cm(\rn)$ and $(x,t)\in\urn:=\rn\times(0,\fz)$, let
\begin{equation}\label{operator-pq}
P_{s,t}f(x):=f(x)-(I-e^{-tL})^{s+1}f(x)\quad{\rm and}\quad
Q_{s,t}f(x):=(tL)^{s+1}e^{-tL}f(x),
\end{equation}
and, particularly, let
\begin{equation}\label{operator-pqx}
P_tf(x):=P_{0,t}f(x)=e^{-tL}f(x)\quad{\rm and}\quad
Q_tf(x):=Q_{0,t}f(x)=tLe^{-tL}f(x).
\end{equation}
Here, we point out that these operators in \eqref{operator-pq} were
introduced by Blunck and Kunstmann \cite{bk04} and Holfmann and Martell \cite{hm03}.

\begin{remark}\label{r-operator}
(i) For all $f\in\cm(\rn)$, the operators $P_{s,t}f$ and $Q_{s,t}f$ are well defined. Moreover, the kernels $p_{s,t}$ of $P_{s,t}$ and $q_{s,t}$ of $Q_{s,t}$ satisfy that there exists a positive constant $C$ such that, for all $t\in(0,\fz)$ and $x,\ y\in\rn$,
\begin{equation}\label{operator-k}
|p_{s,t^m}(x,y)|+|q_{s,t^m}(x,y)|\le Ct^{-n}g\lf(\frac{|x-y|}{t}\r),
\end{equation}
where the function $g$ satisfies the conditions as in Assumption \ref{as-a}; see, for example, \cite{yan08}.

(ii)
A typical example of $L$ satisfying $\theta(L)=\fz$ is that the
kernels $\{p_t\}_{t\ge0}$ of $\{e^{-tL}\}_{t\ge0}$ have the pointwise
Gaussian upper bound, namely, there exists a positive
constant $C$ such that, for all $t\in(0,\fz)$ and $x,\ y\in\rn$,
$|p_t(x,y)|\le \frac C{t^{n/2}}e^{-\frac{|x-y|^2}{t}}$.
Obviously, if $\Delta:=\sum_{i=1}^n\frac{\partial^2}{\partial x_i}$ is the Laplacian
operator and $L=-\Delta$, then the heat kernels
have the pointwise Gaussian upper bound.
There are several other operators whose heat kernels have the pointwise
Gaussian upper bound; see, for example, \cite[p.\,4390, Remarks]{yan08}.

(iii) Let $s\in\zz_+$ and $p\in(1,\fz)$. Then, by (i) of this remark,
we easily conclude that there exists a positive constant $C$ such that,
for all $t\in(0,\fz)$ and $f\in L^p(\rn)$,
$$\|P_{s,t^m}(f)\|_{L^p(\rn)}\le C\|f\|_{L^p(\rn)}.$$
\end{remark}

\subsection{Definition of Hardy spaces $H_L^{p(\cdot)}(\rn)$}
\hskip\parindent
For all functions $f\in L^2(\rn)$, define the \emph{Lusin area function} $S_L(f)$
by setting, for all $x\in\rn$,
\begin{equation*}
S_L(f)(x):=\lf\{\int_{\Gamma(x)}\lf|Q_{t^m}f(y)\r|^2\,
\frac{dy\,dt}{t^{n+1}}\r\}^{1/2},
\end{equation*}
here and hereafter, for all $x\in\rn$,
$\Gamma(x):=\{(y,t)\in\rr_+^{n+1}:\ |y-x|<t\}$ and $Q_t$ is defined as in
\eqref{operator-pqx}.
In \cite{admun}, Auscher et al. proved that, for any $p\in(1,\fz)$,
there exists a positive
constant $C_{(p)}$, depending on $p$, such that, for all $f\in L^p(\rn)$,
\begin{equation}\label{SL-bounded}
C_{(p)}^{-1}\|f\|_{L^p(\rn)}\le \|S_L(f)\|_{L^p(\rn)}\le C_{(p)}\|f\|_{L^p(\rn)};
\end{equation}
see also Duong and McIntosh \cite{dm99} and Yan \cite{yan04}.

We now introduce the variable exponent Hardy spaces associated with operators.

\begin{definition}\label{d-hardys}
Let $L$ be an operator satisfying Assumptions \ref{as-a} and \ref{as-b}, and
$p(\cdot)\in C^{\log}(\rn)$ satisfy $p_+\in(0,1]$ .
A function $f\in L^2(\rn)$ is said to be in $\wz H_L^{p(\cdot)}(\rn)$ if
$S_L(f)\in L^{p(\cdot)}(\rn)$; moreover, define
$$\|f\|_{H_L^{p(\cdot)}(\rn)}
:=\|S_L(f)\|_{L^{p(\cdot)}(\rn)}
:=\inf\lf\{\lz\in(0,\fz):\ \int_\rn\lf[\frac{S_L(f)(x)}{\lz}\r]^{p(x)}\,dx\le1\r\}.$$
Then the \emph{variable Hardy space associated with operator} $L$,
denoted by $H_L^{p(\cdot)}(\rn)$,
is defined to be the completion of $\wz H_L^{p(\cdot)}(\rn)$ in the quasi-norm
$\|\cdot\|_{\hlp}$.
\end{definition}

\begin{remark}
(i) By the theorem of completion of Yosida \cite[p.\,65]{Yo78},
we find that $\wz H_L^{p(\cdot)}(\rn)$ is dense in $H_L^{p(\cdot)}(\rn)$,
namely, for any $f\in H_L^{p(\cdot)}(\rn)$, there exists a Cauchy sequence
$\{f_k\}_{k\in\nn}$ in $\wz H_L^{p(\cdot)}(\rn)$ such that
$\lim_{k\to\fz}\|f_k-f\|_{H_L^{p(\cdot)}(\rn)}=0$. Moreover,
if $\{f_k\}_{k\in\nn}$ is a Cauchy sequence in $\wz H_L^{p(\cdot)}(\rn)$, then
there exists an unique $f\in H_L^{p(\cdot)}(\rn)$ such that
$\lim_{k\to\fz}\|f_k-f\|_{H_L^{p(\cdot)}(\rn)}=0$.
Moreover, $L^2(\rn)\cap H_L^{p(\cdot)}(\rn)$ is dense in $H_L^{p(\cdot)}(\rn)$.

(ii) We point out that smooth functions with compact supports do not necessarily
belong to $H_L^{p(\cdot)}(\rn)$; see \cite{yan08} and also Remark \ref{r-dual} below
for more details.

(iii) Observe that, when $p(\cdot)\equiv p\in(0,\fz)$, $\vlp=L^p(\rn)$.
If $p(\cdot)\equiv 1$, then $\hlp=H_L^1(\rn)$, which was introduced by Auscher et al.
\cite{admun}; see also Duong and Yan \cite{dy05}.
If $p(\cdot)\equiv p\in (\frac n{n+\theta(L)},1)$, then
the space $\hlp$ is just the space $H_L^p(\rn)$ introduced by Yan \cite{yan08}.

(iv) Different from the space $H_L^p(\rn)$ which is just $L^p(\rn)$
when $p\in (1,\fz)$ (see, for example, \cite[p.\,4400]{yan08}),
since it is still unclear whether \eqref{SL-bounded} holds true or not with
$L^p(\rn)$ replaced by $\vlp$ when $p_+\in (1,\fz)$,  it is also
unclear whether $H^{p(\cdot)}_L(\rn)$ and $\vlp$ (or $\vhs$) coincide or not.
We will not push this issue in this article due to its length.
\end{remark}

We end this section by comparing the variable exponent Hardy spaces associated with
operators in this article with the Musielak-Orlicz-Hardy spaces
associated with operators satisfying reinforced off-diagonal estimates
in \cite{bckyy}. Indeed, in general, these two scales of Hardy-type spaces
do not cover each other.

\begin{remark}
Let $\vz:\ \rn\times[0,\fz)\to[0,\fz)$ be a growth function in \cite{ky14} and $L$
an operator satisfying reinforced off-diagonal estimates in \cite{bckyy}.
Then Bui et al. \cite{bckyy} introduced the Musielak-Orlicz-Hardy space
associated with operator $L$ via the Lusin area function,
denoted by $H_{\vz,L}(\rn)$.
Recall that the Musielak-Orlicz space $L^{\vz}(\rn)$ is defined to be the set
of all measurable functions $f$ on $\rn$ such that
$$\|f\|_{L^{\vz}(\rn)}:=\inf\lf\{\lz\in(0,\fz):\
\int_\rn\vz(x,|f(x)|/\lz)\,dx\le1\r\}<\fz.$$
Observe that, if
\begin{equation}\label{vz}
\vz(x,t):=t^{p(x)}\quad {\rm for\ all}\quad x\in\rn\quad{\rm and}\quad t\in[0,\fz),
\end{equation}
then $L^{\vz}(\rn)=L^{p(\cdot)}(\rn)$. However, a general
Musielak-Orlicz function $\vz$ satisfying all the assumptions
in \cite{ky14} (and hence \cite{bckyy}) may not have the form
as in \eqref{vz} (see \cite{ky14}). On the other hand, it was proved in
\cite[Remark 2.23(iii)]{yyz14} that there exists a variable exponent function
$p(\cdot)\in C^{\log}(\rn)$,
but $t^{p(\cdot)}$ is not a uniformly Muckenhoupt weight,
which was required in \cite{ky14} (and hence \cite{bckyy}).
Thus, Musielak-Orlicz-Hardy spaces associated with operators in \cite{bckyy}
and variable exponent Hardy spaces associated with operators in this article
do not cover each other.

Moreover, in Theorem \ref{t-5.2x} below, we show that, under some additional
assumptions on $L$, the spaces $H_L^{p(\cdot)}(\rn)$ coincide with
the variable exponent Hardy spaces $\vhs$ which can not cover and also can not
be covered by Musielak-Orlicz Hardy spaces in \cite{ky14} based on
the same reason as above.
\end{remark}

\section{Molecular characterizations of $H_L^{p(\cdot)}(\rn)$\label{s3}}
\hskip\parindent
In this section, we aim to obtain the molecular characterizations of
$H_L^{p(\cdot)}(\rn)$. To this end, we first give out some properties of the tent
spaces with variable exponents including their atomic characterizations, which are then
applied to establish the molecular characterizations of $H_L^{p(\cdot)}(\rn)$
by using a project operator $\pi_L$ corresponding to $L$.

\subsection{Atomic characterizations of tent spaces $T_2^{p(\cdot)}(\urn)$}
\hskip\parindent
We begin with the definition of the tent space with variable exponent.

Let $p(\cdot)\in\cp(\rn)$.
For all measurable functions $g$
on $\rr_+^{n+1}$ and $x\in\rn$, define
$$\ct(g)(x)
:=\lf\{\int_{\Gamma(x)}|g(y,t)|^2\,
\frac{dy\,dt}{t^{n+1}}\r\}^{1/2}$$
and
$$\mathcal{C}_{p(\cdot)}(g)(x):=\sup_{Q\ni x}\frac{|Q|^{1/2}}{\|\chi_Q\|_{\vlp}}
\lf\{\int_{\wh Q}|g(y,t)|^2\,\frac{dydt}{t}\r\}^{1/2},$$
where the supremum is taken over all cubes $Q$ of $\rn$ containing $x$
and $\wh Q$ denotes the tent over $Q$, namely, $\wh Q:=\{(y,t)\in \urn:\ B(y,t)\st Q\}$.

\begin{definition}
Let $p(\cdot)\in \cp(\rn)$.

(i) Let $q\in(0,\fz)$. Then the \emph{tent space} $T_2^q(\rr_+^{n+1})$
is defined to be the set of all measurable functions $g$ on $\urn$ such that
$\|g\|_{T_2^q(\rr_+^{n+1})}:=\|\ct(g)\|_{L^q(\rn)}<\fz.$

(ii) The \emph{tent space with variable exponent}
$T_2^{p(\cdot)}(\urn)$ is defined to be the set of all
measurable functions $g$ on $\rr_+^{n+1}$ such that
$\|g\|_{T^{p(\cdot)}_2(\rr_+^{n+1})}
:=\|\ct(g)\|_{\vlp}<\fz$.

(iii) The \emph{space} $T_{2,\fz}^{p(\cdot)}(\urn)$ is defined
to be the set of all measurable functions $g$ on $\rr_+^{n+1}$ such that
$\|g\|_{T_{2,\fz}^{p(\cdot)}(\urn)}:=\|\mathcal{C}_{p(\cdot)}(g)\|_{L^\fz(\rn)}<\fz.$
\end{definition}

\begin{remark}\label{r-atom}
(i) We point out that the spaces $T_2^q(\rr_+^{n+1})$
and $T^{p(\cdot)}_2(\rr_+^{n+1})$ were introduced in \cite{cms85}
and \cite{zyl14}, respectively. Moreover,
if $p(\cdot)\equiv q\in(0,\fz)$, then
$T^{p(\cdot)}_2(\rr_+^{n+1})=T_2^q(\rr_+^{n+1})$.

(ii) If $g\in T_2^2(\urn)$, then we easily see that
$\|g\|_{T_2^2(\urn)}=\{\int_{\urn}|g(x,t)|^2\,\frac{dxdt}t\}^{\frac12}$.
\end{remark}

Let $q\in(1,\fz)$ and $p(\cdot)\in\cp(\rn)$.
Recall that a measurable function $a$ on $\rr_+^{n+1}$ is called a
$(p(\cdot),q)$-\emph{atom} if $a$ satisfies

(i)$\supp a\subset\wh Q$ for some cube $Q\subset\rn$;

(ii)$\|a\|_{T_2^q(\rr_+^{n+1})}\le |Q|^{1/q}\|\chi_Q\|_{\vlp}^{-1}$.

Furthermore, if $a$ is a $(p(\cdot),q)$-atom for all
$q\in(1,\fz)$, then $a$ is call a $(p(\cdot),\fz)$-\emph{atom}. We point
out that the $(p(\cdot),\fz)$-atom was introduced in \cite{zyl14}.

For any $p(\cdot)\in\cp(\rn)$, $\{\lz_j\}_{j\in\nn}\st\cc$
and cubes $\{Q_j\}_{j\in\nn}$ of $\rn$, let
\begin{equation}\label{3.1y}
\ca\lf(\{\lz_j\}_{j\in\nn},\{Q_j\}_{j\in\nn}\r)
:=\lf\|\lf\{\sum_{j\in\nn}\lf[\frac{|\lz_j|\chi_{Q_j}}{\|Q_j\|_{\vlp}}
\r]^{\underline{p}}\r\}^{\frac 1{\underline{p}}}\r\|_{\vlp},
\end{equation}
here and hereafter, we let
\begin{equation}\label{3.1x}
\underline{p}:=\min\{1,p_-\}
\end{equation}
with $p_-$ as in \eqref{2.1x}.

The following atomic decomposition of $T_2^{p(\cdot)}(\urn)$ was proved in
\cite[Theorem 2.16]{zyl14}.
\begin{lemma}\label{l-tent}
Let $p(\cdot)\in C^{\log}(\rn)$. Then, for all
$f\in T^{p(\cdot)}_2(\urn)$, there exist $\{\lz_j\}_{j\in\nn}\subset\cc$
and a sequence $\{a_j\}_{j\in\nn}$ of $(p(\cdot),\fz)$-atoms such that,
for almost every $(x,t)\in\urn$,
\begin{equation}\label{tent2}
f(x,t)=\sum_{j\in\nn}\lz_ja_j(x,t);
\end{equation}
moreover, the series in \eqref{tent2} converges absolutely for almost
all $(x,t)\in\urn$ and there exists a positive constant $C$ such that, for all
$f\in T^{p(\cdot)}_2(\urn)$,
\begin{equation}\label{tent-y}
\cb(\{\lz_ja_j\}_{j\in\nn}):=\ca(\{\lz_j\}_{j\in\nn},\{Q_j\}_{j\in\nn})
\le C\|f\|_{T^{p(\cdot)}_2(\urn)},
\end{equation}
where, for each $j\in\nn$, $Q_j$ denotes the cube such that $\supp a_j\st \wh Q_j$.
\end{lemma}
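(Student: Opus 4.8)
The plan is to follow the now-standard Coifman--Meyer--Stein decomposition scheme from \cite{cms85}, adapted to the variable exponent setting exactly as in the proof of \cite[Theorem 2.16]{zyl14}, since the statement we must prove \emph{is} that theorem restated as Lemma \ref{l-tent}. First I would set up the stopping-time construction: for $f\in T_2^{p(\cdot)}(\urn)$ and each $k\in\zz$, let $O_k:=\{x\in\rn:\ \ct(f)(x)>2^k\}$, which is open since $\ct(f)$ is lower semicontinuous, and let $F_k:=O_k^\complement$. Using the standard notion of ``points of $\gamma$-global density'' one forms $O_k^\ast:=\{x\in\rn:\ \cm(\chi_{O_k})(x)>1-\gamma\}$ for a fixed $\gamma\in(0,1)$ close to $1$, and then performs a Whitney decomposition of each $O_k^\ast$ into cubes $\{Q_{k,j}\}_{j}$ with bounded overlap and side length comparable to their distance from $(O_k^\ast)^\complement$. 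Setting $\Delta_{k,j}:=(\wh{Q_{k,j}})\cap(\wh{O_{k+1}^\ast})^\complement\cap(\wh{O_k^\ast}\setminus\wh{O_{k+1}^\ast})$ — more precisely the ``tent difference'' pieces $\wh{O_k^\ast}\setminus\wh{O_{k+1}^\ast}$ sliced by the Whitney cubes — one writes $f=\sum_{k\in\zz}\sum_j \lz_{k,j}a_{k,j}$ with $a_{k,j}:=\lz_{k,j}^{-1}f\chi_{\Delta_{k,j}}$ and the normalizing constants $\lz_{k,j}:=C\,2^k\,|Q_{k,j}|^{1/2}\,\|\chi_{Q_{k,j}}\|_{\vlp}\,|Q_{k,j}|^{-1/2}=C\,2^k\|\chi_{Q_{k,j}}\|_{\vlp}$, chosen so that condition (ii) in the definition of $(p(\cdot),q)$-atom holds. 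The key analytic input that each $a_{k,j}$ is (up to a harmless constant) a $(p(\cdot),q)$-atom \emph{for every} $q\in(1,\fz)$, hence a $(p(\cdot),\fz)$-atom, comes from the classical duality/size estimate $\|f\chi_{\Delta_{k,j}}\|_{T_2^q}\ls 2^k|Q_{k,j}|^{1/q}$, which is proved by testing against $T_2^{q'}$ functions and using that $\Delta_{k,j}$ lives over the ``good'' part of $Q_{k,j}$ where $\ct(f)\le 2^{k+1}$ on a large portion; this part is entirely $p(\cdot)$-independent and can be quoted from \cite{cms85}.

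Next I would verify the almost-everywhere convergence of $\sum_{k,j}\lz_{k,j}a_{k,j}$ to $f$. Since the supports $\{\Delta_{k,j}\}$ for fixed $k$ have bounded overlap (from the Whitney structure) and, summing over $k$, cover $\urn$ up to a null set — because $\bigcap_k O_k^\ast$ has measure zero as $\ct(f)\in\vlp$ forces $\ct(f)<\fz$ a.e. — the pointwise identity $f(x,t)=\sum_{k,j}\lz_{k,j}a_{k,j}(x,t)$ holds for a.e.\ $(x,t)\in\urn$, and the series converges absolutely there since at each point only finitely many (boundedly many in $j$, and a controlled range in $k$) terms are nonzero. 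This gives \eqref{tent2}.

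The remaining and genuinely $p(\cdot)$-sensitive step is the norm estimate \eqref{tent-y}, i.e.\ bounding $\ca(\{\lz_{k,j}\}, \{Q_{k,j}\})=\|\{\sum_{k,j}[|\lz_{k,j}|\chi_{Q_{k,j}}/\|\chi_{Q_{k,j}}\|_{\vlp}]^{\underline p}\}^{1/\underline p}\|_{\vlp}$ by $C\|f\|_{T_2^{p(\cdot)}(\urn)}$. Plugging in $|\lz_{k,j}|/\|\chi_{Q_{k,j}}\|_{\vlp}\sim 2^k$, the inner sum becomes $\{\sum_{k,j}(2^k\chi_{Q_{k,j}})^{\underline p}\}^{1/\underline p}$. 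Using the bounded overlap of $\{Q_{k,j}\}_j$ for each fixed $k$ together with $Q_{k,j}\st O_k^\ast$, one controls $\sum_j(2^k\chi_{Q_{k,j}})^{\underline p}\ls 2^{k\underline p}\chi_{O_k^\ast}$, whence the whole inner expression is $\ls\{\sum_k 2^{k\underline p}\chi_{O_k^\ast}\}^{1/\underline p}$; since the sets $O_k^\ast$ are nested decreasing in $k$, this is pointwise $\sim\sup_k 2^k\chi_{O_k^\ast}\sim\cm(\chi_{\{\ct(f)>?\}})$-type quantity, and in fact $\ls \cm\big((\ct(f))^{\underline p}\big)^{1/\underline p}$ up to the choice of $\gamma$. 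Therefore $\ca(\{\lz_{k,j}\},\{Q_{k,j}\})\ls\big\|\cm\big((\ct f)^{\underline p}\big)^{1/\underline p}\big\|_{\vlp}$. The main obstacle — and the place where $p(\cdot)\in C^{\log}(\rn)$ is essential — is to bound this by $C\|\ct(f)\|_{\vlp}=C\|f\|_{T_2^{p(\cdot)}(\urn)}$: one applies the boundedness of the Hardy--Littlewood maximal operator on $L^{p(\cdot)/\underline p}(\rn)$, which holds because $p(\cdot)/\underline p\in C^{\log}(\rn)$ has lower index $p_-/\underline p\ge 1$, and in the borderline case $p_-\le 1$ (so $\underline p=p_-$) one needs the strict inequality $(p(\cdot)/\underline p)_->1$ or, when $p_-\le 1$ with $\underline p=p_-$ giving index exactly $1$, a vector-valued/weighted refinement; concretely this is handled by the variable-exponent maximal inequality as quoted in the remark after \cite[Theorem 4.3.8]{dhr11}, together with Remark \ref{r-vlp}. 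Packaging these estimates yields \eqref{tent-y} and completes the proof; the whole argument is that of \cite[Theorem 2.16]{zyl14}, to which we refer for the routine details.
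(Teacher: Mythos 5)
The paper does not prove Lemma~\ref{l-tent} itself: it quotes it directly as \cite[Theorem 2.16]{zyl14}, so there is no in-paper proof to compare against. Your sketch follows the same route that \cite{zyl14} (and, before it, Coifman--Meyer--Stein \cite{cms85}) takes: stopping-time sets $O_k=\{\ct f>2^k\}$, the dilated sets $O_k^\ast=\{\cm(\chi_{O_k})>1-\gamma\}$, a Whitney decomposition of $O_k^\ast$, and atoms supported on the tent-difference slices $\wh{O_k^\ast}\setminus\wh{O_{k+1}^\ast}$ cut by the Whitney cubes, with normalization $\lz_{k,j}\sim 2^k\|\chi_{Q_{k,j}}\|_{\vlp}$; and the a.e.\ absolute convergence comes from bounded overlap of the Whitney cubes together with $|\bigcap_k O_k^\ast|=0$. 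All of that is correct and is the intended argument.

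The one place your write-up is not airtight is the last step of \eqref{tent-y}. After reducing $\ca(\{\lz_{k,j}\},\{Q_{k,j}\})$ to $\big\|\big(\sum_k 2^{k\underline p}\chi_{O_k^\ast}\big)^{1/\underline p}\big\|_{\vlp}$, you want a pointwise bound of the form $\sup_k 2^k\chi_{O_k^\ast}\ls[\cm((\ct f)^r)]^{1/r}$; this indeed holds for \emph{every} $r\in(0,\fz)$, by the definition of $O_k^\ast$. But you then take $r=\underline p$ and invoke the boundedness of $\cm$ on $L^{p(\cdot)/\underline p}(\rn)$, which requires the essential infimum of $p(\cdot)/\underline p$ to be strictly greater than $1$. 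In the borderline case $p_-\le 1$ one has $\underline p=p_-$, so $(p(\cdot)/\underline p)_-=1$ and the scalar maximal inequality fails; you flag this but leave it unresolved. The fix is simply to choose an auxiliary exponent $r\in(0,p_-)$ strictly smaller than $p_-$, so that $(p(\cdot)/r)_- = p_-/r>1$; then $\cm$ is bounded on $L^{p(\cdot)/r}(\rn)$ by \cite[Theorem 4.3.8]{dhr11}, and $\|[\cm((\ct f)^r)]^{1/r}\|_{\vlp}=\|\cm((\ct f)^r)\|_{L^{p(\cdot)/r}(\rn)}^{1/r}\ls\|(\ct f)^r\|_{L^{p(\cdot)/r}(\rn)}^{1/r}=\|\ct f\|_{\vlp}$. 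With that substitution the estimate closes without any vector-valued refinement, and the proof matches the cited one.
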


By Lemma \ref{l-tent}, we have the following conclusion.

\begin{corollary}\label{c-tent1}
Let $p(\cdot)\in C^{\log}(\rn)$.
Assume that $f\in T_2^{p(\cdot)}(\urn)$, then the decomposition
\eqref{tent2} also holds true in $T_2^{p(\cdot)}(\urn)$.
\end{corollary}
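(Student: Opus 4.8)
The plan is to deduce this corollary from Lemma \ref{l-tent} by upgrading the almost-everywhere convergence of the series $\sum_{j\in\nn}\lz_ja_j$ to convergence in the quasi-norm of $T_2^{p(\cdot)}(\urn)$. First I would fix $f\in T_2^{p(\cdot)}(\urn)$ and apply Lemma \ref{l-tent} to obtain $\{\lz_j\}_{j\in\nn}\st\cc$, $(p(\cdot),\fz)$-atoms $\{a_j\}_{j\in\nn}$ supported in tents $\{\wh Q_j\}_{j\in\nn}$, and the estimate $\ca(\{\lz_j\}_{j\in\nn},\{Q_j\}_{j\in\nn})\le C\|f\|_{T_2^{p(\cdot)}(\urn)}$. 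For $N\in\nn$ set $f_N:=\sum_{j=1}^N\lz_ja_j$, so that $f-f_N=\sum_{j=N+1}^\fz\lz_ja_j$ pointwise almost everywhere on $\urn$. The goal is to show $\|f-f_N\|_{T_2^{p(\cdot)}(\urn)}=\|\ct(f-f_N)\|_{\vlp}\to0$ as $N\to\fz$.

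The key step is the sublinearity-type estimate for the operator $\ct$ on atoms in the variable exponent setting. Since each $a_j$ is a $(p(\cdot),q)$-atom supported in $\wh Q_j$ (for every $q\in(1,\fz)$) and $\ct$ is bounded on $L^2(\urn)$ (indeed $\|\ct(g)\|_{L^2(\rn)}=\|g\|_{T_2^2(\urn)}$ by Remark \ref{r-atom}(ii)), one has $\supp\ct(a_j)\st$ a fixed dilate of $Q_j$ and $\|\ct(a_j)\|_{L^2(\rn)}\ls|Q_j|^{1/2}\|\chi_{Q_j}\|_{\vlp}^{-1}$, which after normalization makes $\ct(a_j)$ behave like a multiple of an $L^{p(\cdot)}$-"atom". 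More directly, from the pointwise bound $\ct\lf(\sum_{j>N}\lz_ja_j\r)\le\sum_{j>N}|\lz_j|\ct(a_j)$ together with the support and size controls on each $\ct(a_j)$, one applies the standard variable-exponent estimate (of the type used in the proof of \cite[Theorem 2.16]{zyl14}, which bounds $\|\{\sum_j[|\lz_j|\ct(a_j)]^{\underline p}\}^{1/\underline p}\|_{\vlp}$ by $C\,\ca(\{\lz_j\}_j,\{Q_j\}_j)$) to the tail, obtaining
\begin{equation*}
\|f-f_N\|_{T_2^{p(\cdot)}(\urn)}\le C\lf\|\lf\{\sum_{j=N+1}^\fz\lf[\frac{|\lz_j|\chi_{Q_j}}{\|\chi_{Q_j}\|_{\vlp}}\r]^{\underline p}\r\}^{1/\underline p}\r\|_{\vlp}.
\end{equation*}
Because the full series $\ca(\{\lz_j\}_{j\in\nn},\{Q_j\}_{j\in\nn})$ is finite, the right-hand side is the $\vlp$-quasi-norm of the tail of a convergent "series" and tends to $0$ as $N\to\fz$; here one uses the dominated convergence property of the modular $\varrho_{p(\cdot)}$ together with Remark \ref{r-vlp}(iii) to pass from modular smallness of the tail to quasi-norm smallness.

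I expect the main obstacle to be the tail-convergence argument in $\vlp$: unlike in a normed space, $\|\cdot\|_{\vlp}$ is only a quasi-norm when $p_-<1$, so one cannot simply estimate the tail norm by the sum of the norms of the remaining terms. The device $\underline p=\min\{1,p_-\}$ and the $\ell^{\underline p}$-superposition in \eqref{3.1y} are precisely what circumvents this: $\|\cdot\|_{\vlp}^{\underline p}$ (more precisely, $\|\{\sum_j|g_j|^{\underline p}\}^{1/\underline p}\|_{\vlp}^{\underline p}$) is subadditive in the family $\{g_j\}_j$, so the tail from $j>N$ is controlled and goes to $0$ as $N\to\fz$ by monotone/dominated convergence for the modular. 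One must also be a little careful that the dilated cubes containing $\supp\ct(a_j)$ still satisfy the comparison $\|\chi_{cQ_j}\|_{\vlp}\sim\|\chi_{Q_j}\|_{\vlp}$, which follows from $p(\cdot)\in C^{\log}(\rn)$ (this is implicit in Lemma \ref{l-tent} and its proof). Modulo these points, which are routine within the framework of \cite{zyl14}, the corollary follows, and in particular the identity \eqref{tent2} then holds in the $T_2^{p(\cdot)}(\urn)$ quasi-norm.
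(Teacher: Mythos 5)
Your proposal follows essentially the same route as the paper: apply Lemma \ref{l-tent}, bound $\ct(f-f_N)\le\sum_{j>N}|\lz_j|\ct(a_j)$, observe $\supp\ct(a_j)\subset Q_j$ with $\|\ct(a_j)\|_{L^q}=\|a_j\|_{T_2^q}\le|Q_j|^{1/q}\|\chi_{Q_j}\|_{\vlp}^{-1}$, invoke Lemma \ref{l-estimate} (Sawano's lemma) to pass to $\ca$ of the tail, and finish by dominated convergence in $\vlp$. The only cosmetic difference is that you allow a dilate of $Q_j$ to carry $\supp\ct(a_j)$, whereas $\supp a_j\subset\wh Q_j$ already gives $\supp\ct(a_j)\subset Q_j$; this does not change the argument.
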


To prove Corollary \ref{c-tent1}, we need the following useful lemma, which is just
\cite[Lemma 4.1]{Sa13}.

\begin{lemma}\label{l-estimate}
Let $p(\cdot)\in C^{\log}(\rn)$ and
$q\in[1,\fz)\cap(p_+,\fz)$ with $p_+$ as in \eqref{2.1x}.
Then there exists a positive constant
$C$ such that, for all sequences $\{Q_j\}_{j\in\nn}$ of cubes of $\rn$,
numbers $\{\lz_j\}_{j\in\nn}\st\cc$ and functions
$\{a_j\}_{j\in\nn}$ satisfying that,  for each $j\in\nn$,
$\supp a_j\st Q_j$ and $\|a_j\|_{L^q(\rn)}\le |Q_j|^{1/q}$,
\begin{equation*}
\lf\|\lf(\sum_{j=1}^\fz|\lz_ja_j|^{\underline{p}}\r)^\frac1{\underline{p}}\r\|_{\vlp}
\le C\lf\|\lf(\sum_{j=1}^\fz|\lz_j\chi_{Q_j}|^{\underline{p}}
\r)^\frac1{\underline{p}}\r\|_{\vlp},
\end{equation*}
where $\underline{p}$ is as in \eqref{3.1x}.
\end{lemma}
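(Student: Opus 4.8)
The plan is to derive the inequality from the duality of variable Lebesgue spaces, after passing to the exponent $p(\cdot)/\underline{p}$, which puts us in the Banach range. \emph{First}, I would raise the claimed estimate to the power $\underline{p}$. Since $\|g^{1/\underline{p}}\|_{\vlp}^{\underline{p}}=\|g\|_{L^{p(\cdot)/\underline{p}}(\rn)}$ for every measurable $g\ge0$, and since $\chi_{Q_j}^{\underline{p}}=\chi_{Q_j}$, it is equivalent to prove
\begin{equation*}
\lf\|\sum_{j\in\nn}\mu_j|a_j|^{\underline{p}}\r\|_{L^{P(\cdot)}(\rn)}
\le C\lf\|\sum_{j\in\nn}\mu_j\chi_{Q_j}\r\|_{L^{P(\cdot)}(\rn)},
\qquad \mu_j:=|\lz_j|^{\underline{p}},\quad P(\cdot):=\frac{p(\cdot)}{\underline{p}}.
\end{equation*}
The gain is that $P_-=p_-/\underline{p}\ge1$, so $L^{P(\cdot)}(\rn)$ is a Banach function space whose associate space is $L^{P'(\cdot)}(\rn)$, where $1/P'(\cdot):=1-1/P(\cdot)$; hence its norm is computed by the dual pairing (norm-conjugate formula), and it suffices to bound $\int_\rn\big(\sum_j\mu_j|a_j|^{\underline{p}}\big)h\,dx$ uniformly over all $h$ with $0\le h\in L^{P'(\cdot)}(\rn)$ and $\|h\|_{L^{P'(\cdot)}(\rn)}\le1$.

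\emph{Next}, I would treat each cube separately. Put $s:=(q/\underline{p})'=q/(q-\underline{p})\in(1,\fz)$. Since $\supp a_j\st Q_j$, applying H\"older's inequality on $Q_j$ (with exponents $q/\underline{p}$ and $s$) and then the hypothesis $\|a_j\|_{L^q(\rn)}\le|Q_j|^{1/q}$ gives, for \emph{every} $x\in Q_j$,
\begin{align*}
\int_{Q_j}|a_j(y)|^{\underline{p}}h(y)\,dy
&\le\|a_j\|_{L^q(Q_j)}^{\underline{p}}\lf(\int_{Q_j}h^s\,dy\r)^{1/s}
\le|Q_j|^{\underline{p}/q}\,|Q_j|^{1/s}\lf(\frac1{|Q_j|}\int_{Q_j}h^s\,dy\r)^{1/s}\\
&\le|Q_j|^{\underline{p}/q+1/s}\lf[\cm(h^s)(x)\r]^{1/s}
=|Q_j|\,\lf[\cm(h^s)(x)\r]^{1/s},
\end{align*}
using the algebraic identity $\underline{p}/q+1/s=1$. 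Taking the infimum over $x\in Q_j$ yields $\int_{Q_j}|a_j|^{\underline{p}}h\le\int_{Q_j}[\cm(h^s)]^{1/s}\,dx$. Summing in $j$ and invoking the generalized H\"older inequality in $L^{P(\cdot)}(\rn)$–$L^{P'(\cdot)}(\rn)$,
\begin{equation*}
\int_\rn\lf(\sum_j\mu_j|a_j|^{\underline{p}}\r)h\,dx
\le\int_\rn\lf(\sum_j\mu_j\chi_{Q_j}\r)[\cm(h^s)]^{1/s}\,dx
\le C\lf\|\sum_j\mu_j\chi_{Q_j}\r\|_{L^{P(\cdot)}(\rn)}\,\lf\|[\cm(h^s)]^{1/s}\r\|_{L^{P'(\cdot)}(\rn)}.
\end{equation*}
Since $\|[\cm(h^s)]^{1/s}\|_{L^{P'(\cdot)}(\rn)}=\|\cm(h^s)\|_{L^{P'(\cdot)/s}(\rn)}^{1/s}$ and $\|h^s\|_{L^{P'(\cdot)/s}(\rn)}^{1/s}=\|h\|_{L^{P'(\cdot)}(\rn)}\le1$, the argument closes as soon as one knows that $\cm$ is bounded on $L^{P'(\cdot)/s}(\rn)$.

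\emph{Finally}, I would verify exactly this maximal bound, which is where both hypotheses are spent and which I expect to be the delicate point. On the one hand, $\frac1{P'(\cdot)/s}=s\big(1-\underline{p}/p(\cdot)\big)$ is globally log-H\"older continuous, because $p(\cdot)\in C^{\log}(\rn)$ together with $0<p_-\le p_+<\fz$ forces $1/p(\cdot)\in C^{\log}(\rn)$. On the other hand, a short computation gives
$(P'(\cdot)/s)_-=(P'(\cdot))_-/s=\frac{p_+}{p_+-\underline{p}}\cdot\frac{q-\underline{p}}{q}=\frac{p_+(q-\underline{p})}{q(p_+-\underline{p})}$,
which is $>1$ \emph{precisely because $q>p_+$}; the same strict inequality also gives $(P'(\cdot))_->s$, so $h\in L^{P'(\cdot)}(\rn)\st L^s_{\loc}(\rn)$ and $\cm(h^s)$ is well defined. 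The subtlety is that when $p_-\le1$ (so $\underline{p}=p_-$ and $P_-=1$) the exponent $P'(\cdot)/s$ need not be bounded above, so one cannot use the boundedness of $\cm$ on $L^{r(\cdot)}(\rn)$ in the form "$1<r_-\le r_+<\fz$ and $r(\cdot)\in C^{\log}(\rn)$" cited elsewhere; instead one must use the sharper statement valid whenever $r_->1$ and $1/r(\cdot)\in C^{\log}(\rn)$, which does allow $r_+=\fz$. Everything else — the identification of the associate space of $L^{P(\cdot)}(\rn)$ for $P_-\ge1$, the norm-conjugate formula, and the generalized H\"older inequality — is standard for variable Lebesgue spaces (see \cite{cfbook,dhr11}), and no cancellation beyond the strict inequality $q>p_+$ is used.
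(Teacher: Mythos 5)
Your argument is correct. The paper itself gives no proof of this lemma --- it is quoted verbatim from Sawano \cite[Lemma 4.1]{Sa13} (a variant of Nakai--Sawano \cite[Lemma 4.11]{ns12}) --- and your duality proof (pass to $P(\cdot)=p(\cdot)/\underline{p}$, dualize against $L^{P'(\cdot)}(\rn)$, apply H\"older on each $Q_j$ with exponents $q/\underline{p}$ and $s$, and absorb the resulting $[\cm(h^s)]^{1/s}$ via the maximal theorem on $L^{P'(\cdot)/s}(\rn)$) is essentially the standard argument used there; in particular you correctly identify that $q>p_+$ is exactly what makes $(P'(\cdot)/s)_->1$, and that one must invoke the version of the maximal theorem requiring only $1/r(\cdot)\in C^{\log}(\rn)$ and $r_->1$, since $P'(\cdot)/s$ is unbounded whenever $p$ attains $p_-\le 1$.
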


\begin{proof}[Proof of Corollary \ref{c-tent1}]
Let $f\in T_2^{p(\cdot)}(\urn)$.
Then, by Lemma \ref{l-tent}, we may assume that $f=\sum_{j\in\nn}\lz_ja_j$
almost everywhere on $\urn$, where $\{\lz_j\}_{j\in\nn}\subset\cc$
and $\{a_j\}_{j\in\nn}$ is a sequence of $(p(\cdot),\fz)$-atoms such that, for each
$j\in\nn$, $\supp a_j\subset \wh Q_j$ with some cube $Q_j\subset\rn$, and
\begin{equation}\label{tent6}
\ca\lf(\{\lz_j\}_{j\in\nn},\{Q_j\}_{j\in\nn}\r)\ls\|f\|_{T_2^{p(\cdot)}(\urn)}.
\end{equation}
Let $q\in[1,\fz)\cap(p_+,\fz)$. Then, by the definition of $(p(\cdot),\fz)$-atoms,
we see that, for all $j\in\nn$,
$$\|\ct(a_j)\|_{L^q(\rn)}=\|a_j\|_{T_2^q(\urn)}\le \frac{|Q_j|^{1/q}}
{\|\chi_{Q_j}\|_{\vlp}}.$$
From this, Lemma \ref{l-estimate} and the fact that, for all $\theta\in(0,1]$ and
$\{\xi_j\}_{j\in\nn}\st\cc$,
\begin{equation}\label{simple-ineq}
\lf(\sum_{j\in\nn}|\xi_j|\r)^{\theta}\le \sum_{j\in\nn}|\xi_j|^\theta,
\end{equation}
we deduce that, for all $N\in\nn$,
\begin{eqnarray}\label{tent-x}
\lf\|\ct\lf(f-\sum_{j=1}^N\lz_ja_j\r)\r\|_{\vlp}
&&\le\lf\|\sum_{j=N+1}^\fz|\lz_j|\ct(a_j)\r\|_{\vlp}\\
&&\ls\lf\|\lf\{\sum_{j= N+1}^\fz\lf[\frac{|\lz_j|\chi_{Q_j}}
{\|\chi_{Q_j}\|_{\vlp}}\r]^{\underline{p}}\r\}^{\frac1{\underline{p}}}
\r\|_{\vlp}.\noz
\end{eqnarray}
This, combined with \eqref{tent6} and the dominated convergence theorem
(see \cite[Theorem 2.62]{cfbook}), implies that
\begin{eqnarray*}
\lim_{N\to\fz}\lf\|\ct\lf(f-\sum_{j=1}^N\lz_ja_j\r)\r\|_{\vlp}
\ls\lf\|\lim_{N\to\fz}\lf\{\sum_{j= N+1}^\fz\lf[\frac{|\lz_j|\chi_{Q_j}}
{\|\chi_{Q_j}\|_{\vlp}}\r]^{\underline{p}}\r\}^{\frac1{\underline{p}}}
\r\|_{\vlp}=0.
\end{eqnarray*}
Therefore, \eqref{tent2} holds true in $T_2^{p(\cdot)}(\urn)$,
which completes the proof of Corollary \ref{c-tent1}.
\end{proof}

\begin{remark}\label{r-3.7x}
It was proved in \cite[Proposition 3.1]{jy10} that, if $f\in T_2^q(\urn)$ with
$q\in(0,\fz)$, then the decomposition \eqref{tent2} also holds true in
$T_2^q(\urn)$.
\end{remark}

Using Corollary \ref{c-tent1} and an argument
similar to that used in the proof of \eqref{tent-x}, we obtain the following atomic
characterization of $T_2^{p(\cdot)}(\urn)$, the details being omitted.

\begin{corollary}\label{c-tent}
Let $p(\cdot)\in C^{\log}(\rn)$.
Then $f\in T^{p(\cdot)}_2(\urn)$ if and only if there exist $\{\lz_j\}_{j\in\nn}\subset\cc$
and a sequence $\{a_j\}_{j\in\nn}$ of $(p(\cdot),\fz)$-atoms such that, for almost every
$(x,t)\in\urn$,
$
f(x,t)=\sum_{j\in\nn}\lz_ja_j(x,t)
$
and
\begin{equation*}
\int_\rn\lf\{\sum_{j\in\nn}
\lf[\frac{\lz_j\chi_{Q_j}}{\|\chi_{Q_j}\|_{\vlp}}\r]^{\underline{p}}
\r\}^{\frac{p(x)}{\underline{p}}}\,dx<\fz,
\end{equation*}
where, for each $j$, $Q_j$ denotes the cube appearing in the support of $a_j$;
moreover, for all $f\in T_2^{p(\cdot)}(\urn)$,
$
\|f\|_{T_2^{p(\cdot)}(\urn)}\sim\ca(\{\lz_j\}_{j\in\nn},\{Q_j\}_{j\in\nn})
$
with the implicit positive constants independent of $f$.
\end{corollary}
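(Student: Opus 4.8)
The plan is to prove Corollary \ref{c-tent} by combining Lemma \ref{l-tent} (the atomic decomposition in $T_2^{p(\cdot)}(\urn)$) and Corollary \ref{c-tent1} (which upgrades this decomposition to convergence in the $T_2^{p(\cdot)}(\urn)$ quasi-norm) for the necessity, while the sufficiency is a direct estimate using Lemma \ref{l-estimate} exactly as in the proof of \eqref{tent-x}. The two directions and the norm equivalence are treated separately.

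First, for the \textbf{necessity}: assume $f\in T_2^{p(\cdot)}(\urn)$. Lemma \ref{l-tent} produces $\{\lz_j\}_{j\in\nn}\st\cc$ and $(p(\cdot),\fz)$-atoms $\{a_j\}_{j\in\nn}$ with $\supp a_j\st\wh Q_j$, such that $f=\sum_{j\in\nn}\lz_ja_j$ almost everywhere on $\urn$ and $\ca(\{\lz_j\}_{j\in\nn},\{Q_j\}_{j\in\nn})\ls\|f\|_{T_2^{p(\cdot)}(\urn)}$; Corollary \ref{c-tent1} gives that the series also converges in $T_2^{p(\cdot)}(\urn)$. It then remains only to observe that the condition $\int_\rn\{\sum_{j\in\nn}[\lz_j\chi_{Q_j}/\|\chi_{Q_j}\|_{\vlp}]^{\underline{p}}\}^{p(x)/\underline{p}}\,dx<\fz$ follows from $\ca(\{\lz_j\}_{j\in\nn},\{Q_j\}_{j\in\nn})<\fz$ together with Remark \ref{r-vlp}(ii), since for any nontrivial $g\in\vlp$ one has $\varrho_{p(\cdot)}(g/\|g\|_{\vlp})=1$; applying this with $g=\{\sum_j[|\lz_j|\chi_{Q_j}/\|\chi_{Q_j}\|_{\vlp}]^{\underline{p}}\}^{1/\underline{p}}$ and using that $\varrho_{p(\cdot)}(\lz g)<\fz$ for any fixed $\lz$ gives exactly the claimed finite integral.

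Second, for the \textbf{sufficiency}: suppose $f=\sum_{j\in\nn}\lz_ja_j$ almost everywhere with $(p(\cdot),\fz)$-atoms $a_j$ supported in $\wh Q_j$ and with the displayed integral finite. Fix $q\in[1,\fz)\cap(p_+,\fz)$. By the definition of a $(p(\cdot),\fz)$-atom, $\|\ct(a_j)\|_{L^q(\rn)}=\|a_j\|_{T_2^q(\urn)}\le|Q_j|^{1/q}\|\chi_{Q_j}\|_{\vlp}^{-1}$, so that $b_j:=\|\chi_{Q_j}\|_{\vlp}\ct(a_j)$ satisfies $\supp b_j\st Q_j$ and $\|b_j\|_{L^q(\rn)}\le|Q_j|^{1/q}$. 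Using the sublinearity of $\ct$ and \eqref{simple-ineq} with $\theta=\underline{p}$, then Lemma \ref{l-estimate} applied to the family $\{b_j\}_{j\in\nn}$ with coefficients $\{\lz_j/\|\chi_{Q_j}\|_{\vlp}\}_{j\in\nn}$, we obtain
\begin{equation*}
\|\ct(f)\|_{\vlp}\le\lf\|\lf(\sum_{j\in\nn}|\lz_j|^{\underline{p}}[\ct(a_j)]^{\underline{p}}\r)^{1/\underline{p}}\r\|_{\vlp}\ls\lf\|\lf\{\sum_{j\in\nn}\lf[\frac{|\lz_j|\chi_{Q_j}}{\|\chi_{Q_j}\|_{\vlp}}\r]^{\underline{p}}\r\}^{1/\underline{p}}\r\|_{\vlp}=\ca(\{\lz_j\}_{j\in\nn},\{Q_j\}_{j\in\nn}),
\end{equation*}
and the right-hand side is finite by Remark \ref{r-vlp}(iii) and the assumed finiteness of the integral; hence $f\in T_2^{p(\cdot)}(\urn)$. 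Combining this bound with the reverse inequality $\ca(\{\lz_j\}_{j\in\nn},\{Q_j\}_{j\in\nn})\ls\|f\|_{T_2^{p(\cdot)}(\urn)}$ from \eqref{tent-y} applied to the particular decomposition furnished by Lemma \ref{l-tent} (and noting the sufficiency estimate controls $\|f\|_{T_2^{p(\cdot)}(\urn)}$ by the $\ca$-quantity of \emph{any} admissible decomposition) yields the asserted equivalence $\|f\|_{T_2^{p(\cdot)}(\urn)}\sim\ca(\{\lz_j\}_{j\in\nn},\{Q_j\}_{j\in\nn})$.

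The only genuinely delicate point, and the one the paper flags by "the details being omitted," is bookkeeping the two quantifiers in the norm equivalence: the $\ls$ in sufficiency holds for every admissible decomposition, whereas the $\gs$ comes only from the specific decomposition of Lemma \ref{l-tent}; stating the equivalence correctly requires that the $\ca$-functional be interpreted as attached to the decomposition produced by Lemma \ref{l-tent} (or, equivalently, that one takes the infimum over all decompositions on one side). Everything else is a mechanical repetition of the estimate \eqref{tent-x} together with invocations of Remark \ref{r-vlp}, so no new idea beyond Lemma \ref{l-estimate} and Corollary \ref{c-tent1} is needed.
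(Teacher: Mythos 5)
Your proof is correct and follows exactly the route the paper indicates (Corollary \ref{c-tent1} for the existence direction, the estimate of \eqref{tent-x} via Lemma \ref{l-estimate} for the sufficiency and the norm bound); the observation about the modular condition being a restatement of $\ca<\fz$ and the quantifier bookkeeping in the equivalence are both accurate. Nothing is missing.
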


The following remark plays an important role in the proof of Theorem
\ref{t-dual}.

\begin{remark}\label{r-tent}
Let $p(\cdot)\in\cp(\rn)$ satisfy $p_+\in(0,1]$. Then, by \cite[Remark 4.4]{ns12},
we know that, for any $\{\lz_j\}_{j\in\nn}\subset\cc$
and cubes $\{Q_j\}_{j\in\nn}$ of $\rn$,
$$\sum_{j\in\nn}|\lz_j|\le
\ca(\{\lz_j\}_{j\in\nn},\{Q_j\}_{j\in\nn}).$$
\end{remark}

In what follows, let $T_{2,c}^{p(\cdot)}(\urn)$ and $T_{2,c}^{q}(\urn)$
with $q\in(0,\fz)$ be
the \emph{sets} of all functions, respectively, in $T_2^{p(\cdot)}(\urn)$
and $T_2^{q}(\urn)$ with compact supports.

\begin{proposition}\label{p-tent}
Let $p(\cdot)\in C^{\log}(\rn)$. Then
$T_{2,c}^{p(\cdot)}(\urn)\subset T_{2,c}^{2}(\urn)$ as sets.
\end{proposition}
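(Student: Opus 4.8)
The plan is to show that any $f\in T_{2,c}^{p(\cdot)}(\urn)$ automatically lies in $T_2^2(\urn)$, so that, together with the already assumed compact support, we obtain $f\in T_{2,c}^2(\urn)$. The starting point is the observation that, since $f$ has compact support in $\urn$, there exist a cube $Q_0\subset\rn$ and numbers $0<a<b<\fz$ such that $\supp f\subset Q_0\times[a,b]$. Consequently, for all $x\in\rn$, the truncated cone integral defining $\ct(f)(x)$ is supported in the bounded region where $t\in[a,b]$ and $y\in Q_0$ with $|y-x|<t\le b$; in particular $\ct(f)(x)=0$ whenever $x\notin \wz Q_0$, where $\wz Q_0$ is the cube concentric with $Q_0$ with side length $\ell(Q_0)+2b$. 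Thus $\ct(f)$ is supported in the fixed bounded set $\wz Q_0$.

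Next I would estimate $\|f\|_{T_2^2(\urn)}$ by $\|f\|_{T_2^{p(\cdot)}(\urn)}$ using the support information. By Remark \ref{r-atom}(ii), $\|f\|_{T_2^2(\urn)}^2=\int_{\urn}|f(x,t)|^2\,\frac{dx\,dt}{t}=\int_\rn[\ct(f)(x)]^2\,dx$, the last identity being the standard Fubini computation underlying the $T_2^2$ theory. Since $\ct(f)$ is supported in $\wz Q_0$, I apply Hölder's inequality for variable Lebesgue spaces (see \cite[Lemma 3.2.20]{dhr11} or the analogous statement in \cite{cfbook}): with the exponent pair $r(\cdot):=p(\cdot)/2$ (note $r_+\le 1/2\cdot p_+$, finite, since $p_+<\fz$ once we are in the $C^{\log}$ setting; if $p_+\le 1$ one instead just uses $p_+\le 2$ directly) and its conjugate $r'(\cdot)$, one gets
\[
\int_{\wz Q_0}[\ct(f)(x)]^2\,dx
\le 2\,\bigl\|[\ct(f)]^2\bigr\|_{L^{r(\cdot)}(\rn)}\,\|\chi_{\wz Q_0}\|_{L^{r'(\cdot)}(\rn)}
= 2\,\|\ct(f)\|_{\vlp}^2\,\|\chi_{\wz Q_0}\|_{L^{r'(\cdot)}(\rn)} .
\]
Because $\wz Q_0$ is a bounded set, $\|\chi_{\wz Q_0}\|_{L^{r'(\cdot)}(\rn)}<\fz$, so the right-hand side is finite. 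Hence $f\in T_2^2(\urn)$, and being also compactly supported, $f\in T_{2,c}^2(\urn)$.

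The routine points are the Fubini identity $\int_{\urn}|f|^2\frac{dx\,dt}{t}=\int_\rn[\ct(f)]^2\,dx$ and the finiteness of $\|\chi_{\wz Q_0}\|_{L^{r'(\cdot)}(\rn)}$, both of which follow from standard facts recalled in Section \ref{s2}. The only mild obstacle is bookkeeping the exponents in the variable-exponent Hölder inequality so that the conjugate exponent is well defined and the characteristic function of the bounded set $\wz Q_0$ has finite norm; this is immediate once one notes that $p(\cdot)\in C^{\log}(\rn)$ with $p_+\in(0,1]$ (or at least $p_+<\fz$) forces $p(\cdot)/2$ to have range bounded away from $0$ and $\fz$ on any bounded set, so that Remark \ref{r-vlp}(iii) or direct estimation yields $\|\chi_{\wz Q_0}\|_{L^{r'(\cdot)}(\rn)}<\fz$. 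With these observations, the inclusion $T_{2,c}^{p(\cdot)}(\urn)\subset T_{2,c}^2(\urn)$ follows, completing the proof.
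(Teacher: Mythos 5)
The key step in your argument is the H\"older inequality with exponent pair $r(\cdot)=p(\cdot)/2$ and its conjugate $r'(\cdot)$. This requires $r(\cdot)\ge 1$, i.e.\ $p(\cdot)\ge 2$, for the conjugate exponent to be defined. But in this paper the relevant range is $p_+\le 1$ (and in fact the proposition is used precisely there), so $r(\cdot)\le 1/2$ and there is no conjugate exponent. The parenthetical remark ``if $p_+\le 1$ one instead just uses $p_+\le 2$ directly'' does not supply a replacement argument: when $p(\cdot)<2$ the inclusion $L^{p(\cdot)}(\wz Q_0)\subset L^2(\wz Q_0)$ is simply false on a bounded set (the embedding runs the other way), so knowing $\ct(f)\in L^{p(\cdot)}$ with bounded support gives no control on $\int_{\wz Q_0}[\ct(f)]^2\,dx$. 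This is a genuine gap, not a bookkeeping issue.

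The paper avoids this by never trying to pass directly from $L^{p(\cdot)}$ to $L^2$. It instead shows $T_{2,c}^{p(\cdot)}(\urn)\subset T_{2,c}^{p_-}(\urn)$ by a short modular estimate: split $Q$ (the cube with $\supp\ct(f)\subset Q$) according to whether $\ct(f)<1$ or $\ct(f)\ge 1$; on the first part $[\ct(f)]^{p_-}\le 1$ and one integrates to get $|Q|$, on the second part $[\ct(f)]^{p_-}\le[\ct(f)]^{p(x)}$, giving $\int_\rn[\ct(f)]^{p_-}\,dx\le|Q|+\varrho_{p(\cdot)}(\ct(f))<\fz$. Then it invokes \cite[Lemma 3.3(i)]{jy10}, which says that $T_{2,c}^{q}(\urn)\subset T_{2,c}^{2}(\urn)$ for every $q\in(0,\fz)$. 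That inclusion is the hard part and uses the tent-space geometry (the compact support in $\urn$ keeps $t$ bounded away from $0$), not a bare $L^q\hookrightarrow L^2$ embedding. Your proof is missing precisely this ingredient; to repair it you would need either to prove the nesting $T_{2,c}^q\subset T_{2,c}^2$ yourself or cite it, and then replace the H\"older step with a modular estimate of the paper's kind.
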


\begin{proof}
By \cite[Lemma 3.3(i)]{jy10}, we know that, for any
$q\in(0,\fz)$, $T_{2,c}^q(\urn)\subset T_{2,c}^2(\urn)$.
Thus, to prove this proposition, it suffices to show
that $T_{2,c}^{p(\cdot)}(\urn)\subset T_{2,c}^{q_0}(\urn)$ for some $q_0\in(0,\fz)$.
To this end, suppose that $f\in T_{2,c}^{p(\cdot)}(\urn)$ and $\supp f\subset K$,
where $K$ is a compact set in $\urn$. Let $Q$ be a cube in $\rn$ such that
$K\subset \wh Q$. Then $\supp \ct(f)\subset Q$. From this and
the fact that $p_-\le p(x)$ for all $x\in\rn$, we deduce that
\begin{eqnarray*}
\int_\rn\lf[\ct(f)(x)\r]^{p_-}\,dx
&&\le \int_{\{x\in Q:\ \ct(f)(x)<1\}}\lf[\ct(f)(x)\r]^{p_-}\,dx
+ \int_{\{x\in Q:\ \ct(f)(x)\ge1\}}\cdots\\
&&\le |Q|+\int_\rn \lf[\ct(f)(x)\r]^{p(x)}\,dx<\fz,
\end{eqnarray*}
which implies that $T_{2,c}^{p(\cdot)}(\urn)\subset T_{2,c}^{p_-}(\urn)$
as sets and hence completes the proof of Proposition \ref{p-tent}.
\end{proof}

\subsection{Molecular characterizations of $H_L^{p(\cdot)}(\rn)$}
\hskip\parindent
In this subsection, we establish the molecular characterizations of $H_L^{p(\cdot)}(\rn)$.
We begin with some notions.
In what follows, for any $q\in(0,\fz)$, let $L^q(\urn)$ be the
\emph{set of all $q$-integrable functions on $\urn$}
and $L_{\loc}^q(\urn)$ the \emph{set of all locally $q$-integrable functions on $\urn$}.
For any $p(\cdot)\in\cp(\rn)$, let
\begin{equation}\label{3.6x}
s_0:=\lf\lfloor (n/m)(1/{p_-}-1)\r\rfloor,
\end{equation} namely,
$s_0$ denotes the largest integer smaller than or equal to $\frac nm(\frac1{p_-}-1)$.

Let $m$ be as in \eqref{assump1} and $s\in[s_0,\fz)$.
Let $C_{(m,s)}$ be a positive constant, depending on $m$ and $s$, such that
\begin{equation}\label{pi-def}
C_{(m,s)}\int_0^\fz t^{m(s+2)}e^{-2t^m}(1-e^{-t^m})^{s_0+1}\,\frac{dt}t=1.
\end{equation}
Let $q\in(0,\fz)$. Recall that the \emph{operator} $\pi_L$ is defined by setting, for all
$f\in T_{2,c}^q(\urn)$ and $x\in\rn$,
\begin{equation*}
\pi_L(f)(x):=C_{(m,s)}\int_0^\fz Q_{s,t^m}(I-P_{s_0,t^m})(f(\cdot,t))(x)\,\frac{dt}t.
\end{equation*}
Moreover, $\pi_L$ is well defined and $\pi_L(f)\in L^2(\rn)$ for all
$f\in T_{2,c}^q(\urn)$ (see \cite[p.\,4395]{yan08}).

\begin{remark}\label{r-3.14x}
Let $f\in L^2(\rn)$. Then, by \cite[(3.10)]{yan08}, we know that
$$f=C_{(m,s)}\int_0^\fz Q_{s,t^m}(I-P_{s_0,t^m})Q_{t^m}f\,\frac{dt}t,$$
where the integral converges in $L^2(\rn)$; see also \cite{adm96,mc86}.
\end{remark}

\begin{definition}\label{d-m}
Let $p(\cdot)\in C^{\log}(\rn)$ and $s\in[s_0,\fz)$ with $s_0$ as in \eqref{3.6x}. A measurable function
$\alpha$ on $\rn$ is called a \emph{$(p(\cdot),s,L)$-molecule} if there exists a
$(p(\cdot),\fz)$-atom $a$ supported on $Q$ for some cube $Q\st\rn$ such that,
 for all $x\in\rn$, $\alpha(x):=\pi_L(a)(x)$.

When it is necessary to specify the cube $Q$, then $a$ is called a
\emph{$(p(\cdot),s,L)$-molecule associated with $Q$}.
\end{definition}

\begin{remark}
Let $p(\cdot)\in C^{\log}(\rn)$ with $p_-\in(\frac n{n+\theta(L)},\fz)$,
where $p_-$ and $\theta(L)$ are as in
\eqref{2.1x} and \eqref{2.3x}, respectively.
Then, by Proposition \ref{p-tent2}(ii) below, we see that the $(p(\cdot),s,L)$-molecule
is well defined. Indeed, if $a$ is a $(p(\cdot),\fz)$-atom, by Corollary \ref{c-tent}, we then
know $a\in T_2^{p(\cdot)}(\urn)$, which, together with
Proposition \ref{p-tent2}(ii) below, implies that $\pi_L(a)\in H_L^{p(\cdot)}(\rn)$. Thus,
the $(p(\cdot),s,L)$-molecule is well defined.
\end{remark}

The molecular characterization of $H_L^{p(\cdot)}(\rn)$ is stated as follows.

\begin{theorem}\label{t-mol}
Let $p(\cdot)\in C^{\log}(\rn)$ satisfy $p_+\in(0,1]$
and $p_-\in(\frac n{n+\theta(L)},1]$, and $s\in[s_0,\fz)$
with $p_+$, $p_-$, $\theta(L)$ and $s_0$,
respectively, as in \eqref{2.1x}, \eqref{2.3x} and \eqref{3.6x}.

{\rm(i)} If $f\in H_L^{p(\cdot)}(\rn)$, then
there exist $\{\lz_j\}_{j\in\nn}\st\cc$ and a sequence $\{\alpha_j\}_{j\in\nn}$
of $(p(\cdot),s,L)$-molecules associated with cubes $\{Q_j\}_{j\in\nn}$
such that
$f=\sum_{j\in\nn}\lz_j \alpha_j$ in $H_L^{p(\cdot)}(\rn)$ and
$$\cb(\{\lz_j\alpha_j\}_{j\in\nn})
:=\ca(\{\lz_j\}_{j\in\nn},\{Q_j\}_{j\in\nn})\le C\|f\|_{H_L^{p(\cdot)}(\rn)}$$
with $C$ being a positive constant independent of $f$.

{\rm(ii)} Suppose that $\{\lz_k\}_{k\in\nn}\st\cc$ and
$\{\az_k\}_{k\in\nn}$ is a family of $(p(\cdot),s,L)$-molecules
satisfying $\cb(\{\lz_k\az_k\}_{k\in\nn})<\fz$.
Then $\sum_{k\in\nn}\lz_k\az_k$ converges in $H_L^{p(\cdot)}(\rn)$
and
$$\lf\|\sum_{k\in\nn}\lz_k\az_k\r\|_{H_L^{p(\cdot)}(\rn)}
\le C \cb(\{\lz_k\az_k\}_{k\in\nn})$$
with $C$ being a positive constant independent of $\{\lz_k\az_k\}_{k\in\nn}$.
\end{theorem}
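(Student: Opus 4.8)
The plan is to deduce both directions from the tent-space machinery already assembled, using the projection operator $\pi_L$ as the bridge between $T_2^{p(\cdot)}(\urn)$ and $H_L^{p(\cdot)}(\rn)$. The two facts that do the heavy lifting are: (a) the reproducing formula of Remark \ref{r-3.14x}, which for $f\in L^2(\rn)$ writes $f=C_{(m,s)}\int_0^\fz Q_{s,t^m}(I-P_{s_0,t^m})Q_{t^m}f\,\frac{dt}t=\pi_L\big((t,x)\mapsto Q_{t^m}f(x)\big)$ whenever the lifted function lies in a suitable tent space; and (b) the boundedness of $\pi_L$ from $T_2^{p(\cdot)}(\urn)$ to $H_L^{p(\cdot)}(\rn)$, which is exactly Proposition \ref{p-tent2}(ii) (invoked already in the Remark following Definition \ref{d-m}). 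Throughout, the numerology $p_-\in(\frac n{n+\theta(L)},1]$ and $s\ge s_0=\lfloor(n/m)(1/p_--1)\rfloor$ is what makes the molecules decay fast enough for these statements to hold.

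\emph{Part (ii) first, since it is the cleaner direction.} Given molecules $\az_k=\pi_L(a_k)$ with $a_k$ a $(p(\cdot),\fz)$-atom supported in $\wh Q_k$, and $\cb(\{\lz_k\az_k\}_{k\in\nn})=\ca(\{\lz_k\}_{k\in\nn},\{Q_k\}_{k\in\nn})<\fz$, I would set $g:=\sum_{k\in\nn}\lz_ka_k$. By Corollary \ref{c-tent} (its "if" direction) this series converges in $T_2^{p(\cdot)}(\urn)$ to some $g$ with $\|g\|_{T_2^{p(\cdot)}(\urn)}\lesssim\ca(\{\lz_k\}_{k\in\nn},\{Q_k\}_{k\in\nn})$. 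Applying $\pi_L$ and using its boundedness $T_2^{p(\cdot)}(\urn)\to H_L^{p(\cdot)}(\rn)$ (Proposition \ref{p-tent2}(ii)) together with continuity of $\pi_L$ on the convergent atomic series, one gets $\pi_L(g)=\sum_{k\in\nn}\lz_k\pi_L(a_k)=\sum_{k\in\nn}\lz_k\az_k$ in $H_L^{p(\cdot)}(\rn)$ with $\|\sum_k\lz_k\az_k\|_{H_L^{p(\cdot)}(\rn)}\lesssim\|g\|_{T_2^{p(\cdot)}(\urn)}\lesssim\cb(\{\lz_k\az_k\}_{k\in\nn})$. The only care needed is to justify interchanging $\pi_L$ with the (infinite) sum; this follows because partial sums of $g$ converge in $T_2^{p(\cdot)}(\urn)$ and $\pi_L$ is bounded on that space, so the images converge in $H_L^{p(\cdot)}(\rn)$ and the limit must be $\pi_L(g)$.

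\emph{Part (i).} By Definition \ref{d-hardys} and Remark (i) following it, $\wz H_L^{p(\cdot)}(\rn)=L^2(\rn)\cap H_L^{p(\cdot)}(\rn)$ is dense in $H_L^{p(\cdot)}(\rn)$, so it suffices to produce the decomposition for $f\in L^2(\rn)$ with $S_L(f)\in\vlp$, with the stated control of $\cb$, and then pass to the limit using Part (ii) to glue the countably many pieces of a Cauchy sequence into a single molecular sum (a standard telescoping argument, choosing a fast-converging subsequence so that the "tails" have small $\cb$). For $f\in L^2(\rn)$: the function $F(x,t):=Q_{t^m}f(x)$ lies in $T_2^2(\urn)$ with $\|F\|_{T_2^2(\urn)}\sim\|S_L(f)\|_{L^2(\rn)}\sim\|f\|_{L^2(\rn)}$ by \eqref{SL-bounded}, and moreover $\|F\|_{T_2^{p(\cdot)}(\urn)}=\|S_L(f)\|_{\vlp}=\|f\|_{H_L^{p(\cdot)}(\rn)}$ by definition of $S_L$. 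Apply Lemma \ref{l-tent} to $F$: there are $\{\lz_j\}_{j\in\nn}\st\cc$ and $(p(\cdot),\fz)$-atoms $\{a_j\}_{j\in\nn}$ supported in $\{\wh Q_j\}_{j\in\nn}$ with $F=\sum_{j\in\nn}\lz_ja_j$ a.e. on $\urn$ (and, by Corollary \ref{c-tent1}, also in $T_2^{p(\cdot)}(\urn)$; by Remark \ref{r-3.7x}, also in $T_2^2(\urn)$), and $\ca(\{\lz_j\}_{j\in\nn},\{Q_j\}_{j\in\nn})\lesssim\|F\|_{T_2^{p(\cdot)}(\urn)}=\|f\|_{H_L^{p(\cdot)}(\rn)}$. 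Since $F\in T_2^2(\urn)$, the reproducing formula $f=\pi_L(F)$ holds in $L^2(\rn)$ (Remark \ref{r-3.14x}); on the other hand, the $T_2^{p(\cdot)}$-convergent series lets us, via boundedness of $\pi_L$ on $T_2^{p(\cdot)}(\urn)$, write $\pi_L(F)=\sum_{j\in\nn}\lz_j\pi_L(a_j)=\sum_{j\in\nn}\lz_j\az_j$ in $H_L^{p(\cdot)}(\rn)$, where each $\az_j:=\pi_L(a_j)$ is a $(p(\cdot),s,L)$-molecule associated with $Q_j$. Combining, $f=\sum_{j\in\nn}\lz_j\az_j$ in $H_L^{p(\cdot)}(\rn)$ with $\cb(\{\lz_j\az_j\}_{j\in\nn})=\ca(\{\lz_j\}_{j\in\nn},\{Q_j\}_{j\in\nn})\lesssim\|f\|_{H_L^{p(\cdot)}(\rn)}$, as desired.

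\emph{Main obstacle.} The delicate point is reconciling the two modes of convergence of the series $\sum_j\lz_ja_j$: it converges both in $L^2(\rn)$-based $T_2^2(\urn)$ (so that $f=\pi_L(F)$ genuinely reproduces $f$, an $L^2$ identity) and in $T_2^{p(\cdot)}(\urn)$ (so that applying the $H_L^{p(\cdot)}$-bounded operator $\pi_L$ term-by-term is legitimate). One must check that the two limits of $\pi_L$ applied to the partial sums agree — in $L^2(\rn)$ on one hand, in $H_L^{p(\cdot)}(\rn)$ on the other — which is where the density statement $L^2(\rn)\cap H_L^{p(\cdot)}(\rn)$ dense in $H_L^{p(\cdot)}(\rn)$ and the compatibility of the two topologies on $\wz H_L^{p(\cdot)}(\rn)$ are used. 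The rest — verifying $\az_j$ is a molecule, the $\ca$-estimates, the limiting argument from $\wz H_L^{p(\cdot)}$ to all of $H_L^{p(\cdot)}$ — is routine given Lemma \ref{l-tent}, Corollaries \ref{c-tent1} and \ref{c-tent}, and Proposition \ref{p-tent2}.
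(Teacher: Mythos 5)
Your plan is correct and follows essentially the same route as the paper: in both directions the engine is the reproducing formula $f=\pi_L(Q_{t^m}f)$ (Remark \ref{r-3.14x}) together with the boundedness of $\pi_L\colon T_2^{p(\cdot)}(\urn)\to H_L^{p(\cdot)}(\rn)$ (Proposition \ref{p-tent2}(ii)) and the atomic decomposition of $T_2^{p(\cdot)}(\urn)$ (Lemma \ref{l-tent}, Corollaries \ref{c-tent1} and \ref{c-tent}), and you correctly identify the compatibility of the $T_2^2$- and $T_2^{p(\cdot)}$-convergences as the point that must be checked. The only place you leave a real step implicit is the passage from $f\in L^2\cap H_L^{p(\cdot)}$ to general $f\in H_L^{p(\cdot)}$ in part (i): ``choosing a fast-converging subsequence so that the tails have small $\cb$'' is the right idea, but since $\cb$ is built from a variable-exponent quasi-norm that is not subadditive, one must actually verify the summability of the telescoped pieces by working at the level of the modular — the paper does this by fixing $\|f_k-f_{k-1}\|_{H_L^{p(\cdot)}}\le 2^{-k}\|f\|_{H_L^{p(\cdot)}}$, extracting for each $k$ a molecular decomposition with $\ca(\{\lz_j^k\},\{Q_j^k\})\lesssim 2^{-k}\|f\|_{H_L^{p(\cdot)}}$, and then bounding $\int_\rn\{\sum_{k,j}[\,\cdot\,]^{\underline p}\}^{p(x)/\underline p}\,dx$ by $\sum_k 2^{-k\underline p^2}$ via the Minkowski inequality, Remark \ref{r-vlp}(ii)--(iii) and \eqref{simple-ineq}. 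Spelling out that computation would close the only gap in your sketch.
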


The proof of Theorem \ref{t-dual} strongly depends on several
auxiliary estimates and will be presented later.
The following Lemma \ref{l-bigsball} is just \cite[Lemma 2.6]{zyl14}
(For the case when $p_->1$, see also \cite[Corollary 3.4]{Iz10}).

\begin{lemma}\label{l-bigsball}
Let $p(\cdot)\in C^{\log}(\rn)$.
Then there exists a positive
constant $C$ such that, for all cubes $Q_1$ and $Q_2$ satisfying $Q_1\st Q_2$,
\begin{equation*}
C^{-1}\lf(\frac{|Q_1|}{|Q_2|}\r)^{1/p_-}
\le\frac{\|\chi_{Q_1}\|_{\vlp}}{\|\chi_{Q_2}\|_{\vlp}}
\le C\lf(\frac{|Q_1|}{|Q_2|}\r)^{1/p_+},
\end{equation*}
where $p_-$ and $p_+$ are as in \eqref{2.1x}.
\end{lemma}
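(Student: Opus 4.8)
Although Lemma~\ref{l-bigsball} is quoted from \cite[Lemma 2.6]{zyl14}, here is how I would prove it. The plan is to reduce it to the well-known two-sided estimate for the Luxemburg quasi-norm of characteristic functions that follows from the log-H\"older continuity of $p(\cdot)$ (see, for instance, \cite{cfbook,dhr11}): there exists a positive constant $C$, depending only on $p(\cdot)$, such that, for every cube $Q\st\rn$,
\begin{equation}\label{chi-est}
C^{-1}|Q|^{1/p(z)}\le\|\chi_Q\|_{\vlp}\le C|Q|^{1/p(z)}
\end{equation}
whenever $|Q|\le1$ and $z\in Q$, while $C^{-1}|Q|^{1/p_\fz}\le\|\chi_Q\|_{\vlp}\le C|Q|^{1/p_\fz}$ whenever $|Q|\ge1$, where $p_\fz\in[p_-,p_+]$ is the constant from the globally log-H\"older continuous condition. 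Granting these, the lemma follows from an elementary case analysis based on the sizes of $|Q_1|$ and $|Q_2|$: note that $Q_1\st Q_2$ forces $|Q_1|/|Q_2|\in(0,1]$, so that $(|Q_1|/|Q_2|)^{1/p_-}\le(|Q_1|/|Q_2|)^{s}\le(|Q_1|/|Q_2|)^{1/p_+}$ for every $s\in[1/p_+,1/p_-]$.

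First I would treat the case $|Q_2|\le1$ (hence $|Q_1|\le1$): fixing any $z\in Q_1\st Q_2$, \eqref{chi-est} yields $\|\chi_{Q_1}\|_{\vlp}/\|\chi_{Q_2}\|_{\vlp}\sim(|Q_1|/|Q_2|)^{1/p(z)}$, and since $1/p(z)\in[1/p_+,1/p_-]$ this ratio lies, up to the implicit constant, between $(|Q_1|/|Q_2|)^{1/p_-}$ and $(|Q_1|/|Q_2|)^{1/p_+}$. The case $|Q_1|\ge1$ (hence $|Q_2|\ge1$) is identical, using the large-cube estimate and $1/p_\fz\in[1/p_+,1/p_-]$. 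In the remaining case $|Q_1|\le1\le|Q_2|$ we have $\|\chi_{Q_1}\|_{\vlp}\sim|Q_1|^{1/p(z)}$ for $z\in Q_1$ and $\|\chi_{Q_2}\|_{\vlp}\sim|Q_2|^{1/p_\fz}$, so the desired upper bound amounts to $|Q_1|^{1/p(z)-1/p_+}\ls|Q_2|^{1/p_\fz-1/p_+}$, which holds because the left-hand side is at most $1$ (as $|Q_1|\le1$ and $1/p(z)-1/p_+\ge0$) and the right-hand side is at least $1$ (as $|Q_2|\ge1$ and $1/p_\fz-1/p_+\ge0$); the lower bound amounts to $|Q_1|^{1/p(z)-1/p_-}\gs|Q_2|^{1/p_\fz-1/p_-}$, where now the left-hand side is at least $1$ and the right-hand side at most $1$. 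Putting the three cases together proves Lemma~\ref{l-bigsball}.

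The real content, and the only place where the $C^{\log}(\rn)$ hypothesis is essential, is \eqref{chi-est} and its large-cube counterpart; without log-H\"older continuity one only has the crude bounds $\min\{|Q|^{1/p_-},|Q|^{1/p_+}\}\le\|\chi_Q\|_{\vlp}\le\max\{|Q|^{1/p_-},|Q|^{1/p_+}\}$, which are too weak for small cubes (the ratio of two such bounds for $Q_1\st Q_2$ with $|Q_2|\le1$ can blow up). To prove \eqref{chi-est} one fixes $z\in Q$ with $|Q|\le1$ and estimates the modular $\varrho_{p(\cdot)}(\chi_Q/|Q|^{1/p(z)})=\int_Q|Q|^{-p(y)/p(z)}\,dy$ from above and below by positive constants; writing $|Q|^{-p(y)/p(z)}=|Q|^{-1}|Q|^{(p(z)-p(y))/p(z)}$ and using the local log-H\"older inequality together with the standard fact that $\log(1/|Q|)\,|p(z)-p(y)|\ls C_{\log}(p)$ for all $y,z\in Q$ when $|Q|\le1$ (a consequence of $|z-y|\ls|Q|^{1/n}\le1$) shows that $|Q|^{(p(z)-p(y))/p(z)}\sim1$ uniformly, whence the modular is comparable to $\int_Q|Q|^{-1}\,dy=1$; the conclusion $\|\chi_Q\|_{\vlp}\sim|Q|^{1/p(z)}$ then follows from the basic properties of the Luxemburg quasi-norm recalled in Remark~\ref{r-vlp}. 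The large-cube estimate is proved similarly, additionally using the decay estimate $|p(x)-p_\fz|\le C_\fz/\log(e+|x|)$; both facts are standard (cf. \cite{cfbook,dhr11}), so I would merely cite them. I anticipate no genuine difficulty beyond carefully tracking the constants through the three cases, which is routine.
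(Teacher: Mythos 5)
The paper does not prove Lemma~\ref{l-bigsball} itself; it merely cites \cite[Lemma 2.6]{zyl14} (and, for $p_->1$, \cite[Corollary 3.4]{Iz10}), so there is no in-paper proof to compare against. Your argument is correct and is the standard route to this estimate: first establish the two-sided bound $\|\chi_Q\|_{\vlp}\sim|Q|^{1/p(z)}$ for any $z\in Q$ with $|Q|\le1$, together with the large-cube counterpart $\|\chi_Q\|_{\vlp}\sim|Q|^{1/p_\fz}$, and then split into the three size regimes $|Q_2|\le1$, $|Q_1|\ge1$ and $|Q_1|\le1\le|Q_2|$. The case analysis is sound; in particular, in the mixed case your one-sided observations (left-hand side at most $1$, right-hand side at least $1$, and symmetrically for the lower bound) do give the claimed inequalities with a uniform constant.

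Two small points you leave implicit, both routine but worth flagging if this were to be written out in full. First, the passage from $\varrho_{p(\cdot)}(\chi_Q/|Q|^{1/p(z)})\sim1$ to $\|\chi_Q\|_{\vlp}\sim|Q|^{1/p(z)}$ needs the two-sided comparison between modular and norm, say
$\min\{\|g\|_{\vlp}^{p_-},\|g\|_{\vlp}^{p_+}\}\le\varrho_{p(\cdot)}(g)\le\max\{\|g\|_{\vlp}^{p_-},\|g\|_{\vlp}^{p_+}\}$,
since Remark~\ref{r-vlp}(iii), which you cite, yields only the upper bound on $\|\chi_Q\|_{\vlp}$. Second, the inequality $\log(1/|Q|)\,|p(z)-p(y)|\ls C_{\log}(p)$ for $y,z\in Q$, $|Q|\le1$, requires a short separate treatment of the regime where $|Q|$ is bounded away from $0$ (there one simply uses $|p(z)-p(y)|\le p_+-p_-$ and the boundedness of $\log(1/|Q|)$), since the log-H\"older estimate alone degenerates when $|z-y|$ is of order $1$. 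Neither point affects the correctness of the approach.
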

The following Fefferman-Stein vector-valued inequality
of the Hardy-Littlewood maximal operator $\cm$
on the space $\vlp$ was obtained in \cite[Corollary 2.1]{cfmp06}.

\begin{lemma}\label{l-hlmo}
Let $r\in(1,\fz)$ and $p(\cdot)\in C^{\log}(\rn)$.
If $p_-\in(1,\fz)$ with $p_-$ as in \eqref{2.1x}, then there exists a positive
constant $C$ such that, for all sequences $\{f_j\}_{j=1}^\fz$ of measurable
functions,
$$\lf\|\lf\{\sum_{j=1}^\fz \lf[\cm (f_j)\r]^r\r\}^{1/r}\r\|_{\vlp}
\le C\lf\|\lf(\sum_{j=1}^\fz|f_j|^r\r)^{1/r}\r\|_{\vlp}.$$
\end{lemma}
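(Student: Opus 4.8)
The plan is to deduce this vector-valued inequality from the classical \emph{weighted} Fefferman--Stein inequality on $L^{p_0}(w)$, with $w$ a Muckenhoupt $A_{p_0}$-weight, by means of the Rubio de Francia extrapolation theorem adapted to variable Lebesgue spaces. The scalar boundedness of $\cm$ on $\vlp$ recalled in the remark above (which, in turn, rests on $p(\cdot)\in C^{\log}(\rn)$ together with $p_->1$) is precisely what is needed to verify the hypotheses of that extrapolation theorem.

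First, I would observe that it suffices to prove the estimate for finite families $\{f_j\}_{j=1}^N$ with a constant $C$ independent of $N$; the general case then follows by letting $N\to\fz$ and using the monotone convergence theorem, and this reduction also removes any measurability or integrability concern. Next, recall the classical fact that, for each fixed $r\in(1,\fz)$ and each $p_0\in(1,\fz)$, there exists a positive constant $C$, depending only on $n$, $r$, $p_0$ and $[w]_{A_{p_0}}$, such that
\begin{equation*}
\lf\|\lf\{\sum_{j}\lf[\cm(f_j)\r]^r\r\}^{1/r}\r\|_{L^{p_0}(w)}
\le C\lf\|\lf(\sum_{j}|f_j|^r\r)^{1/r}\r\|_{L^{p_0}(w)}
\end{equation*}
for all $w\in A_{p_0}(\rn)$ and all finite sequences $\{f_j\}_j$; see, for instance, \cite{cfbook} and the references therein. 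Since $p_-\in(1,\fz)$, I would fix once and for all some $p_0\in(1,p_-)$.

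Then I would invoke the off-diagonal extrapolation theorem in the variable exponent setting (see \cite{cfbook}): if $(F,G)$ ranges over a family of pairs of nonnegative measurable functions satisfying $\|F\|_{L^{p_0}(w)}\le C\|G\|_{L^{p_0}(w)}$ for every $w\in A_{p_0}(\rn)$, with $C$ depending on $w$ only through $[w]_{A_{p_0}}$, and if $q(\cdot)\in\cp(\rn)$ satisfies $p_0\le q_-$ and $\cm$ is bounded on the associate space $L^{(q(\cdot)/p_0)'}(\rn)$, then $\|F\|_{L^{q(\cdot)}(\rn)}\ls\|G\|_{L^{q(\cdot)}(\rn)}$. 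Applying this with $q(\cdot):=p(\cdot)$, $F:=\{\sum_{j=1}^N[\cm(f_j)]^r\}^{1/r}$ and $G:=(\sum_{j=1}^N|f_j|^r)^{1/r}$, and combining with the weighted estimate from the previous step, we obtain exactly the asserted inequality for finite sums, and hence the lemma.

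The only genuinely non-routine point, which I regard as the main obstacle, is verifying the hypothesis that $\cm$ is bounded on $L^{(p(\cdot)/p_0)'}(\rn)$. For this one checks that, since $1<p_0<p_-\le p_+<\fz$, the exponent $p(\cdot)/p_0$ lies in $C^{\log}(\rn)$ with infimum $p_-/p_0>1$ and supremum $p_+/p_0<\fz$; consequently its dual exponent $(p(\cdot)/p_0)'$ again belongs to $C^{\log}(\rn)$, with infimum strictly larger than $1$ and finite supremum. The variable-exponent maximal theorem (\cite[Theorem 4.3.8]{dhr11}, recalled in the remark above) then gives the boundedness of $\cm$ on $L^{(p(\cdot)/p_0)'}(\rn)$, which closes the argument. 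Alternatively, one could bypass extrapolation and argue directly in $\vlp$ by running a Rubio de Francia iteration scheme inside $L^{(p(\cdot)/p_0)'}(\rn)$, but the route above is more economical.
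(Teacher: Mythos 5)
The paper does not prove this lemma; it is quoted directly from Cruz-Uribe, Fiorenza, Martell and P\'erez \cite[Corollary 2.1]{cfmp06}. Your proof is correct and reconstructs exactly the method used in that reference: the weighted vector-valued Fefferman--Stein estimate with constant controlled by $[w]_{A_{p_0}}$, followed by Rubio de Francia extrapolation into the variable exponent scale, whose hypothesis you verify correctly by noting that for $p_0\in(1,p_-)$ the exponent $p(\cdot)/p_0$ is globally log-H\"older with infimum $p_-/p_0>1$ and supremum $p_+/p_0<\fz$, that $t\mapsto t/(t-1)$ is Lipschitz on the compact interval $[p_-/p_0,p_+/p_0]$ bounded away from $1$ so the conjugate $(p(\cdot)/p_0)'$ inherits the log-H\"older modulus with exponent bounds $(p_+/p_0)'>1$ and $(p_-/p_0)'<\fz$, and hence $\cm$ is bounded on $L^{(p(\cdot)/p_0)'}(\rn)$ by the variable-exponent maximal theorem.
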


\begin{remark}\label{r-hlmo}
Let $k\in\nn$ and $p(\cdot)\in C^{\log}(\rn)$.
Then, by Lemma \ref{l-hlmo} and the fact that, for all cubes $Q\subset\rn$,
$r\in(0,p_-)$,
$\chi_{2^kQ}\le 2^{kn/r} [\cm(\chi_{Q})]^{1/r}$, we conclude that there exists
a positive constant $C$ such that,
for any $\{\lz_j\}_{j\in\nn}\subset\cc$
and cubes $\{Q_j\}_{j\in\nn}$ of $\rn$,
$$\ca(\{\lz_j\}_{j\in\nn},\{2^kQ_j\}_{j\in\nn})\le
C2^{kn(\frac1r-\frac1{p_+})}\ca(\{\lz_j\}_{j\in\nn},\{Q_j\}_{j\in\nn}),$$
where $p_-$ and $p_+$ are as in \eqref{2.1x}.
\end{remark}

\begin{proposition}\label{p-tent2}
Let $p(\cdot)\in C^{\log}(\rn)$ with $p_-\in(\frac n{n+\theta(L)},\fz)$
and $q\in(1,\fz)$, where $p_-$ and $\theta(L)$ are as in
\eqref{2.1x} and \eqref{2.3x}, respectively. Then

{\rm (i)} the operator $\pi_{L}$ is a bounded linear operator from
$T_2^q(\urn)$ to $L^q(\rn)$;

{\rm (ii)} the operator $\pi_{L}$, well defined on the space
$T_{2,c}^{p(\cdot)}(\urn)$, extends to a bounded linear operator from
$T_{2}^{p(\cdot)}(\urn)$ to $H_L^{p(\cdot)}(\rn)$.
\end{proposition}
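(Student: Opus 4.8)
\textbf{Proof proposal for Proposition \ref{p-tent2}.}
The plan is to prove (i) first and then bootstrap it to (ii) via the atomic decomposition of $T_2^{p(\cdot)}(\urn)$ from Corollary \ref{c-tent}. For part (i), I would follow the duality argument that is standard for the operator $\pi_L$ (as in \cite{yan08}): since $\pi_L$ is already known to be well defined and $L^2$-valued on $T_{2,c}^q(\urn)$ (and $T_{2,c}^q(\urn)$ is dense in $T_2^q(\urn)$), it suffices to estimate $\la \pi_L(f),h\ra$ for $f\in T_{2,c}^q(\urn)$ and $h\in L^{q'}(\rn)$ with $\|h\|_{L^{q'}(\rn)}\le1$, where $1/q+1/q'=1$. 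Writing out the definition of $\pi_L$, unwinding the duality to move the adjoint operators $Q_{s,t^m}^\ast(I-P_{s_0,t^m})^\ast$ onto $h$, and applying the Cauchy--Schwarz inequality in $(y,t)$ against the measure $\frac{dy\,dt}{t}$, one bounds $|\la\pi_L(f),h\ra|$ by $\|f\|_{T_2^2\text{-type pairing}}$ times the square function of $h$ built from $(t^mL^\ast)^{s+1}e^{-t^mL^\ast}(I-e^{-t^mL^\ast})^{s_0+1}h$. The latter square function is controlled by $\|h\|_{L^{q'}(\rn)}$ because $L$ (hence $L^\ast$) has a bounded $H^\fz$ functional calculus on $L^2$, so by the Calderón--Zygmund theory for the associated square functions (together with the pointwise heat kernel bounds in Assumption \ref{as-a}) it is bounded on $L^{q'}(\rn)$ for $q'\in(1,\fz)$; the pairing-side factor is handled by the tent-space duality $(T_2^q)^\ast=T_2^{q'}$ of \cite{cms85}. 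This yields $\|\pi_L(f)\|_{L^q(\rn)}\ls\|f\|_{T_2^q(\urn)}$.

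For part (ii), the key point is a uniform bound on molecules: if $a$ is a $(p(\cdot),\fz)$-atom supported in $\wh Q$ for a cube $Q\st\rn$, then $\alpha:=\pi_L(a)$ is a $(p(\cdot),s,L)$-molecule and one shows it satisfies suitable $L^2$ off-diagonal estimates on the annuli $U_k(Q):=2^kQ\setminus 2^{k-1}Q$, namely
\begin{equation*}
\|\alpha\|_{L^2(U_k(Q))}\ls 2^{-k(n/m)(\theta(L)+n)/n\cdot\delta}\,|Q|^{1/2}\|\chi_Q\|_{\vlp}^{-1}
\end{equation*}
for some $\delta>0$ and every $k\in\zz_+$ — here one uses the pointwise kernel decay \eqref{assump1}--\eqref{assump2} of $e^{-tL}$ (and its $t$-derivatives, via \eqref{partial1}), the support condition $\supp a\st\wh Q$, and the $T_2^2$-bound $\|a\|_{T_2^2(\urn)}\le|Q|^{1/2}\|\chi_Q\|_{\vlp}^{-1}$ coming from Remark \ref{r-atom}(ii) and the definition of atoms (note $a\in T_2^2$ by Proposition \ref{p-tent} when $a$ has compact support, or directly). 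Given such molecular bounds, for $f\in T_{2,c}^{p(\cdot)}(\urn)$ one decomposes $f=\sum_j\lz_j a_j$ by Corollary \ref{c-tent} (with convergence also in $T_2^{p(\cdot)}(\urn)$ by Corollary \ref{c-tent1}), so $\pi_L(f)=\sum_j\lz_j\pi_L(a_j)=:\sum_j\lz_j\alpha_j$, and one must estimate $\|S_L(\sum_j\lz_j\alpha_j)\|_{\vlp}$. Splitting $S_L(\alpha_j)$ according to the annuli $U_k(Q_j)$, using the $L^2$-boundedness of $S_L$ from \eqref{SL-bounded} on the "local" part and the molecular decay on the "far" parts, then invoking Lemma \ref{l-estimate} to pass from the functions $2^{kn/2}\chi_{2^kQ_j}$-normalized pieces back to $\chi_{2^kQ_j}$, and finally Remark \ref{r-hlmo} to absorb the factors $2^{kn(\cdots)}$ into $\ca(\{\lz_j\}_j,\{Q_j\}_j)$ (the geometric series in $k$ converges precisely because $p_->\frac n{n+\theta(L)}$), we obtain $\|\pi_L(f)\|_{H_L^{p(\cdot)}(\rn)}\ls\ca(\{\lz_j\}_j,\{Q_j\}_j)\sim\|f\|_{T_2^{p(\cdot)}(\urn)}$ by Corollary \ref{c-tent}. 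A density argument (Proposition \ref{p-tent} gives $T_{2,c}^{p(\cdot)}\st T_{2,c}^2$, and $T_{2,c}^{p(\cdot)}$ is dense in $T_2^{p(\cdot)}$) then extends $\pi_L$ to all of $T_2^{p(\cdot)}(\urn)$.

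The main obstacle I expect is the molecular off-diagonal estimate on the annuli $U_k(Q)$: carefully exploiting the cancellation encoded in the factor $(I-P_{s_0,t^m})=(I-e^{-t^mL})^{s_0+1}$ (which supplies $s_0+1$ extra powers of $t^m$, and it is exactly the choice $s_0=\lfloor(n/m)(1/p_--1)\rfloor$ in \eqref{3.6x} that makes the resulting decay rate beat the growth $\|\chi_{2^kQ}\|_{\vlp}/\|\chi_Q\|_{\vlp}\ls 2^{kn/p_-}$ controlled via Lemma \ref{l-bigsball}), combined with the support of $a$ in $\wh Q$ and the decay of $g$ in \eqref{assump2}, requires a delicate splitting of the $t$-integral into $t\lesssim\ell(Q)$ and $t\gtrsim\ell(Q)$ and a Minkowski/Cauchy--Schwarz bookkeeping that tracks the dependence on $k$; the non-translation-invariance of $\|\cdot\|_{\vlp}$ does not bite here only because Lemma \ref{l-bigsball} bounds the ratio of the $\vlp$-quasinorms of nested cubes. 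Everything downstream — summing over $j$ and $k$ — is then routine given Lemmas \ref{l-estimate} and \ref{l-hlmo} and Remark \ref{r-hlmo}.
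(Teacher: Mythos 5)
Your overall architecture for part (ii) — decompose $f$ via Corollary \ref{c-tent1} into $(p(\cdot),\fz)$-atoms, write $\pi_L(f)=\sum_j\lz_j\alpha_j$ in $L^2(\rn)$, estimate $S_L(\alpha_j)$ on dyadic annuli $U_i(Q_j)$ with the local piece handled by \eqref{SL-bounded} and part (i), sum using Lemma \ref{l-estimate}, Lemma \ref{l-hlmo} and Remark \ref{r-hlmo}, and observe that $p_->\frac n{n+\theta(L)}$ is exactly what makes the geometric series in $i$ converge — is the same as the paper's. For part (i) the paper does not give a proof at all; it simply cites \cite[Lemma 3.4(a)]{yan08}, and your duality sketch is a plausible reconstruction of that argument.

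The place where your proposal has a genuine gap is in the off-diagonal estimate you use for the far annuli. You propose to prove an $L^2$ annular decay estimate for the \emph{molecule itself}, of the form $\|\alpha\|_{L^2(U_k(Q))}\ls 2^{-k\delta'}|Q|^{1/2}\|\chi_Q\|_{\vlp}^{-1}$, and then "split $S_L(\alpha_j)$ according to the annuli \dots using the $L^2$-boundedness of $S_L$ from \eqref{SL-bounded} on the local part and the molecular decay on the far parts." This does not close as stated: $S_L$ is a nonlocal operator, so global $L^2$-boundedness of $S_L$ together with $L^2$-decay of $\alpha$ on annuli does not yield $L^q$-decay of $S_L(\alpha)$ on the annulus $U_i(Q)$ — the local mass of $\alpha$ near $Q$ can still contribute to $S_L(\alpha)(x)$ for $x$ far from $Q$. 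To turn your plan into a proof you would need, in addition, an off-diagonal estimate for $S_L$ (equivalently, a Schur-type argument tracking the kernel decay of $Q_{t^m}$ against the annular mass of $\alpha$). The paper avoids this entirely by instead quoting the pointwise bound of \cite[(4.12)]{jyz09},
\begin{equation*}
S_L(\alpha_j)(x)\ls(r_{Q_j})^{\vez+\frac n2}|x-x_{Q_j}|^{-(n+\vez)}\|a_j\|_{T_2^2(\urn)}
\qquad\text{for }x\in(4Q_j)^\complement,
\end{equation*}
which bounds the \emph{composition} $S_L\circ\pi_L$ applied to a function supported in $\wh Q_j$ directly, and is exactly the quantity needed. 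This estimate then produces \eqref{pi-y1}, after which your bookkeeping (Lemma \ref{l-estimate}, $\chi_{2^iQ_j}\le 2^{ni/r}[\cm(\chi_{Q_j})]^{1/r}$, Lemma \ref{l-hlmo}, and the choice $r\in(\frac n{n+\vez},p_-)$) runs exactly as in the paper. So the fix is to replace the molecular $L^2$-decay step by a pointwise off-diagonal bound on $S_L(\pi_L(a))$ away from the supporting cube.
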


\begin{proof}
To prove this proposition, it suffices to show (ii), since
(i) is just \cite[Lemma 3.4(a)]{yan08}.
Noticing that, due to Corollary \ref{c-tent}, $T_{2,c}^{p(\cdot)}(\urn)$ is a dense subset of
$T_2^{p(\cdot)}(\urn)$,
to prove (ii), we only need to
show that $\pi_L$ maps $T_{2,c}^{p(\cdot)}(\urn)$ continuously into $H_L^{p(\cdot)}$.

To this end, let $f\in T_{2,c}^{p(\cdot)}(\urn)$.
Then, by Proposition \ref{p-tent}, we know that
$f\in T_{2,c}^2(\urn)$ and hence $\pi_L$ is well defined on $T_{2,c}^{p(\cdot)}(\urn)$
by (i). This, combined with Lemma \ref{l-tent}, Corollary \ref{c-tent1} and
Remark \ref{r-3.7x},
implies that there exist sequences $\{\lz_j\}_{j\in\nn}\st\cc$
and $\{a_j\}_{j\in\nn}$ of $(p(\cdot),\fz)$-atoms such that,
for each $j\in\nn$, $\supp a_j\subset \wh Q_j$ with some cube $Q_j\st\rn$,
$f=\sum_{j\in\nn}\lz_ja_j$ in both $T_2^{p(\cdot)}(\urn)$
and $T_2^2(\urn)$, and
$\ca(\{\lz_j\}_{j\in\nn},\{Q_j\}_{j\in\nn})\ls\|f\|_{T_2^{p(\cdot)}(\urn)}$.
Thus, it follows from (i) that, for all $N\in\nn$,
\begin{equation*}
\lf\|\pi_L\lf(f-\sum_{j=1}^N\lz_ja_j\r)\r\|_{L^2(\rn)}
\ls\lf\|\sum_{j=N+1}^\fz\lz_ja_j\r\|_{T_2^2(\urn)}\to0
\end{equation*}
as $N\to\fz$; furthermore,
\begin{eqnarray}\label{0609-x}
\pi_L(f)&&=\lim_{N\to\fz}\sum_{j=1}^N\pi_L(\lz_ja_j)\\
&&=\sum_{j\in\nn}\lz_jC_{(m,s)}
\int_0^\fz Q_{s,t^m}(I-P_{s_0,t^m})(a_j(\cdot,t))\,\frac{dt}t
=:\sum_{j\in\nn}\lz_j\alpha_j\noz
\end{eqnarray}
in $L^2(\rn)$, where $s_0$ is as in \eqref{3.6x} and $s\in[s_0,\fz)$.

Next, we prove
$\lf\|S_{L}(\pi_L(f))\r\|_{\vlp}\ls\|f\|_{T_2^{p(\cdot)}(\urn)}$.
Observe that, for almost every $x\in\rn$,
\begin{equation*}
S_L(\pi_L(f))(x)
=S_L\lf(\sum_{j\in\nn}\lz_j\alpha_j\r)(x)
\le\sum_{j\in\nn}S_L(\lz_j\alpha_j)(x)
\end{equation*}
due to \eqref{0609-x}, the Fatou lemma and the fact that $S_L$ is bounded on $L^2(\rn)$
(see \eqref{SL-bounded}).
Then, by Remark \ref{r-vlp}(i) and the Fatou lemma of $\vlp$ (see \cite[Theorem 2.61]{cfbook}),
we see that
\begin{eqnarray}\label{pi-x}
\quad\lf\|S_{L}(\pi_L(f))\r\|_{\vlp}
\le\lf\{\sum_{i=0}^\fz\lf\|\lf(\sum_{j\in\nn}\lf[
S_L(\lz_j\alpha_j)\chi_{U_i(Q_j)}\r]^{\underline{p}}\r)^{\frac1{\underline{p}}}
\r\|_{\vlp}^{\underline{p}}\r\}^{\frac1{\underline{p}}},
\end{eqnarray}
where $\underline{p}$ is as in \eqref{3.1x} and, for any $j\in\nn$,
$U_0(Q_j):=4Q_j$ and $U_i(Q_j):=2^{i+2}Q_j\backslash (2^{i+1}Q_j)$
for all $i\in\nn$.

By \eqref{SL-bounded}, (i) and Lemma \ref{l-bigsball},
we find that, for all $q\in(1,\fz)$,
\begin{eqnarray*}
\lf\|S_L(\alpha_j)\r\|_{L^q(4Q_j)}
\ls \|\alpha_j\|_{L^q(\rn)}\sim\|\pi_L(a_j)\|_{L^q(\rn)}\ls \|a_j\|_{T_2^{q}(\urn)}
\ls|Q_j|^{\frac1q}\|\chi_{4Q_j}\|_{\vlp}^{-1}.
\end{eqnarray*}
From this, Lemma \ref{l-estimate} and Remark \ref{r-hlmo}, we deduce that
\begin{eqnarray}\label{pi-y}
&&\lf\|\lf(\sum_{j\in\nn}\lf[|\lz_j|S_L(\alpha_j)\chi_{4Q_j}
\r]^{\underline{p}}\r)^{\frac1{\underline{p}}}
\r\|_{\vlp}\\
&&\hs\ls\ca\lf(\{\lz_j\}_{j\in\nn},\{4Q_j\}_{j\in\nn}\r)
\ls\ca\lf(\{\lz_j\}_{j\in\nn},\{Q_j\}_{j\in\nn}\r)
\ls\|f\|_{T_2^{p(\cdot)}(\urn)}.\noz
\end{eqnarray}

Since $p_-\in (\frac n{n+\theta(L)},\fz)$,
we choose $\vez\in(0,\theta(L))$ such that $p_-\in (\frac n{n+\vez},\fz)$.
For $i\in\nn$, by \cite[(4.12)]{jyz09}, we know that, for all $x\in (4Q_j)^\complement$,
\begin{equation*}
S_L(\alpha_j)(x)\ls (r_{Q_j})^{\vez+\frac n2}|x-x_{Q_j}|^{-(n+\vez)}
\|a_j\|_{T_2^2(\urn)}.
\end{equation*}
Then, by this, the H\"older inequality and Lemma \ref{l-bigsball}, we further find that,
for any $q\in(p_+,\fz)\cap[2,\fz)$ with $p_+$ as in \eqref{2.1x},
\begin{eqnarray}\label{pi-y1}
\quad\|S_L(\alpha_j)\|_{L^q(U_i(Q_j))}
&&\ls2^{i(\frac nq-n-\vez)}|Q_j|^{\frac 1q-\frac12}\|a_j\|_{T_2^2(\urn)}
\ls 2^{i(\frac nq-n-\vez)}\|a_j\|_{T_2^q(\urn)}\\
&&\ls2^{-i(n+\vez)}|2^iQ_j|^{\frac1q}\|\chi_{Q_j}\|_{\vlp}^{-1}.\noz
\end{eqnarray}
Observe that, for all $r\in(\frac n{n+\vez},p_-)$,
$\chi_{2^iQ_j}\le 2^{\frac nri}[\cm(\chi_{Q_j})]^\frac1r$.
Thus, from this, \eqref{pi-y1} and Lemmas \ref{l-estimate} and \ref{l-hlmo}, we deduce that
\begin{eqnarray}\label{pi-z}
&&\lf\|\lf(\sum_{j\in\nn}\lf[|\lz_j|S_L(\alpha_j)\chi_{U_i(Q_j)}
\r]^{\underline{p}}\r)^{\frac1{\underline{p}}}
\r\|_{\vlp}\\
&&\hs\ls2^{-i(n+\vez)}\lf\|\lf\{\sum_{j\in\nn}
\lf[\frac{|\lz_j|\chi_{2^iQ_j}}
{\|\chi_{Q_j}\|_{\vlp}}\r]^{\underline{p}}\r\}^{\frac1{\underline{p}}}
\r\|_{\vlp}\noz\\
&&\hs\ls2^{-i(n+\vez-n/r)}\ca(\{\lz_j\}_{j\in\nn},\{Q_j\}_{j\in\nn})
\ls2^{-i(n+\vez-n/r)}\|f\|_{T_2^{p(\cdot)}(\urn)}.\noz
\end{eqnarray}

Combining \eqref{pi-x}, \eqref{pi-y} and \eqref{pi-z},
together with $r>\frac n{n+\vez}$, we conclude that
\begin{eqnarray*}
\lf\|S_{L}(\pi_L(f))\r\|_{\vlp}
&&\ls
\lf\{\sum_{i=0}^\fz2^{-i(n+\vez-\frac nr)\underline{p}}
\|f\|_{T_2^{p(\cdot)}(\urn)}\r\}^{1/\underline{p}}\ls\|f\|_{T_2^{p(\cdot)}(\urn)},
\end{eqnarray*}
which implies that $\pi_L$ is a bounded linear operator from
$T_{2}^{p(\cdot)}(\urn)$ to $H_L^{p(\cdot)}(\rn)$ and hence completes the proof of
Proposition \ref{p-tent2}.
\end{proof}

We now turn to the proof of Theorem \ref{t-mol}.

\begin{proof}[Proof of Theorem \ref{t-mol}]
We first prove (i).
Let $C_{(m,s)}$ be the constant as in \eqref{pi-def} and
$f\in H_L^{p(\cdot)}(\rn)\cap L^2(\rn)$.
Then, by Remark \ref{r-3.14x}, we see that
\begin{equation}\label{molecule-x1}
f=C_{(m,s)}\int_0^\fz Q_{s,t^m}(I-P_{s_0,t^m})Q_{t^m}f\,\frac{dt}t=\pi_L(Q_{t^m}f)
\end{equation}
in $L^2(\rn)$, where $s_0$ is as in \eqref{3.6x} and $s\in[s_0,\fz)$.
Since $f\in H_L^{p(\cdot)}(\rn)$, it follows that
$Q_{t^m}f\in T_2^{p(\cdot)}(\urn)$. Thus, by Lemma \ref{l-tent} and Corollary
\ref{c-tent1},
we find that $Q_{t^m}f=\sum_{j\in\nn}\lz_ja_j$ in the sense of both pointwise
and $T_2^{p(\cdot)}(\urn)$, where $\{\lz_j\}_{j\in\nn}\subset\cc$ and $\{a_j\}_{j\in\nn}$ are
$(p(\cdot),\fz)$-atoms satisfying that, for each $j\in\nn$,
$\supp a_j\subset Q_j$ with some cube $Q_j\subset\rn$, and
$$\ca\lf(\{\lz_j\}_{j\in\nn},\{Q_j\}_{j\in\nn}\r)
\ls\|Q_{t^m}f\|_{T_2^{p(\cdot)}(\urn)}\sim\|f\|_{H_L^{p(\cdot)}(\rn)}.$$
For any $j\in\nn$, let $\alpha_j:=\pi_L(a_j)$.
Then $\alpha_j$ is a $(p(\cdot),s,L)$-molecule and,
by \eqref{molecule-x1} and Proposition \ref{p-tent2}, we conclude that
\begin{equation*}
f=\pi_L(Q_{t^m}f)=\sum_{j\in\nn}\lz_j\pi_L(a_j)
=:\sum_{j\in\nn}\lz_j\alpha_j
\end{equation*}
in both $L^2(\rn)$ and $H_L^{p(\cdot)}(\rn)$.

Now, for any $f\in H_L^{p(\cdot)}(\rn)$, since $H_L^{p(\cdot)}(\rn)\cap L^2(\rn)$
is dense in $H_L^{p(\cdot)}(\rn)$, it follows that there exists a sequence
$\{f_k\}_{k\in\nn}\subset [H_L^{p(\cdot)}(\rn)\cap L^2(\rn)]$ such that,
for all $k\in\nn$,
\begin{equation*}
\|f-f_k\|_{H_L^{p(\cdot)}(\rn)}\le 2^{-k}\|f\|_{H_L^{p(\cdot)}(\rn)}.
\end{equation*}
Let $f_0:=0$. Then
\begin{equation}\label{molecule-z1}
f=\sum_{k\in\nn}(f_k-f_{k-1})\ {\rm in}\ H_L^{p(\cdot)}(\rn).
\end{equation}
From the above argument, we deduce that, for each $k\in\nn$, there exist
$\{\lz_j^k\}_{k\in\nn}\st\cc$ and a sequence $\{\az_j^k\}_{j\in\nn}$ of
$(p(\cdot),s,L)$-molecules such that
\begin{equation}\label{molecule-z2}
f_k-f_{k-1}=\sum_{j\in\nn}\lz_j^k\alpha_j^k\
{\rm in}\ H_L^{p(\cdot)}(\rn)
\end{equation}
and
$$\ca(\{\lz_j^k\}_{j\in\nn},\{Q_j^k\}_{j\in\nn})
\ls \|f_k-f_{k-1}\|_{H_L^{p(\cdot)}(\rn)}
\ls2^{-k}\|f\|_{H_L^{p(\cdot)}(\rn)},$$
where, for any $k,\,j\in\nn$, $Q_j^k$ denotes the cube appearing in the definition
of the $(p(\cdot),s,L)$-molecule $\az_j^k$.
By this, the Minkowski inequality, (ii) and (iii) of Remark \ref{r-vlp}
and \eqref{simple-ineq}, we see that
\begin{eqnarray*}
&&\int_\rn\lf\{\sum_{k\in\nn}\sum_{j\in\nn}\lf[\frac{|\lz_j^k|\chi_{Q_j^k}}
{\|f\|_{H_L^{p(\cdot)}(\rn)}\|\chi_{Q_j^k}\|_{\vlp}}
\r]^{\underline{p}}\r\}^{\frac{p(x)}{\underline{p}}}\,dx\\
&&\hs\le\int_\rn\lf\{\sum_{k\in\nn}\lf[\sum_{j\in\nn}
\lf(\frac{|\lz_j^k|\chi_{Q_j^k}}{\|f\|_{H_L^{p(\cdot)}(\rn)}\|\chi_{Q_j^k}\|_{\vlp}}
\r)^{\underline{p}}\r]^{p(x)}\r\}^{\frac1{\underline{p}}}\,dx\\
&&\hs\ls\lf(\sum_{k\in\nn}\lf\{\int_\rn2^{-kp(x)}\lf[\sum_{j\in\nn}
\lf(\frac{|\lz_j^k|\chi_{Q_j^k}}{2^{-k}\|f\|_{H_L^{p(\cdot)}(\rn)}
\|\chi_{Q_j^k}\|_{\vlp}}\r)^{\underline{p}}\r]^{\frac{p(x)}{\underline{p}}}\,dx\r\}^{\underline{p}}\r)^{\frac1{\underline{p}}}\\
&&\hs\ls\lf(\sum_{k\in\nn}2^{-k\underline{p}^2}\r)^{1/\underline{p}}<\fz,
\end{eqnarray*}
which implies that
$\cb\lf(\{\lz_j^k\alpha_j^k\}_{j,k\in\nn}\r)\ls\|f\|_{H_L^{p(\cdot)}(\rn)}$.
Moreover, by \eqref{molecule-z1} and \eqref{molecule-z2}, we conclude that
$$f=\sum_{k\in\nn}\sum_{j\in\nn}\lz_j^k\alpha_j^k$$
in $H_L^{p(\cdot)}(\rn)$ and hence the proof of (i) is completed.

Next, we show (ii). Without loss of generality, we may assume that, for each
$k\in\nn$, $\az_k:=\pi_L(a_k)$, where $a_k$ is a $(p(\cdot),\fz)$-atom
supported on $\wh Q_k$ for some cube $Q_k\st\rn$.
Then, from Proposition \ref{p-tent2}(ii) and Corollary \ref{c-tent},
we deduce that, for all $N_1,\,N_2\in\nn$ with $N_1<N_2$,
\begin{eqnarray*}
\lf\|\sum_{k=N_1}^{N_2}\lz_k\az_k\r\|_{H_L^{p(\cdot)}(\rn)}
&&\sim \lf\|\pi_L\lf(\sum_{k=N_1}^{N_2}\lz_ka_k\r)\r\|_{H_L^{p(\cdot)}(\rn)}
\ls\lf\|\sum_{k=N_1}^{N_2}\lz_ka_k\r\|_{T_{2}^{p(\cdot)}(\urn)}\\
&&\ls \lf\|\lf\{\sum_{k=N_1}^{N_2}\lf[\frac{|\lz_k|\chi_{Q_k}}{\|\chi_{Q_k}\|_{\vlp}}
\r]^{\underline{p}}\r\}^{\frac1{\underline{p}}}\r\|_{\vlp},
\end{eqnarray*}
which tends to zero as $N_1,\,N_2\to\fz$ due to the dominated convergence
theorem. Thus, $\sum_{k\in\nn}\lz_k\az_k$
converges in $H_L^{p(\cdot)}(\rn)$ and, by the Fatou lemma of $\vlp$,
Proposition \ref{p-tent2}(ii) and Corollary \ref{c-tent}, we further know that
\begin{eqnarray*}
&&\lf\|\sum_{k=1}^{\fz}\lz_k\az_k\r\|_{H_L^{p(\cdot)}(\rn)}\\
&&\hs\le \liminf_{N\to\fz}\lf\|\sum_{k=1}^{N}\lz_k\az_k\r\|_{H_L^{p(\cdot)}(\rn)}
=\liminf_{N\to\fz}\lf\|\pi_L\lf(\sum_{k=1}^{N}\lz_ka_k\r)\r\|_{H_L^{p(\cdot)}(\rn)}\\
&&\hs\ls \liminf_{N\to\fz}\lf\|\sum_{k=1}^{N}\lz_ka_k\r\|_{T_2^{p(\cdot)}(\urn)}
\ls\liminf_{N\to\fz}\lf\|\lf\{\sum_{k=1}^{N}
\lf[\frac{|\lz_k|\chi_{Q_k}}{\|\chi_{Q_k}\|_{\vlp}}
\r]^{\underline{p}}\r\}^{\frac1{\underline{p}}}\r\|_{\vlp}\\
&&\hs\ls \cb(\{\lz_k\az_k\}_{k\in\nn}),
\end{eqnarray*}
which completes the proof of Theorem \ref{t-mol}.
\end{proof}

In what follows, for all $s\in[s_0,\fz)$ with $s_0$ as in \eqref{3.6x} and $p(\cdot)\in\cp(\rn)$,
denote by $H_{L,{\rm fin}}^{p(\cdot)}(\rn)$ the \emph{set of finite linear combinations of
$(p(\cdot),s,L)$-molecules}.
For any $f\in H_{L,{\rm fin}}^{p(\cdot)}(\rn)$, the quasi-norm is given by
$$\|f\|_{H_{L,{\rm fin}}^{p(\cdot)}(\rn)}
:=\inf\lf\{\cb(\{\lz_j\alpha_j\}_{j=1}^N):\ N\in \nn,\
f=\sum_{j=1}^N\lz_j\alpha_j\r\},$$
where the infimum is taken over all finite
molecular decompositions of $f$.

\begin{corollary}\label{c-dense}
Let $p(\cdot)\in C^{\log}(\rn)$ satisfy
$p_+\in(0,1]$ and $p_-\in(\frac n{n+\theta(L)},1]$,
where $p_-$, $p_+$ and $\theta(L)$ are,
respectively, as in \eqref{2.1x} and \eqref{2.3x}.
Then $H_{L,{\rm fin}}^{p(\cdot)}(\rn)$ is dense in $H_L^{p(\cdot)}(\rn)$.
\end{corollary}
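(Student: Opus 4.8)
The plan is to deduce Corollary \ref{c-dense} directly from the molecular characterization in Theorem \ref{t-mol} together with the density of $H_L^{p(\cdot)}(\rn)\cap L^2(\rn)$ in $H_L^{p(\cdot)}(\rn)$. First I would fix $f\in H_L^{p(\cdot)}(\rn)$ and $\vez\in(0,\fz)$, and use Theorem \ref{t-mol}(i) to write $f=\sum_{j\in\nn}\lz_j\alpha_j$ in $H_L^{p(\cdot)}(\rn)$, where each $\alpha_j=\pi_L(a_j)$ is a $(p(\cdot),s,L)$-molecule associated with a cube $Q_j$ and
$$\ca(\{\lz_j\}_{j\in\nn},\{Q_j\}_{j\in\nn})\le C\|f\|_{H_L^{p(\cdot)}(\rn)}<\fz.$$
The idea is then to show that the partial sums $f_N:=\sum_{j=1}^N\lz_j\alpha_j$, which plainly lie in $H_{L,{\rm fin}}^{p(\cdot)}(\rn)$, converge to $f$ in $H_L^{p(\cdot)}(\rn)$ with control on their finite quasi-norms.

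For the convergence, the key point is that $f-f_N=\sum_{j=N+1}^\fz\lz_j\alpha_j$, and by Theorem \ref{t-mol}(ii) applied to the tail,
$$\|f-f_N\|_{H_L^{p(\cdot)}(\rn)}\le C\,\cb(\{\lz_j\alpha_j\}_{j=N+1}^\fz)
=C\,\lf\|\lf\{\sum_{j=N+1}^\fz\lf[\frac{|\lz_j|\chi_{Q_j}}{\|\chi_{Q_j}\|_{\vlp}}\r]^{\underline{p}}\r\}^{\frac1{\underline{p}}}\r\|_{\vlp}.$$
Since the full series $\{\sum_{j\in\nn}[|\lz_j|\chi_{Q_j}/\|\chi_{Q_j}\|_{\vlp}]^{\underline p}\}^{1/\underline p}$ has finite $\vlp$-quasi-norm, the dominated convergence theorem in $\vlp$ (as in \cite[Theorem 2.62]{cfbook}, used in the proof of Corollary \ref{c-tent1}) shows this tail tends to $0$ as $N\to\fz$. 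Hence $f_N\to f$ in $H_L^{p(\cdot)}(\rn)$. Moreover, each $f_N$ admits the finite molecular decomposition $\sum_{j=1}^N\lz_j\alpha_j$, so
$$\|f_N\|_{H_{L,{\rm fin}}^{p(\cdot)}(\rn)}\le \cb(\{\lz_j\alpha_j\}_{j=1}^N)\le \ca(\{\lz_j\}_{j\in\nn},\{Q_j\}_{j\in\nn})\le C\|f\|_{H_L^{p(\cdot)}(\rn)},$$
which also yields a uniform bound if one wants the comparability of quasi-norms on the finite space, though for mere density only the convergence $f_N\to f$ is needed.

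I do not anticipate a serious obstacle here: the proof is essentially an assembly of Theorem \ref{t-mol}(i) (to get the molecular decomposition), Theorem \ref{t-mol}(ii) (to estimate tails in $H_L^{p(\cdot)}(\rn)$), and the dominated convergence theorem in $\vlp$. The one point that requires a little care is justifying that the tail $\sum_{j>N}\lz_j\alpha_j$ again satisfies the hypothesis $\cb(\{\lz_j\alpha_j\}_{j>N})<\fz$ of Theorem \ref{t-mol}(ii) and that $f-f_N$ genuinely equals this series in $H_L^{p(\cdot)}(\rn)$; both follow from the absolute convergence statement and the fact that $f=\sum_{j\in\nn}\lz_j\alpha_j$ already holds in $H_L^{p(\cdot)}(\rn)$, so the partial-sum tails coincide with the series tails. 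This completes the proof of Corollary \ref{c-dense}.
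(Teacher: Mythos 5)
Your proof is correct, but it routes through Theorem \ref{t-mol} where the paper argues more directly. The paper's proof of Corollary \ref{c-dense} works entirely inside $H_L^{p(\cdot)}(\rn)\cap L^2(\rn)$: for such $f$, it decomposes $Q_{t^m}f$ in $T_2^{p(\cdot)}(\urn)$ via Lemma \ref{l-tent} and Corollary \ref{c-tent1}, sets $\alpha_k:=\pi_L(a_k)$, and uses the boundedness of $\pi_L$ (Proposition \ref{p-tent2}(ii)) to get $\|f-\sum_{k=1}^N\lz_k\alpha_k\|_{H_L^{p(\cdot)}(\rn)}\ls\|Q_{t^m}f-\sum_{k=1}^N\lz_ka_k\|_{T_2^{p(\cdot)}(\urn)}\to0$; it then invokes the density of $H_L^{p(\cdot)}(\rn)\cap L^2(\rn)$ in $H_L^{p(\cdot)}(\rn)$ to conclude. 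Your version instead applies Theorem \ref{t-mol}(i) to a general $f\in H_L^{p(\cdot)}(\rn)$ and observes that the partial sums are finite molecular combinations. One comment: once you accept that Theorem \ref{t-mol}(i) asserts convergence of $\sum_j\lz_j\alpha_j$ to $f$ \emph{in} $H_L^{p(\cdot)}(\rn)$, the conclusion $\|f-f_N\|_{H_L^{p(\cdot)}(\rn)}\to0$ is immediate, so the subsequent appeal to Theorem \ref{t-mol}(ii) on the tails together with dominated convergence is a redundant re-derivation of that convergence (though harmless and indeed a self-contained way to justify it). Both routes ultimately rest on the same two ingredients — the tent-space atomic decomposition and the boundedness of $\pi_L$ — and both implicitly use the density of $L^2\cap H_L^{p(\cdot)}(\rn)$, so neither is more economical in substance; the paper's version just avoids quoting the full molecular characterization since Corollary \ref{c-dense} sits in the same section as Theorem \ref{t-mol} and the $L^2$ argument is essentially a fragment of that theorem's proof.
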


\begin{proof}
Let $f\in H_L^{p(\cdot)}(\rn)\cap L^2(\rn)$. Then
$Q_{t^m}f\in T_2^{p(\cdot)}(\urn)$ and hence, by Lemma \ref{l-tent},
we have $Q_{t^m}f=\sum_k\lz_ka_k$ in $T_2^{p(\cdot)}(\urn)$,
where $\{\lz_k\}_{k\in\nn}\st\cc$ and $\{a_k\}_{k\in\nn}$ are $(p(\cdot),\fz)$-atoms.
For every $k\in\nn$, let $\alpha_k:=\pi_L(a_k)$. Then $\{\az_k\}_{k\in\nn}$
are $(p(\cdot),s,L)$-molecules. Thus,
by Proposition \ref{p-tent2}(ii), we conclude that, for all $N\in\nn$,
\begin{eqnarray*}
\lf\|f-\sum_{k=1}^N\lz_k\az_k\r\|_{H_L^{p(\cdot)}(\rn)}
&&=\lf\|\pi_L\lf(Q_{t^m}f-\sum_{k=1}^N\lz_ka_k\r)\r\|_{H_L^{p(\cdot)}(\rn)}\\
&&\ls\lf\|Q_{t^m}f-\sum_{k=1}^N\lz_ka_k\r\|_{T_2^{p(\cdot)}(\urn)}
\to0,
\end{eqnarray*}
as $N\to\fz$, which implies that $H_{L,{\rm fin}}^{p(\cdot)}(\rn)$ is dense in
$H_L^{p(\cdot)}(\rn)\cap L^2(\rn)$ and hence in $H_L^{p(\cdot)}(\rn)$.
This finishes the proof of Corollary \ref{c-dense}.
\end{proof}

\section{BMO-type spaces and the Duality of $H_L^{p(\cdot)}(\rn)$\label{s4}}
\hskip\parindent
In this section, we mainly consider the duality of $H_L^{p(\cdot)}(\rn)$.
To this end, motivated by \cite{jyz09},
we introduce the space ${\rm BMO}_{p(\cdot),L}^{s}(\rn)$
associated with the operator $L$ and the variable exponent $p(\cdot)$.

\begin{definition}\label{d-bmo}
Let $L$ satisfy Assumptions \ref{as-a} and \ref{as-b}, $p(\cdot)\in C^{\log}(\rn)$
with $p_+\in(0,1]$ and $s\in[s_0,\fz)$,
where $p_+$ and $s_0$ are, respectively, as in \eqref{2.1x} and \eqref{3.6x}.
Then the \emph{{\rm BMO}-type space}
${\rm BMO}_{p(\cdot),L}^{s}(\rn)$ is defined to be the set of all
functions $f\in\cm(\rn)$ such that $\|f\|_{{\rm BMO}_{p(\cdot),L}^{s}(\rn)}<\fz$,
where
$$\|f\|_{{\rm BMO}_{p(\cdot),L}^{s}(\rn)}
:=\sup_{Q\subset\rn}\frac{|Q|^{1/2}}{\|\chi_Q\|_{\vlp}}\lf\{\int_Q
|f(x)-P_{s,(r_Q)^m}f(x)|^2\,dx\r\}^{\frac12}$$
and the supremum is taken over all cubes $Q$ of $\rn$.
\end{definition}

\begin{remark}
(i) The space $({\rm BMO}_{p(\cdot),L}^{s}(\rn),
\|\cdot\|_{{\rm BMO}_{p(\cdot),L}^{s}(\rn)})$ is a vector space
with the semi-norm vanishing on the space $\mathcal K_{(L,s)}(\rn)$
which is defined by
\begin{eqnarray*}
\mathcal K_{(L,s)}(\rn):=&&\lf\{f\in\cm(\rn):\ P_{s,t}f(x)=f(x)\r.\\
&&\quad\quad\quad\lf.{\rm for\ almost\ every}\ x\in\rn\ {\rm and\ all}\ t\in(0,\fz)
\r\}.
\end{eqnarray*}
In this article, the space ${\rm BMO}_{p(\cdot),L}^{s}(\rn)$ is understood
to be modulo $\mathcal K_{(L,s)}(\rn)$; see \cite[Section 6]{dy05} for a discussion
of $\mathcal K_{(L,0)}(\rn)$ when $L$ is a second
order elliptic operator of divergence form or a Schr\"odinger operator.

(ii) If $p(\cdot)\equiv1$ and $s=0$,
then ${\rm BMO}_{p(\cdot),L}^{s}(\rn)$ is just ${\rm BMO}_L(\rn)$ introduced by Duong
and Yan \cite{dy05}. If $p(\cdot)\in\cp(\rn)$ is defined by
$\frac1{p(\cdot)}:=\alpha+\frac12$ for some constant $\alpha\in(0,\frac{\theta(L)}n)$,
then ${\rm BMO}_{p(\cdot),L}^{s}(\rn)$ becomes the space
$\mathfrak{L}_L(\alpha,2,s)$ studied in \cite{yan08}.
\end{remark}

Now we state the main result of this section as follows.

\begin{theorem}\label{t-dual}
Let $p(\cdot)\in C^{\log}(\rn)$ satisfy $p_+\in(0,1]$
and $p_-\in(\frac n{n+\theta(L)},1]$
with $p_+$, $p_-$ and $\theta(L)$,
respectively, as in \eqref{2.1x} and \eqref{2.3x}.
Let $s_0$ be as in \eqref{3.6x} and $L^\ast$ denote the adjoint operator of $L$.
Then $(H_L^{p(\cdot)}(\rn))^\ast$ coincides with ${\rm BMO}_{p(\cdot),L^\ast}^{s_0}(\rn)$
in the following sense:

{\rm(i)} If $g\in {\rm BMO}_{p(\cdot),L^\ast}^{s_0}(\rn)$, then the linear
mapping $\ell$, which is initially defined on $H_{L,{\rm fin}}^{p(\cdot)}(\rn)$ by
\begin{equation}\label{dual-y}
\ell_g(f):=\int_\rn f(x)g(x)\,dx,
\end{equation}
extends to a bounded linear functional on $H_L^{p(\cdot)}(\rn)$ and
$$\|\ell_g\|_{(H_L^{p(\cdot)}(\rn))^\ast}\le C\|g\|_{{\rm BMO}_{p(\cdot),L^\ast}^{s_0}(\rn)},$$
where $C$ is a positive constant independent of $g$.

{\rm(ii)} Conversely, let $\ell$ be a bounded linear functional on
$H_L^{p(\cdot)}(\rn)$. Then $\ell$ has the form as in \eqref{dual-y} with
a unique $g\in {\rm BMO}_{p(\cdot),L^\ast}^{s_0}(\rn)$ for all
$f\in H_{L,{\rm fin}}^{p(\cdot)}(\rn)$ and
$$\|g\|_{{\rm BMO}_{p(\cdot),L^\ast}^{s_0}(\rn)}
\le \wz C\|\ell\|_{(H_L^{p(\cdot)}(\rn))^\ast},$$
where $\wz C$ is a positive constant independent of $\ell$.
\end{theorem}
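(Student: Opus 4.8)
\textbf{Proof proposal for Theorem \ref{t-dual}.}
The plan is to follow the classical Fefferman-Stein duality argument, adapted to the variable-exponent setting, exploiting the molecular characterization from Theorem \ref{t-mol} together with the tent-space machinery. For part (i), given $g\in{\rm BMO}_{p(\cdot),L^\ast}^{s_0}(\rn)$, I would first show that $\ell_g$ is well defined and bounded on the dense subspace $H_{L,{\rm fin}}^{p(\cdot)}(\rn)$. The key pairing identity is that, for a finite molecular sum $f=\sum_j\lz_j\alpha_j$ with $\alpha_j=\pi_L(a_j)$ and $a_j$ a $(p(\cdot),\infty)$-atom supported in $\wh Q_j$, one can move $\pi_L$ onto $g$ via its adjoint, writing $\int_\rn \alpha_j g\,dx = C_{(m,s)}\int_0^\fz\int_\rn a_j(x,t)\,\overline{Q_{s,t^m}^{\ast}(I-P_{s_0,t^m}^{\ast})g(x)}\,dx\,\frac{dt}t$, where the operators with $\ast$ are formed from $L^\ast$. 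The crucial analytic input is that $g\mapsto t^m L^\ast e^{-t^m L^\ast}(I-P_{s_0,t^m}^\ast)g$, as a function on $\urn$, has its square function controlled by a $p(\cdot)$-Carleson measure with total mass bounded by $\|g\|_{{\rm BMO}_{p(\cdot),L^\ast}^{s_0}(\rn)}$; this is exactly the content promised in Propositions \ref{p-bmo1}, \ref{p-bmo2} and \ref{p-carleson}. Combining this Carleson control with the tent-space duality (Proposition \ref{p-dual}, pairing $T_2^{p(\cdot)}$ against $T_{2,\fz}^{p(\cdot)}$) and the atomic bound $\|a_j\|_{T_2^2}\le|Q_j|^{1/2}\|\chi_{Q_j}\|_{\vlp}^{-1}$, a Cauchy-Schwarz estimate on each tent $\wh Q_j$ yields $|\int_\rn\alpha_j g|\ls\|g\|_{{\rm BMO}_{p(\cdot),L^\ast}^{s_0}(\rn)}$, and then summing over $j$ against the coefficient control $\ca(\{\lz_j\},\{Q_j\})\le C\|f\|_{\hlp}$ (Remark \ref{r-tent} gives $\sum_j|\lz_j|\le\ca(\cdots)$ when $p_+\le1$, which absorbs the atom normalizations) gives $|\ell_g(f)|\ls\|g\|_{{\rm BMO}_{p(\cdot),L^\ast}^{s_0}(\rn)}\|f\|_{\hlp}$. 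Density via Corollary \ref{c-dense} then extends $\ell_g$ to all of $\hlp$.

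For part (ii), given a bounded functional $\ell$ on $\hlp$, I would recover $g$ by testing $\ell$ against molecules. The standard route is: for each ball (cube) $Q$, the subspace of $L^2$-functions of the form $\pi_L(a)$ with $a$ supported in $\wh Q$ embeds continuously into $\hlp$ (by Theorem \ref{t-mol}(ii) applied to a single molecule, with norm $\ls\|\chi_Q\|_{\vlp}^{-1}|Q|^{1/2}$ times the $L^2$-size), so $\ell$ restricted there is represented by some $g_Q\in L^2(Q)$ up to the kernel $\mathcal K_{(L^\ast,s_0)}$; a consistency/patching argument over an exhausting sequence of cubes (using that the representatives agree modulo $\mathcal K_{(L^\ast,s_0)}$ on overlaps, which is where one must be careful about the quotient) produces a single $g\in\cm(\rn)$ with $\ell(f)=\int fg$ for all $f\in H_{L,{\rm fin}}^{p(\cdot)}(\rn)$. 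Then the ${\rm BMO}$-norm bound $\|g\|_{{\rm BMO}_{p(\cdot),L^\ast}^{s_0}(\rn)}\ls\|\ell\|$ follows by testing: for fixed $Q$, the function $(I-P_{s,(r_Q)^m}^\ast)g$ restricted to $Q$, suitably normalized, is (modulo constants coming from $P$) paired with an $L^2$-function supported on $Q$ that, after applying $\pi_L$, becomes a molecule with controlled $\hlp$-norm; taking the supremum over the unit ball of $L^2(Q)$ recovers the $L^2(Q)$-norm of $f-P_{s,(r_Q)^m}f$ and hence the ${\rm BMO}$-norm. Uniqueness of $g$ modulo $\mathcal K_{(L^\ast,s_0)}$ is immediate since $H_{L,{\rm fin}}^{p(\cdot)}(\rn)$ is dense in $\hlp$ and separates points of the quotient.

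The main obstacle, as the introduction already flags, is the lack of translation invariance of $\|\cdot\|_{\vlp}$: in the classical case one normalizes atoms by $|Q|^{1/p}$ and everything is scale/translation homogeneous, but here the normalization is $\|\chi_Q\|_{\vlp}^{-1}$, which depends on the \emph{location} of $Q$, not just its size. This infects every estimate in which one dilates a cube (passing from $Q$ to $2^kQ$ in the annular decomposition of $S_{L^\ast}$-type tails) or compares cubes at different positions. The remedy throughout is Lemma \ref{l-bigsball}, which bounds $\|\chi_{Q_1}\|_{\vlp}/\|\chi_{Q_2}\|_{\vlp}$ by the volume ratio raised to $1/p_-$ or $1/p_+$ for nested cubes; combined with the log-H\"older continuity packaged in Lemma \ref{l-estimate} and the vector-valued maximal inequality (Lemma \ref{l-hlmo}, Remark \ref{r-hlmo}), this lets one sum geometric series in $k$ with a gain $2^{-k(n+\vez-n/r)}$ exactly as in the proof of Proposition \ref{p-tent2}. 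I expect the most delicate single step to be establishing the $p(\cdot)$-Carleson measure estimate for ${\rm BMO}_{p(\cdot),L^\ast}^{s_0}$ functions (Propositions \ref{p-bmo1}--\ref{p-carleson}), since there one must convert the local $L^2$-control defining the ${\rm BMO}$-norm into a global tent-space statement while tracking the location-dependent normalization; the heat-kernel decay \eqref{partial1} and the off-diagonal estimates for $P_{s,t^m}$, $Q_{s,t^m}$ in Remark \ref{r-operator}(i) supply the needed quantitative decay to make the annular sums converge.
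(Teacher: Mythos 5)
Your part (i) follows essentially the same path as the paper: move $\pi_L$ onto $g$, invoke the Carleson-measure bound from Proposition \ref{p-carleson} together with the $(p(\cdot),\infty)$-atom normalization, apply Cauchy--Schwarz on each tent, and sum the coefficients via Remark \ref{r-tent}. One small difference is worth noting: you pair the defining tent-space atom $a_j$ directly against $Q_{s,t^m}^\ast(I-P_{s_0,t^m}^\ast)g$, whereas the paper instead invokes Lemma \ref{l-3.26x} (which pairs $Q_{t^m}\alpha_j$, not $a_j$, against the Carleson function) and then performs a \emph{second} atomic decomposition of $Q_{t^m}\alpha_j$ in $T_2^{p(\cdot)}(\urn)$. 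Your version is slightly more direct, but it requires a variant of Lemma \ref{l-3.26x} with the atom $a$ in place of $Q_{t^m}\alpha$; that variant is true and proved by the same Fubini argument, so this is fine.

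Part (ii) is where your route diverges substantially from the paper's, and the paper's route is cleaner. You propose the classical localize-and-patch strategy: test $\ell$ against $\pi_L$ of $L^2$-data supported in $\wh Q$, obtain representatives $g_Q$ modulo $\mathcal K_{(L^\ast,s_0)}(\rn)$, and patch over an exhausting family of cubes. You rightly flag that the consistency-of-representatives step is delicate precisely because of the quotient; in fact making this rigorous would need an argument that is not present in your sketch, and the kernel $\mathcal K_{(L^\ast,s_0)}(\rn)$ is not as tame as constants in the classical case. The paper sidesteps the whole patching problem by factoring through the tent space: since $f=\pi_L(Q_{t^m}f)$ on $L^2$ (Remark \ref{r-3.14x}) and $\pi_L$ is bounded from $T_2^{p(\cdot)}(\urn)$ into $\hlp$ (Proposition \ref{p-tent2}(ii)), the composed functional $\ell\circ\pi_L$ is bounded on $T_2^{p(\cdot)}(\urn)$, and Proposition \ref{p-dual}(ii) gives a \emph{global} representative $h\in T_{2,\fz}^{p(\cdot)}(\urn)$ at one stroke; then $g:=\int_0^\fz Q_{t^m}^\ast h\,\frac{dt}{t}$. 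After that, the BMO-norm bound is obtained by exactly the $L^2(Q)$-duality test you describe, combined with Lemma \ref{l-dual}. So the final normalization estimate agrees with yours, but the construction of $g$ itself is done globally via tent-space duality rather than by patching — this is the genuine difference, and I'd recommend adopting the paper's route since it avoids the consistency issue you flagged as the ``where one must be careful'' step.

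Your diagnosis of the main technical obstacle (location-dependence of $\|\chi_Q\|_{\vlp}$, remedied by Lemma \ref{l-bigsball}, Lemma \ref{l-estimate} and the maximal inequality) is correct and matches the paper's own account; the identification of Propositions \ref{p-bmo1}--\ref{p-carleson} as the hard core is also accurate.
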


\begin{remark}\label{r-dual} Let $p(\cdot)$, $s_0$, $L$, $L^*$ and $\theta(L)$ be as in Theorem \ref{t-dual}.

(i) If $f\in H_L^{p(\cdot)}(\rn)$, then, from Theorem \ref{t-dual},
we deduce that $f$ satisfies the cancelation condition
$\int_\rn f(x)g(x)\,dx=0$ for all $g\in \mathcal K_{L^\ast,s_0}(\rn)$, since,
if $g\in \mathcal K_{L^\ast,s_0}(\rn)$, then $\|g\|_{{\rm BMO}_{p(\cdot),L^\ast}^{s_0}(\rn)}=0$.
Observe that, if $g\in \mathcal K_{L^\ast,s_0}(\rn)$, then $g$ is not necessary to be zero
almost everywhere and hence, if $f$ is a smooth function with compact support, then
$\int_\rn f(x)g(x)\,dx$ may not equal zero. Therefore, smooth functions with
compact supports are not necessary
to be in $H_L^{p(\cdot)}(\rn)$; see also \cite[p.\,962]{dy05}.

(ii) Observe that, by the proof of Corollary \ref{c-dense}, we see that
$$H_{L,{\rm fin}}^{p(\cdot)}(\rn)\st [H_L^{p(\cdot)}(\rn)\cap L^2(\rn)]$$
and $H_{L,{\rm fin}}^{p(\cdot)}(\rn)$ is dense in $H_L^{p(\cdot)}(\rn)\cap L^2(\rn)$.
From this, it follows that, if we require that \eqref{dual-y} holds true for
all $f\in H_L^{p(\cdot)}(\rn)\cap L^2(\rn)$ instead
of all $f\in H_{L,{\rm fin}}^{p(\cdot)}(\rn)$, then all conclusions of Theorem \ref{t-dual}
also hold true.
\end{remark}

To prove Theorem \ref{t-dual}, we need some preparations.

\begin{proposition}\label{p-bmo1}
Let $p(\cdot)\in C^{\log}(\rn)$ satisfy $p_+\in(0,1]$ and
$p_-\in(\frac n{n+\theta(L)},1]$,
and $s\in[s_0,\fz)$,
where $p_+$, $p_-$, $\theta(L)$ and $s_0$ are, respectively, as in
\eqref{2.1x}, \eqref{2.3x} and \eqref{3.6x}.
Then there exists
a positive constant $C$ such that, for all $t\in(0,\fz)$, $K\in(1,\fz)$,
$f\in {\rm BMO}_{p(\cdot),L}^{s}(\rn)$ and $x\in\rn$, when $p_+=1$,
\begin{eqnarray}\label{bmo1x}
&&\lf|P_{s,t}f(x)-P_{s,Kt}f(x)\r|\\
&&\hs\le C(1+\log_2K)\lf\|\chi_{Q(x,(Kt)^{\frac1m})}\r\|_{\vlp}
|Q(x,(Kt)^{\frac1m})|^{-1}\|f\|_{{\rm BMO}_{p(\cdot),L}^{s}(\rn)}\noz
\end{eqnarray}
and, when $p_+\in(0,1)$,
\begin{eqnarray}\label{bmo1x1}
&&\lf|P_{s,t}f(x)-P_{s,Kt}f(x)\r|\\
&&\hs\le C\lf\|\chi_{Q(x,(Kt)^{\frac1m})}\r\|_{\vlp}
|Q(x,(Kt)^{\frac1m})|^{-1}\|f\|_{{\rm BMO}_{p(\cdot),L}^{s}(\rn)}.\noz
\end{eqnarray}
\end{proposition}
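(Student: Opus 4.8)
The plan is to exploit a standard chaining (telescoping) argument in $K$, reducing the estimate to the case where $K$ is replaced by a dyadic scale $2^N$, and then to control each single-scale difference $|P_{s,2^jt}f(x)-P_{s,2^{j+1}t}f(x)|$ by the $\mathrm{BMO}_{p(\cdot),L}^{s}(\rn)$-norm of $f$. First I would write
$$
P_{s,t}f(x)-P_{s,Kt}f(x)=\bigl[P_{s,t}f(x)-P_{s,2^Nt}f(x)\bigr]+\bigl[P_{s,2^Nt}f(x)-P_{s,Kt}f(x)\bigr],
$$
where $N:=\lfloor\log_2 K\rfloor$, so that $2^N\le K<2^{N+1}$; the last bracket is a single-scale-type difference, and the first telescopes as $\sum_{j=0}^{N-1}[P_{s,2^jt}f(x)-P_{s,2^{j+1}t}f(x)]$. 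Thus everything comes down to a bound, for each $j$, of the form
$$
\bigl|P_{s,2^jt}f(x)-P_{s,2^{j+1}t}f(x)\bigr|
\lesssim \frac{\|\chi_{Q(x,(2^jt)^{1/m})}\|_{\vlp}}{|Q(x,(2^jt)^{1/m})|}\,\|f\|_{\mathrm{BMO}_{p(\cdot),L}^{s}(\rn)}.
$$

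To prove this single-scale estimate I would proceed as in \cite{jyz09,yan08}: write $P_{s,2^jt}-P_{s,2^{j+1}t}$ in terms of the kernels $p_{s,(2^jt)}$, $p_{s,(2^{j+1}t)}$ of Remark \ref{r-operator}, which by \eqref{operator-k} and \eqref{partial1} enjoy the pointwise bounds $t^{-n}g(|x-y|/t^{1/m})$ together with the decay \eqref{assump2}, in particular $r^{n+\vez}g(r)\to0$. Applying the operator to $f$ after the crucial insertion $f=[f-P_{s,(r_Q)^m}f]+P_{s,(r_Q)^m}f$ with $Q:=Q(x,(2^jt)^{1/m})$, and using that $P_{s,\cdot}$ reproduces $P_{s,(r_Q)^m}f$ up to controllable error (here one uses the semigroup identities and the normalization in \eqref{pi-def}-type constants), reduces the pointwise value to an average of $|f-P_{s,(r_Q)^m}f|$ against an $L^1$-normalized kernel with the decay $g$. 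Splitting $\rn$ into the annuli $U_i(Q)=2^{i+1}Q\setminus2^iQ$, the contribution of the $i$-th annulus is estimated by Cauchy--Schwarz:
$$
\frac{1}{|2^iQ|}\int_{2^iQ}|f-P_{s,(r_Q)^m}f|\;\lesssim\;|2^iQ|^{-1/2}\Bigl\{\int_{2^iQ}|f-P_{s,(r_{2^iQ})^m}f|^2\Bigr\}^{1/2}+\text{(telescoping correction)},
$$
and the definition of $\|f\|_{\mathrm{BMO}_{p(\cdot),L}^{s}(\rn)}$ bounds each square bracket by $\|\chi_{2^iQ}\|_{\vlp}|2^iQ|^{-1/2}\|f\|_{\mathrm{BMO}_{p(\cdot),L}^{s}(\rn)}$. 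The kernel decay contributes a factor $\lesssim 2^{-i(n+\vez)}$ from the annulus $U_i(Q)$; Lemma \ref{l-bigsball} converts $\|\chi_{2^iQ}\|_{\vlp}$ into $\|\chi_Q\|_{\vlp}$ at the cost of at most $2^{in/p_-}$; since $p_-\in(\frac{n}{n+\theta(L)},1]$ we may pick $\vez\in(0,\theta(L))$ with $n+\vez>n/p_-$, so the series $\sum_i 2^{-i(n+\vez-n/p_-)}$ converges, yielding the desired single-scale bound.

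Finally I would assemble the pieces. When $p_+\in(0,1)$, Lemma \ref{l-bigsball} gives $\|\chi_{Q(x,(2^jt)^{1/m})}\|_{\vlp}|Q(x,(2^jt)^{1/m})|^{-1}\lesssim 2^{(j-N)(n/p_+-n)}\|\chi_{Q(x,(Kt)^{1/m})}\|_{\vlp}|Q(x,(Kt)^{1/m})|^{-1}$ with $n/p_+-n>0$, so summing the telescoping series over $j=0,\dots,N-1$ produces a geometric series that converges to a constant multiple of the right-hand side of \eqref{bmo1x1}, uniformly in $K$; the residual term $P_{s,2^Nt}f-P_{s,Kt}f$ is handled the same way. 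When $p_+=1$ this gain is lost — each of the $N\sim\log_2K$ single-scale terms only comparably contributes $\|\chi_{Q(x,(Kt)^{1/m})}\|_{\vlp}|Q(x,(Kt)^{1/m})|^{-1}\|f\|_{\mathrm{BMO}}$ (again via Lemma \ref{l-bigsball}, now with exponent $1$, which only gives $|2^jQ|/|2^NQ|$ linearly), so the sum over $j$ costs the extra factor $(1+\log_2K)$, giving \eqref{bmo1x}. The main obstacle I anticipate is precisely this borderline behaviour: because the quasi-norm $\|\cdot\|_{\vlp}$ is not translation invariant and, when $p_+=1$, Lemma \ref{l-bigsball} yields only the linear (non-summable) volume ratio on the upper side, one must carefully track where the logarithmic factor genuinely enters and confirm it cannot be avoided, which is why the statement is split into the two cases. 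A secondary technical point is justifying the kernel manipulations and the convergence of the annular decomposition for general $f\in\cm(\rn)$ (using the space $\cm_\beta(\rn)$ to make the integrals against $g$ absolutely convergent), which I would do by the same truncation arguments used in \cite{yan08,jyz09}.
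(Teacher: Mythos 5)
Your overall scaffolding — telescope $P_{s,t}f - P_{s,Kt}f$ over dyadic scales, reduce to a single-scale estimate, and then pay the price of Lemma \ref{l-bigsball} which gives the geometric gain $2^{(i-N)n(1/p_+-1)}$ when $p_+<1$ but only a flat contribution (hence the $\log_2 K$) when $p_+=1$ — is exactly what the paper does, and that part is fine.

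The gap is in the single-scale estimate, specifically in how you bound the far-annulus contribution $\int_{2^iQ}|f-P_{s,(r_Q)^m}f|$. You apply the BMO definition at the dilated cube $2^iQ$, which produces $\{\int_{2^iQ}|f-P_{s,(r_{2^iQ})^m}f|^2\}^{1/2}$, and then you write a ``(telescoping correction)'' for the mismatch $\sup_{2^iQ}|P_{s,(r_Q)^m}f-P_{s,(r_{2^iQ})^m}f|$. But that correction is a difference across a scale ratio $2^{im}$, which is precisely the kind of multi-scale estimate the proposition is asserting — so the argument as written is circular. The paper avoids this entirely by a tiling trick: cover $Q(x,(2^iv)^{1/m})$ by $N_i\sim 2^{in/m}$ cubes $Q_{i,j}$ of side length exactly $v^{1/m}$, and apply the BMO definition on each small tile $Q_{i,j}$; since $(r_{Q_{i,j}})^m=v$ matches the scale of the $P_{s,v}$ already present in $f-P_{s,v}f$, no telescoping correction arises, and two applications of Lemma \ref{l-bigsball} (one downward to the tile, one upward to the base cube) plus the tile count give the convergent factor $2^{-\frac{n}{m}[\frac\vez n+\frac1{p_+}-\frac1{p_-}]i}$, which in fact only requires $\vez>n(\frac1{p_-}-\frac1{p_+})$ rather than your stronger $\vez>n(\frac1{p_-}-1)$. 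A related imprecision: the paper enters the annular estimate via the exact commutation identity $P_{s,t}f-P_{s,v}f=P_{s,t}(f-P_{s,v}f)-P_{s,v}(f-P_{s,t}f)$ with $\tfrac t2\le v\le 2t$, which annihilates the ``smooth part'' outright; your insertion $f=[f-P_{s,(r_Q)^m}f]+P_{s,(r_Q)^m}f$ leaves a residual $[P_{s,t}-P_{s,2t}]P_{s,(r_Q)^m}f$ whose controllability you assert but do not justify. Both the identity and the tiling are needed to close the single-scale claim rigorously.
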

\begin{proof}
Without loss of generality, we may assume that $\|f\|_{{\rm BMO}_{p(\cdot),L}^{s}(\rn)}=1$.
We claim that, for all $t,\ v\in(0,\fz)$ with
$\frac t2\le v\le 2t$, and $x\in\rn$,
\begin{equation}\label{bmo1}
\lf|P_{s,t}f(x)-P_{s,v}f(x)\r|\ls \lf\|\chi_{Q(x,t^\frac1m)}\r\|_{\vlp}|Q(x,t^{\frac1m})|^{-1}.
\end{equation}
If this claim holds true, then, by Lemma \ref{l-bigsball}, we see that
\begin{eqnarray*}
\lf|P_{s,t}f(x)-P_{s,Kt}f(x)\r|
&&\le \sum_{i=0}^{l-1}\lf|P_{s,2^it}f(x)-P_{s,2^{i+1}t}f(x)\r|
+\lf|P_{s,2^lt}f(x)-P_{s,Kt}f(x)\r|\\
&&\ls\sum_{i=0}^{l-1}\frac{\|\chi_{Q(x,(2^it)^\frac1m)}\|_{\vlp}}
{|Q(x,(2^it)^{\frac1m})|}
+\frac{\|\chi_{Q(x,(Kt)^\frac1m)}\|_{\vlp}}{|Q(x,(Kt)^{\frac1m})|}\\
&&\ls\lf\{\sum_{i=0}^{l-1}\lf[\frac{|Q(x,(2^it)^{\frac1m})|}
{|Q(x,(Kt)^{\frac1m})|}\r]^{\frac1{p_+}-1}
+1\r\}\frac{\|\chi_{Q(x,(Kt)^\frac1m)}\|_{\vlp}}{|Q(x,(Kt)^{\frac1m})|},
\end{eqnarray*}
where $l:=\lfloor \log_2K\rfloor$.
By this, we further conclude that, when $p_+=1$,
$$\lf|P_{s,t}f(x)-P_{s,Kt}f(x)\r|
\ls(1+\log_2K)\lf\|\chi_{Q(x,(Kt)^\frac1m)}\r\|_{\vlp}|Q(x,(Kt)^{\frac1m})|^{-1}$$
and, when $p_+\in(0,1)$,
$$\lf|P_{s,t}f(x)-P_{s,Kt}f(x)\r|
\ls\lf\|\chi_{Q(x,(Kt)^\frac1m)}\r\|_{\vlp}|Q(x,(Kt)^{\frac1m})|^{-1},$$
which implies that \eqref{bmo1x} and \eqref{bmo1x1} hold true.

Therefore, to complete the proof of this proposition,
it remains to prove the above claim.
By the commutative properties of semigroups, we have
\begin{equation}\label{bmo2x}
P_{s,t}f-P_{s,v}f=P_{s,t}(f-P_{s,v}f)-P_{s,v}(f-P_{s,t}f).
\end{equation}
Since $\theta(L)\in(n[\frac1{p_-}-1],\fz)$ and $p_+\in(0,1]$,
it follows that there exists $\vez\in(0,\theta(L))$ such that
\begin{equation}\label{bmo1y}
\vez>n\lf(\frac1{p_-}-1\r)>n\lf(\frac1{p_-}-\frac1{p_+}\r).
\end{equation}
From \eqref{operator-k}, Assumption \ref{as-a}, the H\"older inequality,
Lemma \ref{l-bigsball} and the fact that $\frac t2\le v\le 2t$,
we deduce that, for all $x\in\rn$,
\begin{eqnarray}\label{bmo2}
\quad\lf|P_{s,t}(f-P_{s,v}f)(x)\r|
&&\ls t^{-\frac nm}\int_\rn g\lf(\frac{|x-y|}{t^{1/m}}\r)
|f(y)-P_{s,v}f(y)|\,dy\\
&&\ls\lf\{v^{-\frac nm}\int_{Q(x,v^{\frac1m})}|f(y)-P_{s,v}f(y)|^2\,dy\r\}^{1/2}\noz\\
&&\quad\quad+v^{-\frac nm}\sum_{i=1}^\fz
\int_{S_i}g\lf(\frac{|x-y|}{t^{\frac1m}}\r)|f(y)-P_{s,v}f(y)|\,dy\noz\\
&&\ls\lf\|\chi_{Q(x,t^\frac1m)}\r\|_{\vlp}|Q(x,t^\frac1m)|^{-1}
\|f\|_{{\rm BMO}_{p(\cdot),L}^{s}(\rn)}\noz\\
&&\quad\quad+\sum_{i=1}^\fz2^{-\frac{n+\vez}mi}v^{-\frac nm}\int_{Q(x,(2^iv)^{\frac1m})}
|f(y)-P_{s,v}f(y)|\,dy,\noz
\end{eqnarray}
where, for each $i\in\nn$, $S_i:=Q(x,(2^iv)^{\frac1m})\backslash
Q(x,(2^{i-1}v)^{\frac1m})$.
Notice that, for any $i\in\nn$, there exists a collection
$\{Q_{i,j}\}_{j=1}^{N_i}$ of cubes with $N_i\sim 2^{ni/m}$ such that $\ell(Q_{i,j})=v^{1/m}$
and $Q(x,(2^iv)^{1/m})\subset\bigcup_{j=1}^{N_i}Q_{i,j}$.
Thus, by the H\"older inequality and Lemma \ref{l-bigsball}, we find that
\begin{eqnarray*}
\quad\int_{Q(x,(2^iv)^{\frac1m})}|f(y)-P_{s,v}f(y)|\,dy
&&\le\sum_{j=1}^{N_i}\int_{Q_{i,j}}|f(y)-P_{s,v}f(y)|\,dy\\
&&\ls\sum_{j=1}^{N_i}\lf\{\int_{Q_{i,j}}
|f(y)-P_{s,v}f(y)|^2\,dy\r\}^{\frac12}|Q_{i,j}|^{\frac12}\noz\\
&&\ls\|f\|_{{\rm BMO}_{p(\cdot),L}^{s}(\rn)}\sum_{j=1}^{N_i}
\|\chi_{Q_{i,j}}\|_{\vlp}\noz\\
&&\ls 2^{\frac nmi}2^{-\frac{n}{m}(\frac1{p_+}-\frac1{p_-})i}
\lf\|\chi_{Q(x,v^{\frac1m})}\r\|_{\vlp},\noz
\end{eqnarray*}
which, together with Lemma \ref{l-bigsball} again and \eqref{bmo1y},
implies that
\begin{eqnarray*}
&&\sum_{i=1}^\fz2^{-i\frac{n+\vez}m}v^{-\frac nm}\int_{Q(x,(2^iv)^{\frac1m})}
|f(y)-P_{s,v}f(y)|\,dy\\
&&\hs\ls v^{-\frac nm}\sum_{i=1}^\fz2^{-\frac nm[\frac\vez n+\frac1{p_+}-\frac1{p_-}]i}
\lf\|\chi_{Q(x,t^{\frac1m})}\r\|_{\vlp}
\sim\lf\|\chi_{Q(x,t^{\frac1m})}\r\|_{\vlp}|Q(x,t^{\frac1m})|^{-1}.
\end{eqnarray*}
By this and \eqref{bmo2}, we further conclude that, for all $x\in\rn$,
\begin{equation}\label{bmo3}
|P_{s,t}(f-P_{s,v}f)(x)|
\ls\lf\|\chi_{Q(x,t^{\frac1m})}\r\|_{\vlp}|Q(x,t^{\frac1m})|^{-1}.
\end{equation}

By an argument similar to that used in the proof of \eqref{bmo3},
we also see that, for all $x\in\rn$,
\begin{equation*}
|P_{s,v}(f-P_{s,t}f)(x)|\ls \lf\|\chi_{Q(x,t^{\frac1m})}\r\|_{\vlp}|Q(x,t^{\frac1m})|^{-1},
\end{equation*}
which, combined with \eqref{bmo2x} and \eqref{bmo3}, implies that \eqref{bmo1} holds
true. This finishes the proof of Proposition \ref{p-bmo1}.
\end{proof}

\begin{proposition}\label{p-bmo2}
Let $p(\cdot)$ and $s$ be as in Proposition \ref{p-bmo1}. Then, for
any $\delta\in(n[\frac 1{p_-}-1],\fz)$, there exists a positive constant $C$ such that,
for all $f\in {\rm BMO}_{p(\cdot),L}^{s}(\rn)$, $t\in(0,\fz)$ and $x\in\rn$,
\begin{equation*}
\int_{\rn}\frac{|f(y)-P_{s,t}f(y)|}{(t^\frac1m+|x-y|)^{n+\delta}}\,dy
\le C t^{-\frac{n+\delta}{m}}\lf\|\chi_{Q(x,t^{\frac1m})}\r\|_{\vlp}
\|f\|_{{\rm BMO}_{p(\cdot),L}^{s}(\rn)}.
\end{equation*}
\end{proposition}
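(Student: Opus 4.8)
The plan is to split the integral over $\rn$ into the central cube $Q:=Q(x,t^{1/m})$ and the annuli $S_i:=Q(x,(2^it^{1/m}))\setminus Q(x,(2^{i-1}t^{1/m}))$ for $i\in\nn$, exactly as in the proof of Proposition \ref{p-bmo1}. On the central region $Q$, where $t^{1/m}+|x-y|\sim t^{1/m}$, I would pull out $(t^{1/m})^{-(n+\dz)}$ and bound $\int_Q|f(y)-P_{s,t}f(y)|\,dy$ by $|Q|^{1/2}\{\int_Q|f-P_{s,t}f|^2\,dy\}^{1/2}$ via the Cauchy--Schwarz inequality; by the very definition of $\|\cdot\|_{{\rm BMO}_{p(\cdot),L}^{s}(\rn)}$ this is $\ls |Q|^{1/2}\cdot|Q|^{-1/2}\|\chi_Q\|_{\vlp}\|f\|_{{\rm BMO}_{p(\cdot),L}^{s}(\rn)}=\|\chi_Q\|_{\vlp}\|f\|_{{\rm BMO}_{p(\cdot),L}^{s}(\rn)}$, which after multiplication by $(t^{1/m})^{-(n+\dz)}$ gives the desired bound.

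On the annulus $S_i$, I would use $t^{1/m}+|x-y|\gtrsim (2^it^{1/m})$ to gain the factor $2^{-i(n+\dz)}(t^{1/m})^{-(n+\dz)}$, and then I need to control $\int_{Q(x,(2^it)^{1/m})}|f(y)-P_{s,t}f(y)|\,dy$. Here the key is that the BMO-norm is defined with respect to the \emph{scale-$t$} averaging operator $P_{s,t}$ (not $P_{s,(2^it)}$), so I must compare $P_{s,t}f$ to the "correct" local averaging on each subcube. I would tile $Q(x,(2^it)^{1/m})$ by $N_i\sim 2^{ni/m}$ cubes $\{Q_{i,j}\}$ of side length $t^{1/m}$, write $f-P_{s,t}f=(f-P_{s,t}f)$ directly on each $Q_{i,j}$, apply Cauchy--Schwarz on each $Q_{i,j}$ to get $\sum_j |Q_{i,j}|^{1/2}\{\int_{Q_{i,j}}|f-P_{s,t}f|^2\}^{1/2}$, and then bound $\{\int_{Q_{i,j}}|f-P_{s,t}f|^2\}^{1/2}$ using the BMO-norm against the slightly \emph{larger} cube that is concentric with $Q_{i,j}$ but has the property that its defining scale matches $t$: since $\ell(Q_{i,j})=t^{1/m}$, the BMO-definition applied to $Q_{i,j}$ itself gives $\{\int_{Q_{i,j}}|f-P_{s,(\ell(Q_{i,j}))^m}f|^2\}^{1/2}=\{\int_{Q_{i,j}}|f-P_{s,t}f|^2\}^{1/2}\ls |Q_{i,j}|^{-1/2}\|\chi_{Q_{i,j}}\|_{\vlp}\|f\|_{{\rm BMO}_{p(\cdot),L}^{s}(\rn)}$. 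Summing over $j$ and invoking Lemma \ref{l-bigsball} to compare $\sum_j\|\chi_{Q_{i,j}}\|_{\vlp}$ with $\|\chi_{Q(x,(2^it)^{1/m})}\|_{\vlp}$ (which costs a factor $2^{(n/m)(1/p_+-1/p_-)i}$, up to $N_i\sim 2^{ni/m}$ terms), and then comparing back to $\|\chi_Q\|_{\vlp}$ (another application of Lemma \ref{l-bigsball}, costing at most $2^{(n/m)(1/p_-)i}$), I obtain that the $i$-th annular contribution is $\ls 2^{-i[n+\dz-n/p_-]}\cdot(t^{1/m})^{-(n+\dz)}\|\chi_Q\|_{\vlp}\|f\|_{{\rm BMO}_{p(\cdot),L}^{s}(\rn)}$ up to harmless lower-order powers of $2^i$.

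Finally I sum over $i\in\nn$: the series $\sum_i 2^{-i[n+\dz-n/p_-+o(1)]}$ converges precisely because $\dz>n(1/p_-)-n=n[1/p_--1]$, which is exactly the hypothesis, so everything collapses to $(t^{1/m})^{-(n+\dz)}\|\chi_Q\|_{\vlp}\|f\|_{{\rm BMO}_{p(\cdot),L}^{s}(\rn)}=t^{-(n+\dz)/m}\|\chi_{Q(x,t^{1/m})}\|_{\vlp}\|f\|_{{\rm BMO}_{p(\cdot),L}^{s}(\rn)}$, as claimed. The main obstacle, and the place where care is genuinely needed, is the bookkeeping of the $\|\chi_Q\|_{\vlp}$-factors across the two nested applications of Lemma \ref{l-bigsball}: because $\|\cdot\|_{\vlp}$ is not translation invariant, one cannot simply say $\|\chi_{Q_{i,j}}\|_{\vlp}\sim\|\chi_Q\|_{\vlp}$ for all the translated unit-scale subcubes $Q_{i,j}$, and one must route the comparison through the common enlargement $Q(x,(2^it)^{1/m})$ and then down to $Q$, tracking the exponents $1/p_-$ and $1/p_+$ on the two sides of Lemma \ref{l-bigsball} to make sure the final exponent of $2^i$ stays strictly negative under the hypothesis $\dz>n[1/p_--1]$. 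This is entirely parallel to the estimate \eqref{bmo2} in the proof of Proposition \ref{p-bmo1}, so I would indicate the modifications rather than repeat the computation in full.
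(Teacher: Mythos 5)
Your argument is correct, and it reaches the same conclusion as the paper, but the route through the annular part is genuinely different. The paper handles the off-diagonal integral by writing, on the $k$-th annulus,
\[
f(y)-P_{s,t}f(y)=\bigl[f(y)-P_{s,2^kt}f(y)\bigr]+\bigl[P_{s,2^kt}f(y)-P_{s,t}f(y)\bigr],
\]
then applies the BMO definition directly on the full cube $Q(x,(2^kt)^{1/m})$ for the first bracket (since the scale $2^kt$ now matches the side length), and invokes Proposition~\ref{p-bmo1} to control the uniform size of the second bracket on that cube. You instead avoid Proposition~\ref{p-bmo1} entirely and work with the \emph{original} scale $t$ throughout, tiling $Q(x,(2^it)^{1/m})$ by $N_i\sim 2^{ni/m}$ cubes $Q_{i,j}$ of side length exactly $t^{1/m}$ so that the BMO definition applied to each $Q_{i,j}$ directly produces $P_{s,t}f$; you then sum the $\|\chi_{Q_{i,j}}\|_{\vlp}$ factors via two applications of Lemma~\ref{l-bigsball} (up through the common enlargement and back down to $Q(x,t^{1/m})$), picking up $2^{\frac{ni}{m}(1-\frac1{p_+}+\frac1{p_-})}$, which is dominated by the decay $2^{-\frac{(n+\delta)i}{m}}$ precisely when $\delta>n(\frac1{p_-}-\frac1{p_+})$, a consequence of $\delta>n(\frac1{p_-}-1)$ and $p_+\le1$. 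Your tiling-and-summation step is, in fact, exactly the computation the paper performs \emph{inside} the proof of Proposition~\ref{p-bmo1} (the estimate following \eqref{bmo2}), so the two routes are closely related: the paper packages that computation into Proposition~\ref{p-bmo1} and reuses it here, while you inline it. Your version is more self-contained; the paper's is more economical given that Proposition~\ref{p-bmo1} has already been proved. Both give the same exponent bookkeeping and the same sufficient condition on $\delta$.
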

\begin{proof}
For all $t\in(0,\fz)$ and $x\in\rn$, we write
$$\int_{\rn}\frac{|f(y)-P_{s,t}f(y)|}{(t^\frac1m+|x-y|)^{n+\delta}}\,dy
=\int_{Q(x,t^{\frac1m})}\frac{|f(y)-P_{s,t}f(y)|}{(t^\frac1m+|x-y|)^{n+\delta}}\,dy
+\int_{[Q(x,t^{\frac1m})]^\complement}\cdots=:{\rm I}_1+{\rm I}_2.$$
Obviously, by the H\"older inequality, we easily see that
$${\rm I}_1\ls t^{-\frac{n+\delta}{m}}\int_{Q(x,t^\frac1m)}|f(y)-P_{s,t}f(y)|\,dy
\ls t^{-\frac{n+\delta}m}\lf\|\chi_{Q(x,t^{\frac1m})}\r\|_{\vlp}
\|f\|_{{\rm BMO}_{p(\cdot),L}^{s}(\rn)}.$$

To estimate ${\rm I}_2$, we first notice that
\begin{eqnarray}\label{bmo4x}
{\rm I}_2
&&\ls\sum_{k=1}^\fz(2^kt)^{-\frac{n+\delta}{m}}
\int_{Q(x,(2^kt)^{\frac 1m})}|f(y)-P_{s,t}f(y)|\,dy\\
&&\ls\sum_{k=1}^\fz(2^kt)^{-\frac{n+\delta}{m}}
\int_{Q(x,(2^kt)^{\frac 1m})}|f(y)-P_{s,2^kt}f(y)|\,dy\noz\\
&&\quad\quad+\sum_{k=1}^\fz(2^kt)^{-\frac{n+\delta} m}\sup_{y\in Q(x,(2^kt)^{\frac1m})}
|P_{s,t}f(y)-P_{s,2^kt}f(y)||Q(x,(2^kt)^{\frac1m})|\noz\\
&&=:{\rm I}_{2,1}+{\rm I}_{2,2}.\noz
\end{eqnarray}
By the H\"older inequality, Lemma \ref{l-bigsball}
and the fact that $\delta>n(\frac 1{p_-}-1)$, we find that
\begin{eqnarray}\label{bmo4y}
{\rm I}_{2,1}
&&\ls\sum_{k=1}^\fz(2^kt)^{-\frac{n+\delta}m}
\lf\|\chi_{Q(x,(2^kt)^{\frac1m})}\r\|_{\vlp}
\|f\|_{{\rm BMO}_{p(\cdot),L}^{s}(\rn)}\\
&&\ls t^{-\frac{n+\delta}m}\lf\|\chi_{Q(x,t^{\frac1m})}\r\|_{\vlp}
\|f\|_{{\rm BMO}_{p(\cdot),L}^{s}(\rn)}\sum_{k=1}^\fz 2^{-k(n+\delta-\frac n{p_-})}\noz\\
&&\ls t^{-\frac{n+\delta}m}\lf\|\chi_{Q(x,t^{\frac1m})}\r\|_{\vlp}
\|f\|_{{\rm BMO}_{p(\cdot),L}^{s}(\rn)}.\noz
\end{eqnarray}
For ${\rm I}_{2,2}$, by Proposition \ref{p-bmo1} and Lemma \ref{l-bigsball},
we know that
\begin{eqnarray*}
{\rm I}_{2,2}
&&\ls \sum_{k=1}^\fz k(2^kt)^{-\frac{n+\delta}m}
\sup_{y\in Q(x,(2^kt)^{\frac1m})}\lf\|\chi_{Q(y,(2^kt)^{\frac1m})}\r\|_{\vlp}
\|f\|_{{\rm BMO}_{p(\cdot),L}^{s}(\rn)}\\
&&\ls \sum_{k=1}^\fz k(2^kt)^{-\frac{n+\delta}m}
\lf\|\chi_{Q(x,(2^kt)^{\frac1m})}\r\|_{\vlp}
\|f\|_{{\rm BMO}_{p(\cdot),L}^{s}(\rn)}\\
&&\ls t^{-\frac{n+\delta}m}\lf\|\chi_{Q(x,t^{\frac1m})}\r\|_{\vlp}
\|f\|_{{\rm BMO}_{p(\cdot),L}^{s}(\rn)}.
\end{eqnarray*}
This, together with \eqref{bmo4x} and \eqref{bmo4y}, implies that
\begin{eqnarray*}
{\rm I}_2\ls
t^{-\frac{n+\delta} m}\lf\|\chi_{Q(x,t^{\frac1m})}\r\|_{\vlp}
\|f\|_{{\rm BMO}_{p(\cdot),L}^{s}(\rn)},
\end{eqnarray*}
which completes the proof of Proposition \ref{p-bmo2}.
\end{proof}

Let $p(\cdot)\in\cp(\rn)$. Recall that a measure $d\mu$ on $\urn$ is
called a \emph{$p(\cdot)$-Carleson measure} if
\begin{equation*}
\|d\mu\|_{p(\cdot)}:=\sup_{Q\subset\rn}
\frac{|Q|^{1/2}}{\|\chi_Q\|_{\vlp}}\lf\{\int_{\wh Q}\,d|\mu|\r\}^{1/2}<\fz,
\end{equation*}
where the supremum is taken over all cubes $Q$ of $\rn$ and $\wh Q$ denotes the tent
over $Q$; see \cite{zyl14}.

\begin{proposition}\label{p-carleson}
Let $p(\cdot)$, $s_0$ and $s$ be as in Proposition \ref{p-bmo1}.
If $f\in {\rm BMO}_{p(\cdot),L}^{s_0}(\rn)$, then the measure
$$d\mu_f(x,t):=|Q_{s,t^m}(I-P_{s_0,t^m})f(x)|^2\,\frac{dxdt}{t},
\quad\forall\ (x,t)\in\rr^{n+1}_+$$
is a $p(\cdot)$-Carleson measure on $\urn$ and there exists a positive constant $C$,
independent of $f$, such that
$\|d\mu_f\|_{p(\cdot)}\le C\|f\|_{{\rm BMO}_{p(\cdot),L}^{s_0}(\rn)}.$
\end{proposition}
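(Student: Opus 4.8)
The plan is to fix a cube $Q\st\rn$ and estimate $\int_{\wh Q}|Q_{s,t^m}(I-P_{s_0,t^m})f(x)|^2\,\frac{dxdt}t$, showing it is bounded by $[\|\chi_Q\|_{\vlp}|Q|^{-1/2}]^2\|f\|_{{\rm BMO}_{p(\cdot),L}^{s_0}(\rn)}^2$, which is exactly what the definition of $\|d\mu_f\|_{p(\cdot)}$ requires. As usual in this circle of ideas (cf. \cite{dy05,yan08,jyz09}), I would split $f$ relative to the dilate $2Q$ by writing $f-P_{s_0,(r_Q)^m}f=(f-P_{s_0,(r_Q)^m}f)\chi_{2Q}+(f-P_{s_0,(r_Q)^m}f)\chi_{(2Q)^\complement}=:f_1+f_2$, and noting that on $\wh Q$ the operator $Q_{s,t^m}(I-P_{s_0,t^m})$ annihilates the constant-in-the-kernel part so that $Q_{s,t^m}(I-P_{s_0,t^m})f=Q_{s,t^m}(I-P_{s_0,t^m})(f_1)+Q_{s,t^m}(I-P_{s_0,t^m})(f_2)$ up to the $P_{s_0,(r_Q)^m}f$ correction which must be tracked separately.

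For the local term $f_1$, I would use the $L^2$-boundedness of the family $\{Q_{s,t^m}(I-P_{s_0,t^m})\}$ together with the quadratic estimate (the square function bound coming from Assumption~(B), cf. Remark after Assumption~(B)), namely $\int_0^\fz\|Q_{s,t^m}(I-P_{s_0,t^m})h\|_{L^2(\rn)}^2\,\frac{dt}t\ls\|h\|_{L^2(\rn)}^2$ for $h\in L^2(\rn)$; this reduces $\int_{\wh Q}|Q_{s,t^m}(I-P_{s_0,t^m})f_1|^2\,\frac{dxdt}t$ to $\|f_1\|_{L^2(\rn)}^2=\int_{2Q}|f(y)-P_{s_0,(r_Q)^m}f(y)|^2\,dy$, which by the definition of ${\rm BMO}_{p(\cdot),L}^{s_0}(\rn)$ applied to the cube $2Q$ (and Lemma~\ref{l-bigsball} to pass from $\|\chi_{2Q}\|_{\vlp}$ to $\|\chi_{Q}\|_{\vlp}$) is controlled by $|Q|^{-1}\|\chi_Q\|_{\vlp}^2\|f\|_{{\rm BMO}_{p(\cdot),L}^{s_0}(\rn)}^2$, as needed. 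For the far term $f_2$, since $(x,t)\in\wh Q$ forces $t\le r_Q$ while the kernel of $Q_{s,t^m}(I-P_{s_0,t^m})$ enjoys the pointwise decay from \eqref{operator-k} (and the cancellation improving it), I would dominate the kernel against $y\mapsto (t^{1/m}+|x-y|)^{-(n+\delta)}$ for a suitable $\delta\in(n[\frac1{p_-}-1],\theta(L))$ and then invoke Proposition~\ref{p-bmo2} (with $t$ there taken to be $(r_Q)^m$) to bound $\int_{(2Q)^\complement}\frac{|f(y)-P_{s_0,(r_Q)^m}f(y)|}{(t^{1/m}+|x-y|)^{n+\delta}}\,dy$; integrating the resulting power of $t$ over $(0,r_Q)$ and $x$ over $Q$ again produces the desired bound $|Q|^{-1}\|\chi_Q\|_{\vlp}^2\|f\|_{{\rm BMO}_{p(\cdot),L}^{s_0}(\rn)}^2$.

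Finally the correction coming from $P_{s_0,(r_Q)^m}f$ must be handled: here $Q_{s,t^m}(I-P_{s_0,t^m})$ applied to $P_{s_0,(r_Q)^m}f$ is a difference of semigroup-type operators evaluated at comparable scales $t\sim r_Q$, so Proposition~\ref{p-bmo1} (the logarithmic/non-logarithmic modulus-of-continuity estimate for $P_{s,t}-P_{s,Kt}$, here with $K$ bounded) gives the pointwise control $|Q_{s,t^m}(I-P_{s_0,t^m})P_{s_0,(r_Q)^m}f(x)|\ls |Q|^{-1}\|\chi_Q\|_{\vlp}\|f\|_{{\rm BMO}_{p(\cdot),L}^{s_0}(\rn)}$ for $(x,t)\in\wh Q$, and integrating $d x\,\frac{dt}t$ over $\wh Q$ (which has finite $\frac{dt}t$-mass in $t\in(0,r_Q)$ only after using that the integrand vanishes suitably as $t\to0$; in practice one splits $t\in(0,r_Q)$ at $t\sim r_Q$ and uses the semigroup cancellation $I-P_{s_0,t^m}=O(t^{m})$ for small $t$ to gain integrability) yields the same bound.

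\textbf{Main obstacle.} The delicate point is the bookkeeping at the transition scale $t\sim r_Q$: one must simultaneously exploit the cancellation of $I-e^{-t^mL}$ for small $t$ (to make $\int_0^{r_Q}\cdots\frac{dt}t$ converge at $0$) and the kernel decay for the off-diagonal part, while all estimates are weighted by $\|\chi_Q\|_{\vlp}$ rather than $|Q|^{1/2}$, so every application of a size estimate for $\|f\|_{{\rm BMO}_{p(\cdot),L}^{s_0}(\rn)}$ on a dilated cube has to be converted back via Lemma~\ref{l-bigsball}, and the exponents $\frac1{p_-},\frac1{p_+}$ appearing there must be checked to close against the chosen $\delta$ and $\vez$ exactly as in the proofs of Propositions~\ref{p-bmo1} and~\ref{p-bmo2}. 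I expect this to be the only place requiring genuine care; the rest is a routine combination of the $L^2$ quadratic estimate, \eqref{operator-k}, and Propositions~\ref{p-bmo1} and~\ref{p-bmo2}.
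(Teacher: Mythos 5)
Your overall decomposition is essentially the paper's: you split $f$ into $f_1=(I-P_{s_0,(r_Q)^m})f\,\chi_{2Q}$, $f_2=(I-P_{s_0,(r_Q)^m})f\,\chi_{(2Q)^\complement}$ and a ``correction'' coming from $P_{s_0,(r_Q)^m}f$, which by commutativity is the same three-way split as \eqref{carleson-2}--\eqref{carleson-3}. Your treatment of $f_1$ (reduce to $\|f_1\|_{L^2}^2$ via the quadratic estimate, then the BMO definition plus Lemma~\ref{l-bigsball}) and of $f_2$ (kernel decay plus Proposition~\ref{p-bmo2} at scale $r_Q$, giving a factor $(t/r_Q)^\vez$) matches the paper. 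One small omission in the $f_1$ step: the BMO semi-norm on $2Q$ controls $\|(I-P_{s_0,(2r_Q)^m})f\|_{L^2(2Q)}$, not $\|(I-P_{s_0,(r_Q)^m})f\|_{L^2(2Q)}$, so you do need Proposition~\ref{p-bmo1} (with $K=2^m$) to pass between these two, which you did not mention.

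The genuine gap is in the correction term $Q_{s,t^m}(I-P_{s_0,t^m})P_{s_0,(r_Q)^m}f$. First, the operators are not ``at comparable scales'' on $\wh Q$: there $t$ ranges over all of $(0,r_Q)$, so for small $t$ the two scales are wildly different, and Proposition~\ref{p-bmo1} simply does not apply -- it bounds $P_{s,t}f-P_{s,Kt}f$, whereas $Q_{s,t^m}(I-P_{s_0,t^m})P_{s_0,(r_Q)^m}$ is a product of three operators, not a difference of two $P$'s. Second, even your proposed pointwise bound $\ls|Q|^{-1}\|\chi_Q\|_{\vlp}\|f\|_{{\rm BMO}}$, uniform in $t$, does not close: the $dt/t$ integral over $(0,r_Q)$ would diverge at $t\to0$. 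You do flag this, but the fix ``$I-P_{s_0,t^m}=O(t^m)$'' is too vague (in what topology, applied to what?) and in any case would produce only a power $t^m$, not the needed $t^{m(s+1)}$. The paper's actual mechanism is different and is the heart of the argument: it regroups as $Q_{s,t^m}P_{s_0,(r_R)^m}(I-P_{s_0,t^m})f$, uses the explicit binomial expansion \eqref{carleson-6x3} and the $(s+1)$-st time-derivative kernel bounds \eqref{carleson-6x1}--\eqref{carleson-6x2} to show the kernel of $Q_{s,t^m}P_{s_0,(r_R)^m}$ carries a factor $(t/r_R)^{m(s+1)}$ against the decay $(r_R+t+|x-y|)^{-(n+\vez)}$, and then applies Proposition~\ref{p-bmo2} to $(I-P_{s_0,t^m})f$; the exponent bookkeeping $m(s+1)>\vez$ (guaranteed by $s\ge s_0$ and the choice of $\vez$) is exactly what makes $\int_0^{r_Q}(t/r_Q)^{2[m(s+1)-\vez]}\,\frac{dt}t$ converge. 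Without this you have not produced any positive power of $t/r_Q$ in the third term, so the Carleson bound does not follow.
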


\begin{proof}
Since $s\ge s_0= \lfloor \frac nm(\frac1{p_-}-1)\rfloor$ and $\theta(L)\in (n[\frac1{p_-}-1],\fz)$
with $\theta(L)$ as in \eqref{2.3x},
it follows that there exists $\vez\in(n[\frac1{p_-}-1],\theta(L))$
such that $m(s+1)>\vez$.
To prove this proposition, by definition, it suffices to show that, for any cube
$R:=R(x_R,r_R)$ for some $x_R\in\rn$ and $r_R\in(0,\fz)$,
\begin{equation}\label{carleson-1}
\frac{|R|^{1/2}}{\|\chi_R\|_{\vlp}}
\lf\{\int_{\wh R}|Q_{s,t^m}(I-P_{s_0,t^m})f(x)|^2\,\frac{dxdt}{t}\r\}^{1/2}
\ls\|f\|_{{\rm BMO}_{p(\cdot),L}^{s_0}(\rn)}.
\end{equation}

Observe that
$$I-P_{s_0,t^m}=(I-P_{s_0,t^m})\lf[I-P_{s_0,(r_R)^m}\r]+P_{s_0,(r_R)^m}(I-P_{s_0,t^m}).$$
Then the estimate \eqref{carleson-1} is a direct consequence of
\begin{eqnarray}\label{carleson-2}
&&\frac{|R|^{1/2}}{\|\chi_R\|_{\vlp}}
\lf\{\int_{\wh R}\lf|Q_{s,t^m}(I-P_{s_0,t^m})
\lf[I-P_{s_0,(r_R)^m}\r]f(x)\r|^2\,\frac{dxdt}{t}\r\}^{1/2}\\
&&\hs\ls\|f\|_{{\rm BMO}_{p(\cdot),L}^{s_0}(\rn)}\noz
\end{eqnarray}
and
\begin{equation}\label{carleson-3}
\frac{|R|^{1/2}}{\|\chi_R\|_{\vlp}}
\lf\{\int_{\wh R}|Q_{s,t^m}P_{s_0,(r_R)^m}(I-P_{s_0,t^m})f(x)|^2\,\frac{dxdt}{t}\r\}^{1/2}
\ls\|f\|_{{\rm BMO}_{p(\cdot),L}^{s_0}(\rn)}.
\end{equation}

Next we prove \eqref{carleson-2} and \eqref{carleson-3}, respectively.
To show \eqref{carleson-2}, let
$$b_1:=\lf[I-P_{s_0,(r_R)^m}\r]f\chi_{2R}\quad {\rm and}\quad
b_2:=\lf[I-P_{s_0,(r_R)^m}\r]f\chi_{\rn\backslash(2R)}.$$
By \cite[(4.25)]{jyz09}, we know that
\begin{eqnarray}\label{carleson-4}
{\rm J}:=\lf\{\int_{\wh R}|Q_{s,t^m}(I-P_{s_0,t^m})b_1(x)|^2\,
\frac{dxdt}{t}\r\}^{1/2}
\ls\|b_1\|_{L^2(\rn)},
\end{eqnarray}
which, combined with Proposition \ref{p-bmo1} and Lemma \ref{l-bigsball}, implies that
\begin{eqnarray*}
{\rm J}
&&\ls\lf\{\int_{2R}\lf|\lf[I-P_{s_0,(2r_R)^m}\r]f(x)\r|^2\,dx\r\}^{\frac12}\\
&&\quad+|R|^{1/2}\sup_{x\in 2R}|P_{s_0,(r_R)^m}f(x)-P_{s_0,(2r_R)^m}f(x)|\\
&&\ls|R|^{-1/2}\|\chi_R\|_{\vlp}\|f\|_{{\rm BMO}_{p(\cdot),L}^{s_0}(\rn)}.
\end{eqnarray*}
For $b_2$, we write
\begin{equation}\label{carleson-5}
Q_{s,t^m}(I-P_{s_0,t^m})b_2
=Q_{s,t^m}b_2-Q_{s,t^m}P_{s_0,t^m}b_2.
\end{equation}
Let $(x,t)\in \wh R$. Then, by \eqref{assump2}, \eqref{operator-k}
and Proposition \ref{p-bmo2}, we have
\begin{eqnarray*}
|Q_{s,t^m}b_2(x)|
&&\ls\int_{\rn\backslash(2R)}\frac{t^\vez}{|x-y|^{n+\vez}}
\lf|\lf[I-P_{s_0,(r_R)^m}\r]f(y)\r|\,dy\\
&&\ls\int_\rn\frac{t^\vez}{(r_R+|x-y|)^{n+\vez}}\lf|\lf[I-P_{s_0,(r_R)^m}\r]f(y)\r|\,dy\\
&&\ls (t/r_R)^\vez|R|^{-1}\|\chi_R\|_{\vlp}\|f\|_{{\rm BMO}_{p(\cdot),L}^{s_0}(\rn)},
\end{eqnarray*}
which implies that
\begin{eqnarray}\label{carleson-6}
\lf\{\int_{\wh R}|Q_{s,t^m}b_2(x)|^2\,\frac{dxdt}{t}\r\}^{1/2}
\ls|R|^{-1/2}\|\chi_R\|_{\vlp}\|f\|_{{\rm BMO}_{p(\cdot),L}^{s_0}(\rn)}.
\end{eqnarray}
On the other hand, for all $k\in\{1,\dots,s_0+1\}$, $t,\, v\in(0,\fz)$,
$f\in\cm(\rn)$ and $x\in\rn$,
let
\begin{equation}\label{carleson-6x1}
\Psi_{t,v}^kf(x):=[kv^m+t^m]^{s+1}
\lf(\lf.\frac{d^{s+1}P_\eta}{d\eta^{s+1}}\r|_{kv^m+t^m}f\r)(x).
\end{equation}
Then, by Assumption \ref{as-a} and \eqref{partial1}, we conclude that $\psi_{t,v}^k$, the kernel of
$\Psi_{t,v}^k$, satisfies that, for all $t,\, v\in(0,\fz)$ and $x,\ y\in\rn$,
\begin{equation}\label{carleson-6x2}
|\psi_{t,v}^k(x,y)|\ls \frac{v^\vez}{(v+t+|x-y|)^{n+\vez}}.
\end{equation}
From this, Proposition \ref{p-bmo2} and the fact that
\begin{equation}\label{carleson-6x3}
P_{s_0,v^m}f=\sum_{k=1}^{s_0+1}(-1)^{k+1}C_{s_0+1}^ke^{-kv^mL}f,
\end{equation}
where $C_{s_0+1}^k:=\frac{(s_0+1)!}{k!(s_0+1-k)!}$,
we deduce that, for all $(x,t)\in\wh R$,
\begin{eqnarray}\label{carleson-6x}
|Q_{s,t^m}P_{s_0,t^m}b_2(x)|
&&=\lf|\sum_{k=1}^{s_0+1}(-1)^kC_{s_0+1}^k\frac{t^{m(s+1)}}{[(k+1)t^m]^{s+1}}
\Psi_{t,t}^kb_2(x)\r|\\
&&\ls\int_{\rn\backslash(2R)}\frac{t^\vez}{(t+|x-y|)^{n+\vez}}
\lf|\lf[I-P_{s_0,(r_R)^m}\r]f(y)\r|\,dy\noz\\
&&\ls\lf(\frac{t}{r_R}\r)^{\vez}\int_{\rn}\frac{(r_R)^\vez}{(r_R+|x-y|)^{n+\vez}}
\lf|\lf[I-P_{s_0,(r_R)^m}\r]f(y)\r|\,dy\noz\\
&&\ls\lf({t}/{r_R}\r)^{\vez}\|\chi_R\|_{\vlp}
\|f\|_{{\rm BMO}_{p(\cdot),L}^{s_0}(\rn)},\noz
\end{eqnarray}
which further implies that
\begin{equation}\label{carleson-7}
\lf\{\int_{\wh R}|Q_{s,t^m}P_{s_0,t^m}b_2(x)|^2\,\frac{dxdt}{t}\r\}^{1/2}
\ls|R|^{-1/2}\|\chi_R\|_{\vlp}\|f\|_{{\rm BMO}_{p(\cdot),L}^{s_0}(\rn)}.
\end{equation}

Combining \eqref{carleson-4}, \eqref{carleson-5}, \eqref{carleson-6} and \eqref{carleson-7},
we conclude that \eqref{carleson-2} holds true.

Similarly, by \eqref{carleson-6x1}, \eqref{carleson-6x2}, \eqref{carleson-6x3}
and Proposition \ref{p-bmo2},
we also see that, for all $(x,t)\in \wh R$,
\begin{eqnarray*}
&&\lf|Q_{s,t^m}P_{s_0,(r_R)^m}(I-P_{s_0,t^m})(f)(x)\r|\\
&&\hs=\lf|\sum_{k=1}^{s_0+1}(-1)^kC_{s_0+1}^k\frac{t^{m(s+1)}}{[k(r_R)^m+t^m]^{s+1}}
\Psi_{t,r_R}^k(I-P_{s_0,t^m})(f)(x)\r|\\
&&\hs\ls\frac{t^{m(s+1)}}{[k(r_R)^m+t^m]^{s+1}}\int_{\rn}\frac{(r_R)^\vez}
{(r_R+t+|x-y|)^{n+\vez}}|(I-P_{s_0,t^m})(f)(y)|\,dy\\
&&\hs\ls\lf(\frac{t}{r_R}\r)^{m(s+1)-\vez}\int_\rn\frac{t^\vez}{(t+|x-y|)^{n+\vez}}
|(I-P_{s_0,t^m})(f)(r)|\,dy\\
&&\hs \ls t^{-n}\lf(\frac{t}{r_R}\r)^{m(s+1)-\vez}
\|\chi_{Q(x,t)}\|_{\vlp}\|f\|_{{\rm BMO}_{p(\cdot),L}^s(\rn)},
\end{eqnarray*}
which, together with Lemma \ref{l-bigsball}, implies that, for all $(x,t)\in \wh R$,
\begin{equation*}
\lf|Q_{s,t^m}P_{s_0,(r_R)^m}(I-P_{s_0,t^m})(f)(x)\r|
\ls t^{-n}\lf(\frac{t}{r_R}\r)^{m(s+1)-\vez}
\|\chi_{R}\|_{\vlp}\|f\|_{{\rm BMO}_{p(\cdot),L}^s(\rn)}.
\end{equation*}
By this and the fact that $m(s+1)>\vez$, we further conclude that
\eqref{carleson-3} holds true.
This finishes the proof of Proposition \ref{p-carleson}.
\end{proof}

\begin{proposition}\label{p-dual}
{\rm(i)} Let $q\in(1,\fz)$ and $q^\ast:=\frac q{q-1}$. Then, for all $f\in T_2^q(\urn)$
and $g\in T_2^{q^\ast}(\urn)$,
$$\int_{\urn}|f(y,t)g(y,t)|\,\frac{dydt}{t}
\le \int_\rn\ct(f)(x)\ct(g)(x)\,dx.$$

{\rm(ii)} Let $p(\cdot)\in C^{\log}(\rn)$ satisfy $p_+\in(0,1]$.
Then the dual space of $T_2^{p(\cdot)}(\urn)$ is $T_{2,\fz}^{p(\cdot)}(\urn)$ in the following
sense: for any $h\in T_{2,\fz}^{p(\cdot)}(\urn)$, the mapping
\begin{equation}\label{dual}
\ell_h(f):=\int_{\urn} h(y,t)f(y,t)\,\frac{dydt}{t}
\end{equation}
is a bounded linear functional on $T_2^{p(\cdot)}(\urn)$;
conversely, if $\ell$ is a bounded linear functional on $T_2^{p(\cdot)}(\urn)$,
then $\ell$ has the form as in \eqref{dual} with a unique $h\in T_{2,\fz}^{p(\cdot)}(\urn)$.
Moreover, $\|h\|_{T_{2,\fz}^{p(\cdot)}(\urn)}\sim\|\ell_h\|_{(T_2^{p(\cdot)}(\urn))^\ast}$
with the implicit positive constants independent of $h$.
\end{proposition}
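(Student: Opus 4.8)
\medskip
\noindent\emph{Strategy of proof.}
The two parts are handled separately, part (i) being the variable‑exponent‑free Coifman--Meyer--Stein inequality. For (i) I would first record the elementary identity that, for every nonnegative measurable $F$ on $\urn$, writing $\chi_{\Gamma(x)}(y,t)=\chi_{B(y,t)}(x)$ and interchanging the order of integration yields
$$\int_\rn\lf[\int_{\Gamma(x)}F(y,t)\,\frac{dy\,dt}{t^{n+1}}\r]dx\sim\int_{\urn}F(y,t)\,\frac{dy\,dt}{t},$$
with implicit constants depending only on $n$; applying it with $F:=|fg|$ and then, inside the outer integral, the Cauchy--Schwarz inequality on $\Gamma(x)$ with respect to $\frac{dy\,dt}{t^{n+1}}$, one bounds $\int_{\urn}|f(y,t)g(y,t)|\,\frac{dy\,dt}{t}$ by $\int_\rn\ct(f)(x)\ct(g)(x)\,dx$, and finiteness of the left‑hand side when $f\in T_2^q(\urn)$ and $g\in T_2^{q^\ast}(\urn)$ then follows from H\"older's inequality in $L^q(\rn)$ applied to $\ct(f)$ and $\ct(g)$. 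This is the whole of part (i).

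For the first inclusion of part (ii), given $h\in T_{2,\fz}^{p(\cdot)}(\urn)$ and $f\in T_2^{p(\cdot)}(\urn)$, I would invoke the atomic decomposition $f=\sum_{j\in\nn}\lz_ja_j$ of Corollary \ref{c-tent}, where the $a_j$ are $(p(\cdot),\fz)$‑atoms with $\supp a_j\st\wh Q_j$ and $\ca(\{\lz_j\}_{j\in\nn},\{Q_j\}_{j\in\nn})\ls\|f\|_{T_2^{p(\cdot)}(\urn)}$, and estimate, for each $j$, using Remark \ref{r-atom}(ii), the Cauchy--Schwarz inequality, the size condition for atoms, and the definition of the $T_{2,\fz}^{p(\cdot)}(\urn)$‑norm,
$$\int_{\wh Q_j}|h(y,t)a_j(y,t)|\,\frac{dy\,dt}{t}\le\lf(\int_{\wh Q_j}|h|^2\,\frac{dy\,dt}{t}\r)^{1/2}\|a_j\|_{T_2^2(\urn)}\le\frac{\|\chi_{Q_j}\|_{\vlp}}{|Q_j|^{1/2}}\|h\|_{T_{2,\fz}^{p(\cdot)}(\urn)}\cdot\frac{|Q_j|^{1/2}}{\|\chi_{Q_j}\|_{\vlp}}.$$
Summing over $j$, together with the pointwise bound $|hf|\le\sum_j|\lz_j|\,|ha_j|$ and the monotone convergence theorem, gives $\int_{\urn}|hf|\,\frac{dy\,dt}{t}\le\|h\|_{T_{2,\fz}^{p(\cdot)}(\urn)}\sum_{j\in\nn}|\lz_j|<\fz$, so $\ell_h$ in \eqref{dual} is well defined; the dominated convergence theorem then yields $\ell_h(f)=\sum_{j\in\nn}\lz_j\int_{\urn}ha_j\,\frac{dy\,dt}{t}$, and Remark \ref{r-tent} (available since $p_+\le1$) gives $|\ell_h(f)|\le\|h\|_{T_{2,\fz}^{p(\cdot)}(\urn)}\sum_{j\in\nn}|\lz_j|\ls\|h\|_{T_{2,\fz}^{p(\cdot)}(\urn)}\|f\|_{T_2^{p(\cdot)}(\urn)}$.

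For the converse, given $\ell\in(T_2^{p(\cdot)}(\urn))^\ast$, I would proceed in three steps. First, for a compact $K\st\urn$ choose a cube $Q$ with $K\st\wh Q$; every $g\in T_2^2(\urn)$ with $\supp g\st K$ has $\supp\ct(g)\st Q$, and splitting $\int_\rn[\ct(g)(x)/\lz]^{p(x)}\,dx$ according to whether $\ct(g)(x)\le\lz$ or $\ct(g)(x)>\lz$ and using $p_-\le p(\cdot)\le p_+\le1<2$ together with H\"older's inequality shows $\|g\|_{T_2^{p(\cdot)}(\urn)}\ls C_Q\|g\|_{T_2^2(\urn)}$; hence $\ell$ restricts to a bounded functional on the closed subspace $T_2^2(K)$ of the Hilbert space $L^2(\urn,\frac{dy\,dt}{t})$, and the Riesz representation theorem furnishes $h_K\in T_2^2(K)$ with $\ell(g)=\int_{\urn}g\,h_K\,\frac{dy\,dt}{t}$ there. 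Exhausting $\urn$ by compacts $K_1\st K_2\st\cdots$ and invoking uniqueness in the Riesz theorem, the $h_{K_i}$ patch to a single $h\in L^2_{\loc}(\urn,\frac{dy\,dt}{t})$ with $\ell(g)=\int_{\urn}g\,h\,\frac{dy\,dt}{t}$ for all compactly supported $g\in T_2^2(\urn)$, in particular for all $g\in T_{2,c}^{p(\cdot)}(\urn)$ (Proposition \ref{p-tent}). Second, and this is the crux, I would show $h\in T_{2,\fz}^{p(\cdot)}(\urn)$ with $\|h\|_{T_{2,\fz}^{p(\cdot)}(\urn)}\ls\|\ell\|$: fixing a cube $Q$ and $\ez\in(0,\fz)$ and testing against $g_\ez:=\ov h\,\chi_{\wh Q\cap\{(y,t):\,t>\ez,\,|y|<1/\ez\}}\in T_{2,c}^2(\urn)$ gives $\int_{\wh Q\cap\{t>\ez,\,|y|<1/\ez\}}|h|^2\,\frac{dy\,dt}{t}=\ell(g_\ez)\le\|\ell\|\,\|\ct(g_\ez)\|_{\vlp}$; since $\ct(g_\ez)$ is supported in $Q$, the variable‑exponent H\"older inequality with $\frac1{r(\cdot)}:=\frac1{p(\cdot)}-\frac12$ together with the standard comparison $\|\chi_Q\|_{L^{r(\cdot)}(\rn)}\sim|Q|^{-1/2}\|\chi_Q\|_{\vlp}$ bounds $\|\ct(g_\ez)\|_{\vlp}$ by $\frac{\|\chi_Q\|_{\vlp}}{|Q|^{1/2}}\|\ct(g_\ez)\|_{L^2(\rn)}\sim\frac{\|\chi_Q\|_{\vlp}}{|Q|^{1/2}}\big(\int_{\wh Q\cap\{t>\ez,\,|y|<1/\ez\}}|h|^2\,\frac{dy\,dt}{t}\big)^{1/2}$ by the Fubini identity of part (i); cancelling one power and letting $\ez\to0$ via monotone convergence yields $\frac{|Q|^{1/2}}{\|\chi_Q\|_{\vlp}}\big(\int_{\wh Q}|h|^2\,\frac{dy\,dt}{t}\big)^{1/2}\ls\|\ell\|$, and the supremum over $Q$ gives the bound. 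Third, by the first inclusion $\ell_h$ is bounded on $T_2^{p(\cdot)}(\urn)$ and coincides with $\ell$ on the dense (Corollary \ref{c-tent}) subspace $T_{2,c}^{p(\cdot)}(\urn)$, hence $\ell=\ell_h$ on all of $T_2^{p(\cdot)}(\urn)$; $h$ is unique because $\ell_h=0$ forces $\int_{\urn}g\,h\,\frac{dy\,dt}{t}=0$ for all $g\in T_{2,c}^2(\urn)$, whence $h=0$ almost everywhere. The two norm bounds then give $\|h\|_{T_{2,\fz}^{p(\cdot)}(\urn)}\sim\|\ell_h\|_{(T_2^{p(\cdot)}(\urn))^\ast}$.

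I expect the main obstacle to be the second step of the converse, the Carleson‑type bound for the representing function $h$: because $\|\cdot\|_{\vlp}$ is not translation invariant one cannot reduce to a fixed cube by scaling, so one must extract the precise dependence of $\|\ct(g_\ez)\|_{\vlp}$ on $|Q|$ and on $\|\chi_Q\|_{\vlp}$ uniformly over all cubes $Q$, which is exactly where Lemma \ref{l-bigsball} and the variable‑exponent H\"older inequality enter.
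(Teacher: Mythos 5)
Your proposal is correct and follows the same global architecture as the paper's proof: for the easy inclusion you decompose $f$ into $(p(\cdot),\fz)$-atoms, pair each against $h$ by Cauchy--Schwarz on the tent $\wh Q_j$, and invoke Remark \ref{r-tent} (which uses $p_+\le1$) to control $\sum_j|\lz_j|$; for the converse you exhaust $\urn$ by compacts, obtain a local representing $h\in L^2_\loc(\urn,\frac{dydt}{t})$ by Riesz, and then test $\ell$ against truncations of $\ov h\chi_{\wh Q}$. The one genuine divergence is in the Carleson-type bound for $h$. The paper normalizes the test function $\eta_j:=|Q|^{1/2}\ov h\chi_{R_{Q,j}}/(\|\chi_Q\|_{\vlp}\|h\chi_{R_{Q,j}}\|_{L^2(\urn)})$ so that it becomes a $(p(\cdot),2)$-atom up to a constant multiple and then reads off $\|\eta_j\|_{T_2^{p(\cdot)}(\urn)}\ls1$ from the (already established) atomic side, i.e.\ via Lemma~\ref{l-estimate}/Corollary~\ref{c-tent}; this stays entirely within the paper's toolkit. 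You instead bound $\|\ct(g_\ez)\|_{\vlp}$ directly by the variable-exponent H\"older inequality for $\tfrac1{p(\cdot)}=\tfrac1{r(\cdot)}+\tfrac12$ (valid for $p_+\le1$ by a pointwise Young argument) together with the comparison $\|\chi_Q\|_{L^{r(\cdot)}(\rn)}\sim|Q|^{-1/2}\|\chi_Q\|_{\vlp}$. That comparison is correct and standard for $p(\cdot)\in C^{\log}(\rn)$ (it follows from $\|\chi_Q\|_{\vlp}\sim|Q|^{1/p_Q}$ with $p_Q=p(x_Q)$ for small cubes and $p_Q=p_\fz$ for large ones), but note it is not what Lemma~\ref{l-bigsball} provides: that lemma compares two nested cubes within the same $\vlp$, not the $L^{p(\cdot)}$- and $L^{r(\cdot)}$-norms of the same cube. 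So your commentary misattributes the needed tool, although the estimate itself is sound. Both routes produce exactly the bound $\|\ct(g_\ez)\|_{\vlp}\ls|Q|^{-1/2}\|\chi_Q\|_{\vlp}\,\|g_\ez\|_{T_2^2(\urn)}$, and your ordering of the argument (first get $h\in T_{2,\fz}^{p(\cdot)}(\urn)$, then conclude $\ell=\ell_h$ by density) is in fact a little cleaner logically than the paper's.
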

\begin{proof}
To prove this proposition, it suffices to show (ii), since (i) was already proved in
\cite[p.\,316, Theorem 2]{cms85}.

We first show that $T_{2,\fz}^{p(\cdot)}(\urn)\subset (T_{2}^{p(\cdot)}(\urn))^\ast$.
Let $h\in T_{2,\fz}^{p(\cdot)}(\urn)$. Then, by the H\"older inequality
and Remark \ref{r-atom}(ii),
we find that, for any $(p(\cdot),\fz)$-atom $a$
supported on $\wh Q$ for some cube $Q\st\rn$,
\begin{eqnarray}\label{dual-x}
\int_{\urn}|h(x,t)a(x,t)|\,\frac{dxdt}{t}
&&\le \frac{|Q|^{1/2}}{\|\chi_Q\|_{\vlp}}
\lf\{\int_{\wh Q}|h(x,t)|^2\,\frac{dxdt}{t}\r\}^{1/2}\\
&&\le \|\mathcal{C}_{p(\cdot)}(h)\|_{L^\fz(\rn)}=\|h\|_{T_{2,\fz}^{p(\cdot)}(\urn)}.\noz
\end{eqnarray}
For any $f\in T_2^{p(\cdot)}(\urn)$, by Lemma \ref{l-tent}, we know that,
for almost every $(x,t)\in\urn$,
$f(x,t)=\sum_{j\in\nn}\lz_ja_j(x,t)$, where $\{\lz_j\}_{j\in\nn}$ and
$\{a_j\}_{j\in\nn}$ are as in Lemma \ref{l-tent} satisfying \eqref{tent-y}.
From this, \eqref{dual-x} and Remark \ref{r-tent}, we deduce that
\begin{eqnarray*}
|\ell_h(f)|
&&\le\sum_{j\in\nn}|\lz_j|\int_{\urn}|h(x,t)||a_j(x,t)|\,\frac{dxdt}{t}\\
&&\le\sum_{j\in\nn}|\lz_j|\|h\|_{T_{2,\fz}^{p(\cdot)}(\urn)}
\ls \|h\|_{T_{2,\fz}^{p(\cdot)}(\urn)}\|f\|_{T_2^{p(\cdot)}(\urn)},
\end{eqnarray*}
which implies that $\ell_h$ is a bounded linear functional on
$T_{2}^{p(\cdot)}(\urn)$
and
$$\|\ell_h\|_{({T_{2}^{p(\cdot)}(\urn)})^\ast}
\ls\|h\|_{T_{2,\fz}^{p(\cdot)}(\urn)}.$$

Next, we prove that
$(T_{2}^{p(\cdot)}(\urn))^\ast\subset T_{2,\fz}^{p(\cdot)}(\urn)$.
Let $\ell\in (T_{2}^{p(\cdot)}(\urn))^\ast$.
For all $k\in\nn$, let $\wz O_k:=\{(x,t)\in\urn:\ |x|\le k, 1/k\le t\le k\}$.
Then $\{\wz O_k\}_{k\in\nn}$ is a family of compact sets of $\urn$ and
$\urn=\bigcup_{k\in\nn}\wz O_k$. Observe that, for each $k\in\nn$, if
$f\in L^2(\urn)$ with $\supp f\subset \wz O_k$, then
$\supp \ct(f)\subset O_k^\ast:=\{x\in\rn:\ |x|\le 2k\}$. It follows, from the
H\"older inequality, that
\begin{eqnarray*}
\int_{O_k^\ast}\ct(f)(x)\,dx
&&\le |O_k^\ast|^{1/2}\lf\{\int_{O_k^\ast}\int_{\Gamma(x)}|f(y,t)|^2\frac{dydt}{t^{n+1}}
dx\r\}^{1/2}\\
&&\ls|O_k^\ast|^{1/2}\lf\{\int_{\wz O_k}|f(y,t)|^2\frac{dydt}{t}\r\}^{1/2}
\sim |O_k^\ast|^{1/2}\|f\|_{L^2(\wz O_k)}.
\end{eqnarray*}
By this and the fact that $p_+\in(0,1]$, we further find that
\begin{eqnarray*}
\int_\rn\lf[\frac{\ct(f)(x)}{|O_k^\ast|^{-\frac12}\|f\|_{L^2(\wz O_k)}}\r]^{p(x)}\,dx
\le \int_{O_k^\ast}\lf[1+\frac{|O_k^\ast|^{\frac12}\ct(f)(x)}
{\|f\|_{L^2(\wz O_k)}}\r]^{p(x)}\,dx\ls|O_k^\ast|,
\end{eqnarray*}
which implies that $\|f\|_{T_2^{p(\cdot)}(\urn)}\le C_{(k)}\|f\|_{L^2(\wz O_k)}$,
where $C_{(k)}$ is a positive constant depending on $k$.
Thus, $\ell$ also induces a bounded linear functional on $L^2(\wz O_k)$.
By the Riesz theorem, there exists a unique $h_k\in L^2(\wz O_k)$ such that,
for all $f\in L^2(\wz O_k)$,
\begin{equation*}
\ell(f)=\int_{\urn} f(x,t)h_k(x,t)\,\frac{dxdt}t.
\end{equation*}
Obviously, $h_{k+1}\chi_{\wz O_k}=h_k$ for all $k\in\nn$. Let
$$h:=h_1\chi_{\wz O_1}+\sum_{k=2}^\fz h_k\chi_{\wz O_{k}\backslash \wz O_{k-1}}.$$
Then $h\in L_{\loc}^2(\urn)$ and, for any $f\in L^2(\urn)$ having compact
support,
$$\ell(f)=\int_{\urn}f(y,t)h(y,t)\,\frac{dydt}{t}.$$

Now, for any $f\in T_2^{p(\cdot)}(\urn)$, by Lemma \ref{l-tent}, we have
$f(x,t)=\sum_{j\in\nn}\lz_ja_j(x,t)$ for almost every $(x,t)\in\urn$,
where $\{\lz_j\}_{j\in\nn}$ and
$\{a_j\}_{j\in\nn}$ are as in Lemma \ref{l-tent} satisfying \eqref{tent-y}.
For all $N\in\nn$, let $f_N:=\sum_{j=1}^N\lz_ja_j$. Then
$f_N\to f$ in $T_2^{p(\cdot)}(\urn)$ as $N\to\fz$ due to
Corollary \ref{c-tent1}.
Moreover, it is easy to see that
$f_N\in L^2(\urn)$ having compact support and hence
$$\ell(f_N)=\int_{\urn}f_N(y,t)h(y,t)\,\frac{dydt}{t}.$$
Observer that, for all $N\in\nn$,
$$|f_N|\le\sum_{j=1}^N|\lz_j||a_j|\le \sum_{j\in\nn}|\lz_j||a_j|$$
and, by \eqref{dual-x} and
Remark \ref{r-tent}, we find that
\begin{eqnarray*}
&&\sum_{j\in\nn}|\lz_j|\int_{\urn}|h(x,t)||a_j(x,t)|\,\frac{dxdt}{t}\\
&&\hs\ls\|h\|_{T_{2,\fz}^{p(\cdot)}(\urn)}\sum_{j\in\nn}|\lz_j|
\ls\|h\|_{T_{2,\fz}^{p(\cdot)}(\urn)}\cb(\{\lz_ja_j\}_{j\in\nn})\\
&&\hs\ls\|h\|_{T_{2,\fz}^{p(\cdot)}(\urn)}\|f\|_{T_2^{p(\cdot)}(\urn)}.
\end{eqnarray*}
Therefore, from the dominated convergence theorem, we deduce that
\begin{eqnarray*}
\ell(f)=\lim_{N\to\fz}\ell(f_N)=\int_{\urn}f(y,t)h(y,t)\,\frac{dydt}{t}.
\end{eqnarray*}

To complete the proof of this proposition, it remains to show that
$h\in T_{2,\fz}^{p(\cdot)}(\urn)$. Indeed, for any cube $Q\subset\rn$ and $j\in\nn$,
let $R_{Q,j}:=\wh Q\cap\{(x,t)\in\urn:\ t\ge1/j\}$ and
$$\eta_j:=\frac{|Q|^{1/2}\overline{h}\chi_{R_{Q,j}}}
{\|\chi_Q\|_{\vlp}\|h\chi_{R_{Q,j}}\|_{L^2(\urn)}}.$$
Then, by the Minkowski inequality, we find that
\begin{eqnarray*}
\|\eta_j\|_{T_2^2(\urn)}
&&\sim\lf\{\int_\rn\int_{\Gamma(x)}|\eta_j(y,t)|^2\,\frac{dydt}{t^{n+1}}
\,dx\r\}^{\frac12}\\
&&\ls\lf\{\int_{R_{Q,j}}|\eta_j(y,t)|^2\,\frac{dydt}{t}\r\}^{\frac12}
\ls|Q|^{1/2}\|\chi_Q\|_{\vlp}^{-1},
\end{eqnarray*}
namely, $\eta_j$ is a $(p(\cdot),2)$-atom up to a positive constant multiple.
From this, the Fatou lemma
and Corollary \ref{c-tent}, we further deduce that
\begin{eqnarray*}
&&\frac{|Q|^{1/2}}{\|\chi_Q\|_{\vlp}}\lf\{\int_{\wh Q}|h(y,t)|^2\,\frac{dydt}{t}\r\}^{1/2}\\
&&\hs\le\liminf_{j\to\fz}
\int_{\urn}\frac{|Q|^{\frac12}\overline{h(y,t)}h(y,t)\chi_{R_{Q,j}}(y,t)}
{\|\chi_Q\|_{\vlp}\|h\chi_{R_{Q,j}}\|_{L^2(\urn)}}\,\frac{dydt}{t}
=\liminf_{j\to\fz}\ell(\eta_j)\\
&&\hs\ls\liminf_{j\to\fz}\|\ell\|_{(T_2^{p(\cdot)}(\urn))^\ast}
\|\eta_j\|_{T_{2}^{p(\cdot)}(\urn)}\ls\|\ell\|_{(T_2^{p(\cdot)}(\urn))^\ast},
\end{eqnarray*}
which, together with the arbitrariness of cubes $Q$,
implies that $h\in T_{2,\fz}^{p(\cdot)}(\urn)$ and
$$\|h\|_{T_{2,\fz}^{p(\cdot)}(\urn)}\ls
\|\ell\|_{(T_{2}^{p(\cdot)}(\urn))^\ast}.$$
This finishes the proof of Proposition \ref{p-dual}.
\end{proof}

To prove Theorem \ref{t-dual}, we also need the following estimate.

\begin{lemma}\label{l-dual}
Let $p(\cdot)$ and $s_0$ be as in Proposition \ref{p-bmo1}.
Then there exists a positive constant $C$ such that,
for all $f\in L^2(\rn)$ satisfying $\supp f\st Q:=Q(x_Q,r_Q)$ for some $x_Q\in\rn$
and $r_Q\in(0,\fz)$,
\begin{equation*}
\lf\|(I-P_{s_0,r_Q^m})f\r\|_{H_L^{p(\cdot)}(\rn)}
\le C|Q|^{-1/2}\|\chi_Q\|_{\vlp}\|f\|_{L^2(\rn)}.
\end{equation*}
\end{lemma}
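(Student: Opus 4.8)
The plan is to estimate $\|S_L(g)\|_{\vlp}$ directly, where $g:=(I-P_{s_0,r_Q^m})f=(I-e^{-r_Q^mL})^{s_0+1}f$; note that $g\in L^2(\rn)$ with $\|g\|_{L^2(\rn)}\ls\|f\|_{L^2(\rn)}$ by Remark \ref{r-operator}(iii), and $\|g\|_{\hlp}=\|S_L(g)\|_{\vlp}$ by definition. First I would decompose $\rn=\bigcup_{i\in\zz_+}U_i(Q)$ with $U_0(Q):=4Q$ and $U_i(Q):=2^{i+2}Q\setminus(2^{i+1}Q)$ for $i\in\nn$. Since $\underline{p}\le p_-$, the space $L^{p(\cdot)/\underline{p}}(\rn)$ is a Banach space (see Remark \ref{r-vlp}(i)), whence $\|S_L(g)\|_{\vlp}^{\underline{p}}\le\sum_{i\in\zz_+}\|S_L(g)\chi_{U_i(Q)}\|_{\vlp}^{\underline{p}}$, and it suffices to bound each summand by $2^{-i\eta\underline{p}}(|Q|^{-1/2}\|\chi_Q\|_{\vlp}\|f\|_{L^2(\rn)})^{\underline{p}}$ for some $\eta\in(0,\fz)$, the term $i=0$ being handled without the decay factor.

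The term $i=0$ is immediate: applying Lemma \ref{l-estimate} (with $q=2$) to $S_L(g)\chi_{4Q}$ and using \eqref{SL-bounded}, Lemma \ref{l-bigsball} and $\|g\|_{L^2(\rn)}\ls\|f\|_{L^2(\rn)}$ gives $\|S_L(g)\chi_{4Q}\|_{\vlp}\ls|Q|^{-1/2}\|\chi_Q\|_{\vlp}\|f\|_{L^2(\rn)}$. For $i\in\nn$ the proof rests on two estimates for the kernel $K_t$ of $Q_{t^m}(I-e^{-r_Q^mL})^{s_0+1}$, both obtained from Assumption \ref{as-a}, \eqref{partial1} and the binomial expansion of $(I-e^{-r_Q^mL})^{s_0+1}$ (viewing $\sum_{k=0}^{s_0+1}\binom{s_0+1}{k}(-1)^ke^{-kr_Q^mL}$ as an $(s_0+1)$-th order finite difference): for all $t\in[r_Q,\fz)$, $|K_t(y,w)|\ls(r_Q/t)^{m(s_0+1)}t^{-n}g_\ast(|y-w|/t)$ with some $g_\ast$ as in Assumption \ref{as-a}; and, for all $t\in(0,\fz)$ and $|y-w|\ge2t$, $|K_t(y,w)|\ls c(t)|y-w|^{-(n+\vez)}$ with $c(t):=t^\vez$ when $t\le r_Q$ and $c(t):=(r_Q/t)^{m(s_0+1)}t^\vez$ when $t>r_Q$, where $\vez\in(0,\theta(L))$. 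From the first of these, the Minkowski integral inequality, $\supp f\subset Q$ and the H\"older inequality, I would deduce the operator bound $\|Q_{t^m}g\|_{L^2(\rn)}\ls(r_Q/t)^{m(s_0+1)+n/2}\|f\|_{L^2(\rn)}$ for $t\ge r_Q$; the extra factor $(r_Q/t)^{n/2}$, produced by the smallness of $\supp f$, is what makes the argument go through.

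Now fix $\vez\in(n(\frac1{p_-}-1),\min\{\theta(L),m(s_0+1)\})$, which is a nonempty interval because $p_->\frac n{n+\theta(L)}$ and because $s_0=\lfloor\frac nm(\frac1{p_-}-1)\rfloor$ forces $m(s_0+1)>n(\frac1{p_-}-1)$. For $x\in U_i(Q)$ I would split the defining integral of $S_L(g)(x)$ over $\Gamma(x)$ into the ranges $t<2^{i-2}r_Q$ and $t\ge2^{i-2}r_Q$. On the first range every $(y,t)$ in the cone satisfies $|y-w|\gs2^ir_Q$ for $w\in Q$, so the second kernel bound together with $\|f\|_{L^1(Q)}\le|Q|^{1/2}\|f\|_{L^2(\rn)}$ and $\int_0^{2^{i-2}r_Q}c(t)^2\,\frac{dt}{t}\ls r_Q^{2\vez}$ (here $\vez<m(s_0+1)$ is used) gives the pointwise bound $S_L(g)(x)\ls2^{-i(n+\vez)}|Q|^{-1/2}\|f\|_{L^2(\rn)}$ on this part; hence, by Lemma \ref{l-bigsball}, the corresponding $\vlp$-norm on $U_i(Q)$ is $\ls2^{-i[\vez-n(\frac1{p_-}-1)]}|Q|^{-1/2}\|\chi_Q\|_{\vlp}\|f\|_{L^2(\rn)}$. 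On the second range, the operator bound $\|Q_{t^m}g\|_{L^2(\rn)}\ls(r_Q/t)^{m(s_0+1)+n/2}\|f\|_{L^2(\rn)}$ and Fubini over the cone give that the contribution to $S_L(g)$ from heights $t\ge2^{i-2}r_Q$ has $L^2(U_i(Q))$-norm $\ls2^{-i(m(s_0+1)+n/2)}\|f\|_{L^2(\rn)}$, so Lemma \ref{l-estimate} (with $q=2$) and Lemma \ref{l-bigsball} bound its $\vlp$-norm on $U_i(Q)$ by $\ls2^{-i[m(s_0+1)-n(\frac1{p_-}-1)]}|Q|^{-1/2}\|\chi_Q\|_{\vlp}\|f\|_{L^2(\rn)}$. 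Summing the $\underline{p}$-th powers over $i\in\nn$ (both geometric series converge, the two exponents being positive) and adding the $i=0$ term completes the proof. The main obstacle, and the only genuinely delicate point, is the large-$t$ part of the cone: the cancellation of $(I-e^{-r_Q^mL})^{s_0+1}$ by itself yields only decay $2^{-im(s_0+1)}$, which is insufficient, and one must additionally exploit that $f$ lives on a cube of radius $r_Q$ to gain the extra $(r_Q/t)^{n/2}$ in the $L^2$-bound of $Q_{t^m}g$; this is exactly where the choice of $s_0$ and the hypothesis $p_->\frac n{n+\theta(L)}$ are consumed.
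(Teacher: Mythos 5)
Your proof is correct, and it takes essentially the same two-piece near/far decomposition as the paper, but with a self-contained treatment of the far-field term where the paper simply cites an external estimate. Concretely: the paper's proof, for $x\notin 4Q$, invokes the pointwise bound $S_L(I-P_{s_0,r_Q^m})(f)(x)\ls (r_Q)^{n/2+\vez}|x-x_Q|^{-(n+\vez)}\|f\|_{L^2(\rn)}$ directly from \cite[(4.20)]{jyz09}, and then sums over dyadic annuli $(4^{k+1}Q)\setminus(4^kQ)$ using Lemma \ref{l-bigsball} and a convexity inequality with some $r\in(0,p_-)$. You instead re-derive the necessary decay from scratch: you represent $(I-e^{-r_Q^mL})^{s_0+1}$ as an $(s_0+1)$-fold finite difference, obtain the kernel bounds for $Q_{t^m}(I-e^{-r_Q^mL})^{s_0+1}$ from Assumption \ref{as-a} and \eqref{partial1}, and then split the cone integral on each annulus into small-$t$ (where a pointwise far-field bound with decay $2^{-i(n+\vez)}$ suffices) and large-$t$ (where you exploit the $(r_Q/t)^{m(s_0+1)}$ gain plus the $(r_Q/t)^{n/2}$ gain from $\supp f\subset Q$ to get an $L^2(U_i(Q))$ bound). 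Both give geometric decay with positive exponent thanks to $\vez>n(1/p_--1)$ and $m(s_0+1)>n(1/p_--1)$ (the latter being automatic from the definition of $s_0$), so the argument closes. Your version has the advantage of being transparent about exactly which cancellations are used and why the exponent $s_0$ is chosen as it is; the paper's version is shorter at the cost of outsourcing the key estimate. One cosmetic remark: your second kernel bound (the $c(t)|y-w|^{-(n+\vez)}$ estimate for $t\le r_Q$) is stated with $c(t)=t^\vez$, which accounts only for the $k=0$ term of the binomial expansion; the $k\ge 1$ terms contribute $\ls t^m r_Q^{\vez-m}|y-w|^{-(n+\vez)}$, which can dominate $t^\vez|y-w|^{-(n+\vez)}$ when $\vez>m$. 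This does not affect your conclusion, since $\int_0^{r_Q}(t^m r_Q^{\vez-m})^2\,dt/t\ls r_Q^{2\vez}$ as well, but the statement of the kernel bound should be adjusted (or one should simply record the integral bound $\int_0^{r_Q}c(t)^2\,dt/t\ls r_Q^{2\vez}$ directly).
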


\begin{proof}
Obviously, we have
\begin{eqnarray}\label{dual2-0}
&&\lf\|S_L([I-P_{s_0,r_Q^m}]f)\r\|_{\vlp}\\
&&\hs\ls\lf\|S_L([I-P_{s_0,r_Q^m}]f)\chi_{4Q}\r\|_{\vlp}
+\lf\|S_L([I-P_{s_0,r_Q^m}]f)\chi_{\rn\setminus(4Q)}\r\|_{\vlp}\noz\\
&&\hs=:{\rm J}_1+{\rm J}_2.\noz
\end{eqnarray}
By the boundedness of $S_L$ in $L^2(\rn)$ (see \eqref{SL-bounded})
and Remark \ref{r-operator}(iii), we see that
$$\|S_L([I-P_{s_0,r_Q^m}]f)\|_{L^2(\rn)}\ls\|f\|_{L^2(\rn)},$$
which, together with Lemmas \ref{l-estimate} and \ref{l-bigsball}, implies that
\begin{equation}\label{dual2-1}
{\rm J}_1\ls |Q|^{-1/2}\|\chi_Q\|_{\vlp}\|f\|_{L^2(\rn)}.
\end{equation}
To deal with the term ${\rm J}_2$, since $p_-\in (\frac n{n+\theta(L)},1]$
with $p_-$ and $\theta(L)$ as in \eqref{2.1x} and \eqref{2.3x}, respectively,
we choose $\vez\in(0,\theta(L))$ such that $p_-\in (\frac n{n+\vez},1]$.
Notice that, for all $x\notin 4Q$,
\begin{equation*}
S_L(I-P_{s_0,r_Q^m})(f)(x)\ls \frac{(r_Q)^{\frac n2+\vez}}{|x-x_Q|^{n+\ez}}\|f\|_{L^2(\rn)};
\end{equation*}
see \cite[(4.20)]{jyz09}. Then, by this, Lemma \ref{l-bigsball}
and the fact that $\vez\in(n(\frac1{p_-}-1),\theta(L))$, we further know that,
for any $r\in(0,p_-)$,
\begin{eqnarray}\label{dual2-2}
{\rm J}_2
&&\sim \lf\|\sum_{k=0}^\fz S_L(I-P_{s_0,r_Q^m})(f)\chi_{(4^{k+1}Q)\backslash (4^kQ)}
\r\|_{\vlp}\\
&&\ls \lf\{\sum_{k=0}^\fz \lf\|\frac{(r_Q)^{\frac n2+\vez}}{|\cdot-x_Q|^{n+\ez}}
\chi_{(4^{k+1}Q)\backslash (4^kQ)}\r\|_{\vlp}^r\r\}^{\frac1r}\|f\|_{L^2(\rn)}\noz\\
&&\ls \lf\{\sum_{k=0}^\fz 4^{-k(n+\vez)r}\|\chi_{4^kQ}\|_{\vlp}\r\}^{\frac1r}
|Q|^{-\frac12}\|f\|_{L^2(\rn)}\noz\\
&&\ls \lf\{\sum_{k=0}^\fz 4^{-k(n+\vez-n/p_-)r}\r\}^{\frac1r}
\|\chi_Q\|_{\vlp}|Q|^{-\frac12}\|f\|_{L^2(\rn)}\noz\\
&&\sim |Q|^{-\frac12}\|\chi_Q\|_{\vlp}\|f\|_{L^2(\rn)}.\noz
\end{eqnarray}

Combining the estimates \eqref{dual2-0}, \eqref{dual2-1} and \eqref{dual2-2},
we conclude the desired result and then complete the proof of Lemma \ref{l-dual}.
\end{proof}

For all $s\in[s_0,\fz)$ with $s_0$ as in \eqref{3.6x},
$t\in(0,\fz)$, $f\in \cm(\rn)$ and $x\in\rn$, let
$$P_{s,t}^\ast f(x):=f(x)-(I-e^{-tL^\ast})^{s+1}f(x)
\quad{\rm and}\quad
Q_{s,t}^\ast f(x):=(tL^\ast)^{s+1}e^{-tL^\ast}f(x),$$
where $L^\ast$ denotes the adjoint operator of $L$ in $L^2(\rn)$.
Suppose that $\az$ is a $(p(\cdot),s,L)$-molecule.
Then, by Theorem \ref{t-mol}(ii), we see that $\az\in H_L^{p(\cdot)}(\rn)$
and hence $G:=Q_{t^m}\az\in T_2^{p(\cdot)}(\urn)$.
Let $f\in\cm(\rn)$ be such that
$$\mu_f(x,t):=|Q_{s,t^m}^\ast(I-P_{s_0,t^m}^\ast)f(x)|^2\,\frac{dxdt}t,
\quad\forall\ (x,t)\in\rr^{n+1}_+$$
 is a $p(\cdot)$-Carleson measure on $\urn$ and, for all $(x,t)\in\urn$, let
$$F(x,t):=Q_{s,t^m}^\ast(I-P_{s_0,t^m}^\ast)f(x).$$
Then
$\|F\|_{T_{2,\fz}^{p(\cdot)}(\urn)}\ls \|\mu_f\|_{p(\cdot)}<\fz$. From this
and Proposition \ref{p-dual}(ii), we deduce that the integral
$$J(F,G):=\int_{\urn}F(x,t)G(x,t)\,\frac{dxdt}t$$
converges absolutely and hence
$$\int_{\urn}\lf|Q_{t^m}\az(x)
Q_{s,t^m}^\ast(I-P_{s_0,t^m}^\ast)f(x)\r|\,\frac{dxdt}t<\fz.$$
Indeed, by an argument similar to that used in the
proof of \cite[Proposition 5.1]{dy05}, we have the following technical lemma,
the details being omitted.

\begin{lemma}\label{l-3.26x}
Let $p(\cdot)\in C^{\log}(\rn)$, $s_0$ and $s$ be as in Proposition
\ref{p-bmo1}.
Suppose that $\az$ is a $(p(\cdot),s,L)$-molecule and $f\in\cm(\rn)$ satisfies that
$\mu_f(x,t):=|Q_{s,t^m}^\ast(I-P_{s_0,t^m}^\ast)f(x)|^2\,\frac{dxdt}t$ for all
$(x,t)\in \urn$ is a
$p(\cdot)$-Carleson measure on $\urn$. Then
\begin{equation*}
\int_\rn f(x)\az(x)\,dx=C_{(m,s)}\int_{\urn}Q_{t^m}\az(x)
Q_{s,t^m}^\ast(I-P_{s_0,t^m}^\ast)f(x)\,\frac{dxdt}t,
\end{equation*}
where $C_{(m,s)}$ is as in \eqref{pi-def}.
\end{lemma}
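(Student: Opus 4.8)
The plan is to follow the argument of \cite[Proposition 5.1]{dy05}, whose two ingredients are the $L^2$-Calder\'on reproducing formula for the molecule $\az$ and a Fubini-type interchange that transfers the operators $Q_{s,t^m}$ and $I-P_{s_0,t^m}$ onto their $L^2$-adjoints acting on $f$. First I would record what is needed about $\az$: since $\az=\pi_L(a)$ for some $(p(\cdot),\fz)$-atom $a$, we have $\az\in L^2(\rn)$, and, being a $(p(\cdot),s,L)$-molecule, $\az$ obeys the off-diagonal $L^2$ decay on the annuli $U_i(Q)$ used in the proof of Proposition \ref{p-tent2} together with $Q_{t^m}\az\in T_2^{p(\cdot)}(\urn)$. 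By Remark \ref{r-3.14x},
$$\az=C_{(m,s)}\int_0^\fz Q_{s,t^m}(I-P_{s_0,t^m})Q_{t^m}\az\,\frac{dt}t$$
with convergence in $L^2(\rn)$; combining the decay of $\az$ with the definition of $\cm(\rn)$ as a weighted $L^2$ space, a Cauchy--Schwarz estimate over the $U_i(Q)$ shows $f\az\in L^1(\rn)$, so the left-hand side of the asserted identity makes sense.

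Next, for $0<\ez<N<\fz$ I would set $\az_\ez^N:=C_{(m,s)}\int_\ez^N Q_{s,t^m}(I-P_{s_0,t^m})Q_{t^m}\az\,\frac{dt}t$, which tends to $\az$ in $L^2(\rn)$ as $\ez\to0$ and $N\to\fz$ and which, by Assumption \ref{as-a}, \eqref{operator-k} and \eqref{partial1}, has pointwise $x$-decay good enough to be paired with $f\in\cm(\rn)$. For fixed $\ez$ and $N$, the Fubini theorem --- legitimate because the kernels of $Q_{s,t^m}(I-P_{s_0,t^m})$ decay rapidly, $Q_{t^m}\az(\cdot,t)\in L^2(\rn)$ for each $t$, and $f\in\cm(\rn)$ --- together with the adjoint identity $\int_\rn f\,Q_{s,t^m}(I-P_{s_0,t^m})g\,dx=\int_\rn Q_{s,t^m}^\ast(I-P_{s_0,t^m}^\ast)f\,g\,dx$, valid for $g\in L^2(\rn)$ and $f\in\cm(\rn)$ because $Q_{s,t^m}^\ast$ and $P_{s_0,t^m}^\ast$ map $\cm(\rn)$ into itself, commute, and are the transposes of $Q_{s,t^m}$ and $P_{s_0,t^m}$, gives
$$\int_\rn f(x)\az_\ez^N(x)\,dx=C_{(m,s)}\int_\ez^N\int_\rn Q_{t^m}\az(x)\,Q_{s,t^m}^\ast(I-P_{s_0,t^m}^\ast)f(x)\,\frac{dxdt}t.$$

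Then I would let $\ez\to0$ and $N\to\fz$ in this identity. On the left, $\az_\ez^N\to\az$ in $L^2(\rn)$, and the off-diagonal decay of $\az$ against the weight defining $\cm(\rn)$ upgrades this to $\int_\rn f\az_\ez^N\to\int_\rn f\az$. On the right, the integrand equals $G(x,t)F(x,t)$, where $G:=Q_{t^m}\az\in T_2^{p(\cdot)}(\urn)$ and $F:=Q_{s,t^m}^\ast(I-P_{s_0,t^m}^\ast)f\in T_{2,\fz}^{p(\cdot)}(\urn)$ (the latter since $\mu_f$ is a $p(\cdot)$-Carleson measure, as observed in the discussion preceding the lemma); since $J(F,G)$ converges absolutely by Proposition \ref{p-dual}(ii), namely $GF/t\in L^1(\urn)$, the dominated convergence theorem lets the truncated double integral converge to $\int_{\urn}Q_{t^m}\az\,Q_{s,t^m}^\ast(I-P_{s_0,t^m}^\ast)f\,\frac{dxdt}t$, which yields the asserted formula.

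The hard part will be the uniform-in-$\ez,N$ bookkeeping for the left-hand side limit and for the Fubini step: both reduce to bounding $\int_\rn\int_\ez^N|f(x)|\,|Q_{s,t^m}(I-P_{s_0,t^m})Q_{t^m}\az(x)|\,\frac{dt}t\,dx$, which is handled by balancing the rapid kernel decay of $Q_{s,t^m}(I-P_{s_0,t^m})$ (via \eqref{operator-k} and \eqref{partial1}) and the off-diagonal $L^2$ decay of $\az$ against the growth allowed by the weight defining $\cm(\rn)$, together with the square-function estimate, which gives $\int_0^\fz\|Q_{t^m}\az(\cdot,t)\|_{L^2(\rn)}^2\,\frac{dt}t\sim\|\az\|_{L^2(\rn)}^2$. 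Once these are in place, the $L^2$ reproducing formula and the adjoint computation are routine and in spirit identical to \cite{dy05}, which is why the details are omitted.
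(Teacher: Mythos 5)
Your proposal is correct and follows essentially the same route as the paper, which explicitly omits the details and appeals to the argument of \cite[Proposition 5.1]{dy05}: truncate the Calder\'on reproducing formula for $\az$ from Remark \ref{r-3.14x}, pass the operators to their adjoints by Fubini, and then remove the truncation using the absolute convergence of the pairing $J(F,G)$ established via Proposition \ref{p-dual}(ii) in the discussion immediately preceding the lemma. You correctly identify all the ingredients the paper relies on, so nothing further is needed.
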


We are now ready to prove Theorem \ref{t-dual}.

\begin{proof}[Proof of Theorem \ref{t-dual}]
We first prove (i). Let $g\in {\rm BMO}_{p(\cdot),L^\ast}^{s_0}(\rn)$ and
$f\in H_{L,{\rm fin}}^{p(\cdot)}(\rn)$. Then $f$ has an expression
$f=\sum_{j=1}^N\lz_ja_j$, where $N\in\nn$,
$\{\lz_j\}_{j=1}^N\st\cc$ and $\{\az_j\}_{j=1}^N$
are $(p(\cdot),s,L)$-molecules associated with cubes $\{R_j\}_{j=1}^N$ of $\rn$
satisfying
\begin{equation*}
\lf\|\lf\{\sum_{j=1}^N\lf[\frac{|\lz_j|}{\|\chi_{R_j}\|_{\vlp}}\chi_{R_j}
\r]^{\underline{p}}
\r\}^{\frac1{\underline{p}}}\r\|_{\vlp}\ls \|f\|_{H_{L,{\rm fin}}^{p(\cdot)}(\rn)}.
\end{equation*}
For each $j\in\nn$, since $\az_j\in H_L^{p(\cdot)}(\rn)$, it follows that
$Q_{t^m}\az_j\in T_2^{p(\cdot)}(\urn)$. By this and Corollary \ref{c-tent},
we know that, for any $j\in\nn$, there exist $\{\lz_j^k\}_{k\in\nn}\st\cc$ and
a sequence $\{a_j^k\}_{k\in\nn}$ of $(p(\cdot),\fz)$-atoms such that
$Q_{t^m}\az_j=\sum_{k\in\nn}\lz_j^ka_j^k$ almost everywhere on $\urn$,
$\supp a_j^k\st \wh R_j^k$ with some cube $R_j^k\st\rn$ for all $k\in\nn$,
and
$$\cb(\{\lz_j^ka_j^k\}_{k\in\nn})
\ls \|Q_{t^m}\az_j\|_{T_2^{p(\cdot)}(\urn)}
\sim\|\az_j\|_{H_L^{p(\cdot)}(\rn)}.$$
Thus, from Lemma \ref{l-3.26x}, the H\"older inequality,
Proposition \ref{p-carleson}, Remarks \ref{r-atom}(ii) and \ref{r-tent},
we deduce that
\begin{eqnarray*}
&&\lf|\int_\rn \az_j(x)g(x)\,dx\r|\\
&&\hs\sim\lf|\int_{\urn}Q_{t^m}\az_j(y)Q_{s,t^m}^\ast
(I-P_{s_0,t^m}^\ast)g(y)\r|\,\frac{dydt}{t}\\
&&\hs\ls\sum_{k\in\nn}\int_{\wh R_j^k}|\lz_j^ka_j^k(y,t)
Q_{s,t^m}^\ast(I-P_{s_0,t^m}^\ast)g(y)|\,\frac{dydt}{t}\\
&&\hs\ls\sum_{k\in\nn}|\lz_j^k|
\lf\{\int_{\wh R_j^k}|a_j^k(y,t)|^2\,\frac{dydt}{t}\r\}^{\frac12}
|R_j^k|^{\frac12}\|\chi_{R_j^k}\|_{\vlp}\|g\|_{{\rm BMO}_{p(\cdot),L^\ast}^{s_0}(\rn)}\\
&&\hs\ls\sum_{k\in\nn}|\lz_j^k|\|g\|_{{\rm BMO}_{p(\cdot),L^\ast}^{s_0}(\rn)}
\ls\cb(\{\lz_j^ka_j^k\}_{k\in\nn})\|g\|_{{\rm BMO}_{p(\cdot),L^\ast}^{s_0}(\rn)}\\
&&\hs\ls\|g\|_{{\rm BMO}_{p(\cdot),L^\ast}^{s_0}(\rn)}\|\az_j\|_{H_L^{p(\cdot)}(\rn)}
\ls\|g\|_{{\rm BMO}_{p(\cdot),L^\ast}^{s_0}(\rn)}.
\end{eqnarray*}
By this and Remark \ref{r-tent}, we further obtain
\begin{eqnarray*}
\lf|\int_\rn f(x)g(x)\,dx\r|
&&\le \sum_{j=1}^N|\lz_j|\lf|\int_\rn\az_j(x)g(x)\,dx\r|
\ls\sum_{j=1}^N|\lz_j|\|g\|_{{\rm BMO}_{p(\cdot),L^\ast}^{s_0}(\rn)}\\
&&\ls \lf\|\lf\{\sum_{j=1}^N\lf[\frac{|\lz_j|}{\|\chi_{R_j}\|_{\vlp}}
\chi_{R_j}\r]^{\underline{p}}\r\}^{\frac1{\underline{p}}}\r\|_{\vlp}
\|g\|_{{\rm BMO}_{p(\cdot),L^\ast}^{s_0}(\rn)}\\
&&\ls \|f\|_{H_L^{p(\cdot)}(\rn)}\|g\|_{{\rm BMO}_{p(\cdot),L^\ast}^{s_0}(\rn)}.
\end{eqnarray*}
Therefore, by Corollary \ref{c-dense} and a density argument,
we conclude that $\ell_g$ is a bounded linear functional on $H_L^{p(\cdot)}(\rn)$
and
$\|\ell_g\|_{(H_L^{p(\cdot)}(\rn))^\ast}
\ls \|g\|_{{\rm BMO}_{p(\cdot),L^\ast}^{s_0}(\rn)}$.

Next we show (ii). For any $\eta\in T_2^{p(\cdot)}(\urn)$, by Proposition \ref{p-tent2}(ii),
we know that $\pi_L(\eta)\in H_L^{p(\cdot)}(\rn)$ and hence, for any
$\ell\in (H_L^{p(\cdot)}(\rn))^\ast$, we have
\begin{eqnarray*}
\lf|(\ell\circ\pi_L)(\eta)\r|
&&=|\ell(\pi_l(\eta))|\ls\|\ell\|_{(H_L^{p(\cdot)}(\rn))^\ast}
\|\pi_L(\eta)\|_{H_L^{p(\cdot)}(\rn)}\\
&&\ls\|\ell\|_{(H_L^{p(\cdot)}(\rn))^\ast}\|\pi_L\|_{T_2^{p(\cdot)}(\urn)\to
H_L^{p(\cdot)}(\rn)}\|\eta\|_{T_2^{p(\cdot)}(\urn)}.
\end{eqnarray*}
In other words, $\ell\circ\pi_L$ is a bounded linear functional on
$T_2^{p(\cdot)}(\urn)$. Thus, by Proposition \ref{p-dual}(ii), we find that
there exists a function
$h\in T_{2,\fz}^{p(\cdot)}(\rn)$ such that, for all $\eta\in T_2^{p(\cdot)}(\urn)$,
\begin{equation}\label{dual2-x}
(\ell\circ\pi_L)(\eta)=\int_{\urn}\eta(x,t)h(x,t)\,\frac{dxdt}{t}.
\end{equation}
On the other hand, by Remark \ref{r-3.14x},
we see that, for all $f\in H_L^{p(\cdot)}(\rn)\cap L^2(\rn)$,
$f=\pi_L(Q_{t^m}f)$ in $L^2(\rn)$. From this and \eqref{dual2-x}, we deduce that
\begin{eqnarray*}
\ell(f)&&=(\ell\circ\pi_L)(Q_{t^m}f)=\int_{\urn}h(x,t)Q_{t^m}f(x)\,\frac{dxdt}{t}\\
&&=\int_\rn\lf\{\int_0^\fz(Q_{t^m}^\ast h)(x,t)\,\frac{dt}t\r\}f(x)\,dx
=:\int_\rn g(x)f(x)\,dx.
\end{eqnarray*}

To complete the proof of Theorem \ref{t-dual}, it remains to prove that
$g\in {\rm BMO}_{p(\cdot),L^\ast}^{s_0}(\rn)$.
For any $Q:=Q(x_Q,r_Q)\st\rn$ for some $x_Q\in\rn$ and $r_Q\in(0,\fz)$,
by Lemma \ref{l-dual}, we conclude that
\begin{eqnarray*}
\lf\{\int_Q|g(x)-P_{s_0,r_Q^m}^\ast g(x)|^2\,dx\r\}^{\frac 12}
&&=\sup_{\|u\|_{L^2(Q)}\le1}
\lf|\int_\rn\lf[g(x)-P_{s_0,r_Q^m}^\ast g(x)\r]u(x)\,dx\r|\\
&&=\sup_{\|u\|_{L^2(Q)}\le1}
\lf|\int_\rn g(x)\lf[(I-P_{s_0,r_Q^m})u(x)\r]\,dx\r|\\
&&=\sup_{\|u\|_{L^2(Q)}\le1}\lf|\ell([I-P_{s_0,r_Q^m}]u)\r|\\
&&\ls\|\ell\|_{(H_L^{p(\cdot)}(\rn))^\ast}
\sup_{\|u\|_{L^2(Q)}\le1}\lf\|(I-P_{s_0,r_Q^m})u\r\|_{H_L^{p(\cdot)}(\rn)}\\
&&\ls \|\ell\|_{(H_L^{p(\cdot)}(\rn))^\ast}|Q|^{-1/2}\|\chi_Q\|_{\vlp}.
\end{eqnarray*}
From this, we deduce that $g\in {\rm BMO}_{p(\cdot),L^\ast}^{s_0}(\rn)$ and
$\|g\|_{{\rm BMO}_{p(\cdot),L^\ast}^{s_0}(\rn)}
\ls\|\ell\|_{(H_L^{p(\cdot)}(\rn))^\ast}$,
which complete the proof of Theorem \ref{t-dual}.
\end{proof}

From Proposition \ref{p-carleson}, Theorem \ref{t-dual}
and an argument similar to that used in the proof of \cite[Theorem 4.5]{jyz09},
we easily deduce
the following characterization of the BMO-type spaces, the details being omitted.
\begin{corollary}
Let $p(\cdot)\in C^{\log}(\rn)$, $s_0$ and $s$ be as in Proposition \ref{p-bmo1}.
Then $g\in {\rm BMO}_{p(\cdot),L^\ast}^{s_0}(\rn)$ if and only if
$g\in\cm(\rn)$ and $|Q_{s,t^m}^\ast(I-P_{s_0,t^m}^\ast)g(x)|^2\,\frac{dxdt}t$
for all $(x,t)\in\urn$ is a $p(\cdot)$-Carleson measure. Moreover,
$$\|g\|_{{\rm BMO}_{p(\cdot),L^\ast}^{s_0}(\rn)}
\sim \lf\|Q_{s,t^m}^\ast(I-P_{s_0,t^m}^\ast)g\r\|_{T_{2,\fz}^{p(\cdot)}(\urn)}$$
with the implicit positive constants independent of $g$.
\end{corollary}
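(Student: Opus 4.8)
The plan is to establish the two implications separately while keeping track of the constants, so that the asserted equivalence of quasi-norms comes out automatically.

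For the necessity, I would first note that $L^\ast$ again satisfies Assumptions \ref{as-a} and \ref{as-b}: the kernel of $e^{-tL^\ast}$ is $(x,y)\mapsto\overline{p_t(y,x)}$, hence obeys \eqref{assump1} with the same $g$, $m$ and $\vez$ (so that $\theta(L^\ast)=\theta(L)$), while being one-to-one with dense range and having a bounded $H^\fz$ functional calculus all pass to the adjoint. Consequently Proposition \ref{p-carleson}, applied with $L$ replaced by $L^\ast$, shows that for $g\in{\rm BMO}_{p(\cdot),L^\ast}^{s_0}(\rn)$ the measure $d\mu_g(x,t):=|Q_{s,t^m}^\ast(I-P_{s_0,t^m}^\ast)g(x)|^2\,\frac{dxdt}{t}$ is a $p(\cdot)$-Carleson measure with $\|d\mu_g\|_{p(\cdot)}\ls\|g\|_{{\rm BMO}_{p(\cdot),L^\ast}^{s_0}(\rn)}$. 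Comparing the definitions of $\mathcal{C}_{p(\cdot)}$, of $T_{2,\fz}^{p(\cdot)}(\urn)$ and of $\|\cdot\|_{p(\cdot)}$ gives the trivial identity $\|Q_{s,t^m}^\ast(I-P_{s_0,t^m}^\ast)g\|_{T_{2,\fz}^{p(\cdot)}(\urn)}=\|d\mu_g\|_{p(\cdot)}$, which already yields the ``only if'' part together with the bound $\|Q_{s,t^m}^\ast(I-P_{s_0,t^m}^\ast)g\|_{T_{2,\fz}^{p(\cdot)}(\urn)}\ls\|g\|_{{\rm BMO}_{p(\cdot),L^\ast}^{s_0}(\rn)}$.

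For the sufficiency, suppose $g\in\cm(\rn)$ and $d\mu_g$ is a $p(\cdot)$-Carleson measure; by the same identification, $F:=Q_{s,t^m}^\ast(I-P_{s_0,t^m}^\ast)g\in T_{2,\fz}^{p(\cdot)}(\urn)$. I would define a functional on $\hlp$ by
$$\ell_0(h):=C_{(m,s)}\int_{\urn}Q_{t^m}h(x)\,F(x,t)\,\frac{dxdt}{t},$$
with $C_{(m,s)}$ as in \eqref{pi-def}. Since $h\mapsto Q_{t^m}h$ maps $\hlp$, isometrically up to a constant, into $T_2^{p(\cdot)}(\urn)$ because $\|Q_{t^m}h\|_{T_2^{p(\cdot)}(\urn)}=\|S_L(h)\|_{\vlp}=\|h\|_{\hlp}$, Proposition \ref{p-dual}(ii) shows that $\ell_0$ is a bounded linear functional on $\hlp$ with $\|\ell_0\|_{(\hlp)^\ast}\ls\|F\|_{T_{2,\fz}^{p(\cdot)}(\urn)}$. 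The crucial ingredient is the reproducing identity
$$\ell_0(h)=\int_\rn h(x)\,g(x)\,dx\qquad\text{for all }h\in\hlp\cap L^2(\rn),$$
which is precisely Lemma \ref{l-3.26x} with the $(p(\cdot),s,L)$-molecule replaced by an arbitrary $h\in\hlp\cap L^2(\rn)$: its proof uses only $h\in\hlp\cap L^2(\rn)$, namely, one starts from the Calder\'on reproducing formula $h=C_{(m,s)}\int_0^\fz Q_{s,t^m}(I-P_{s_0,t^m})Q_{t^m}h\,\frac{dt}{t}$ in $L^2(\rn)$ (Remark \ref{r-3.14x}), pairs against $g$, moves $Q_{s,t^m}^\ast(I-P_{s_0,t^m}^\ast)$ onto $g$, and applies Fubini's theorem, the double integral being absolutely convergent since $Q_{t^m}h\in T_2^{p(\cdot)}(\urn)$, $F\in T_{2,\fz}^{p(\cdot)}(\urn)$ and Proposition \ref{p-dual} (one truncates to $\vez<t<1/\vez$, $|x|<R$ and passes to the limit, as in the proof of \cite[Proposition 5.1]{dy05} underlying Lemma \ref{l-3.26x}).

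Granting the reproducing identity, I would conclude as in the last part of the proof of Theorem \ref{t-dual}(ii). For any cube $Q:=Q(x_Q,r_Q)\st\rn$, using duality on $L^2(Q)$, the adjoint identity $\int_\rn(g-P_{s_0,r_Q^m}^\ast g)u=\int_\rn g\,(I-P_{s_0,r_Q^m})u$ for $u\in L^2(Q)$, the reproducing identity applied to $h:=(I-P_{s_0,r_Q^m})u\in\hlp\cap L^2(\rn)$, and Lemma \ref{l-dual},
\begin{eqnarray*}
\lf\{\int_Q\lf|g(x)-P_{s_0,r_Q^m}^\ast g(x)\r|^2\,dx\r\}^{1/2}
&&=\sup_{\|u\|_{L^2(Q)}\le1}\lf|\int_\rn g(x)\,(I-P_{s_0,r_Q^m})u(x)\,dx\r|\\
&&=\sup_{\|u\|_{L^2(Q)}\le1}\lf|\ell_0\lf((I-P_{s_0,r_Q^m})u\r)\r|\\
&&\ls\|\ell_0\|_{(\hlp)^\ast}\sup_{\|u\|_{L^2(Q)}\le1}\lf\|(I-P_{s_0,r_Q^m})u\r\|_{\hlp}\\
&&\ls\|F\|_{T_{2,\fz}^{p(\cdot)}(\urn)}\,|Q|^{-1/2}\|\chi_Q\|_{\vlp}.
\end{eqnarray*}
Multiplying by $|Q|^{1/2}/\|\chi_Q\|_{\vlp}$ and taking the supremum over all cubes $Q$ of $\rn$ gives $g\in{\rm BMO}_{p(\cdot),L^\ast}^{s_0}(\rn)$ with $\|g\|_{{\rm BMO}_{p(\cdot),L^\ast}^{s_0}(\rn)}\ls\|F\|_{T_{2,\fz}^{p(\cdot)}(\urn)}$, which together with the necessity direction yields the asserted equivalence. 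I expect no serious obstacle here: the only point needing care is to confirm that the argument proving Lemma \ref{l-3.26x} (a $p(\cdot)$-analogue of \cite[Proposition 5.1]{dy05}) applies to every $h\in\hlp\cap L^2(\rn)$, not just to molecules --- which it does, since it rests solely on the Carleson condition for $d\mu_g$, the membership $Q_{t^m}h\in T_2^{p(\cdot)}(\urn)$ and the kernel decay \eqref{assump2} --- after which the corollary follows by assembling the pieces above, exactly as in \cite[Theorem 4.5]{jyz09}.
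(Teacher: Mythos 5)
Your proposal is correct and takes essentially the same route the paper intends: Proposition \ref{p-carleson} (applied to $L^\ast$, which as you note satisfies Assumptions (A) and (B) whenever $L$ does) yields the necessity together with the bound $\|Q_{s,t^m}^\ast(I-P_{s_0,t^m}^\ast)g\|_{T_{2,\fz}^{p(\cdot)}(\urn)}\ls\|g\|_{{\rm BMO}_{p(\cdot),L^\ast}^{s_0}(\rn)}$, while the sufficiency re-runs the closing argument of Theorem \ref{t-dual}(ii) (tent-space duality from Proposition \ref{p-dual}, the reproducing identity of Remark \ref{r-3.14x}/Lemma \ref{l-3.26x}, and Lemma \ref{l-dual}). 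Your remark that the proof of Lemma \ref{l-3.26x} extends from molecules to arbitrary $h\in\hlp\cap L^2(\rn)$ is the only place one must pause, and you justify it correctly: the molecule hypothesis there serves only to guarantee $Q_{t^m}h\in T_2^{p(\cdot)}(\urn)$, which for general $h$ is already supplied by $h\in\hlp$.
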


\section{Applications\label{s5}}
\hskip\parindent
In this section, we give out two applications
of the molecular characterizations of the spaces $H_L^{p(\cdot)}(\rn)$ established in
Theorem \ref{t-mol}.
One is to investigate the coincidence between the spaces
$H_L^{p(\cdot)}(\rn)$ and $\vhs$,
where $\vhs$ denotes the Hardy space with variable exponent introduced by Nakai
and Sawano in \cite{ns12}.
Another is to study the boundedness of the fractional integral
$L^{-\gamma}$ on $H_L^{p(\cdot)}(\rn)$.

\subsection{The coincidence between $H_L^{p(\cdot)}(\rn)$ and $H^{p(\cdot)}(\rn)$}
\hskip\parindent
We begin with recalling the definition of the Hardy space with variable exponent
introduced in \cite{ns12}.
Let $\cs(\rn)$ be the \emph{space of all Schwartz functions} and
$\cs'(\rn)$ its \emph{topological dual space}. For any $N\in\nn$, let
$$\cf_N(\rn):=\lf\{\psi\in\cs(\rn):\ \sum_{\beta\in\zz_+^n,\,|\beta|\le N}
\sup_{x\in\rn}(1+|x|)^N|D^\beta\psi(x)|\le1\r\},$$
where, for all $\beta:=(\beta_1,\dots,\beta_n)\in\zz_+^n$,
$|\beta|:=\beta_1+\cdots+\beta_n$ and
$D^\beta:=(\frac{\partial}{\partial x_1})^{\beta_1}
\cdots (\frac{\partial}{\partial x_n})^{\beta_n}$.
Then, for all $f\in\cs'(\rn)$, the \emph{grand maximal function}
$f^\ast_N$ is defined by setting, for all $x\in\rn$,
$$f^\ast_N(x):=\sup\lf\{|f\ast \psi_t(x)|:\ t\in(0,\fz)\ {\rm and}\
\psi\in\cf_N(\rn)\r\},$$
where, for all $t\in(0,\fz)$ and $\xi\in\rn$, $\psi_t(\xi):=t^{-n}\psi(\xi/t)$.

\begin{definition}
Let $p(\cdot)\in C^{\log}(\rn)$ and
$N\in (\frac n{p_-}+n+1,\fz)$. Then the \emph{Hardy space with variable
exponent $p(\cdot)$}, denoted by $\vhs$, is defined to be the set of all
$f\in\cs'(\rn)$ such that $f^\ast_N\in\vlp$, equipped with the quasi-norm
$\|f\|_{\vhs}:=\|f^\ast_N\|_{\vlp}$.
\end{definition}

\begin{remark}
In \cite[Theorem 3.3]{ns12}, it was proved that the space $\vhs$ is independent
of $N$ as long as $N$ is sufficiently large. Although the range of
$N$ is not presented explicitly in \cite[Theorem 3.3]{ns12},
it was pointed out in \cite[Remark 1.3(ii)]{zyl14} that $N\in (\frac n{p_-}+n+1,\fz)$
does the work.
\end{remark}

In what follows, suppose that $L$ is a linear operator of type $\nu$ on $L^2(\rn)$
with $\nu\in(0,\frac \pi2)$. Then it generates an analytic semigroup
$\{e^{-zL}\}_{z}$, where $z\in\cc$ satisfies $0\le |\arg(z)|<\frac \pi2-\nu$.
Following \cite{yan08}, we assume that the kernels of $\{e^{-tL}\}_{t>0}$,
$\{p_t\}_{t>0}$,
satisfy the following conditions:
there exist positive
constants $C$, $m$ and $\tau\in(n(\frac1{p_-}-1),1]$ such that, for all $t\in(0,\fz)$ and $x,\,y,\,h\in\rn$,
\begin{equation}\label{5.1x1}
|p_t(x,y)|\le C\frac{t^{1/m}}{(t^{1/m}+|x-y|)^{n+1}},
\end{equation}
\begin{equation}\label{5.1x2}
|p_t(x+h,y)-p_t(x,y)|+|p_t(x,y+h)-p_t(x,y)|
\le C\frac{t^{1/m}}{(t^{1/m}+|x-y|)^{n+1+\gamma}}|h|^\tau
\end{equation}
when $2|h|\le t^{1/m}+|x-y|$, and
\begin{equation}\label{5.1x3}
\int_\rn p_t(z,y)\,dz=1=\int_\rn p_t(x,z)\,dz.
\end{equation}

\begin{theorem}\label{t-5.2x}
Let $L$ be a linear operator of type $v$ on $L^2(\rn)$
with $v\in(0,\frac \pi2)$ and its heat kernel satisfies
\eqref{5.1x1}, \eqref{5.1x2} and
\eqref{5.1x3}. Assume that $p(\cdot)\in C^{\log}(\rn)$ satisfies $p_+\in(0,1]$,
$p_-\in (\frac n{n+1},1]$ and $\frac 2{p_-}-\frac1{p_+}<\frac{n+1}n$,
where $p_-$ and $p_+$ are as in \eqref{2.1x}.
Then $\vhs$ and $H_L^{p(\cdot)}(\rn)$
coincide with equivalent quasi-norms.
\end{theorem}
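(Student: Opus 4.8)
The plan is to prove the coincidence $\vhs = H_L^{p(\cdot)}(\rn)$ with equivalent quasi-norms by establishing the two inclusions separately, each via the molecular/atomic characterizations already available: Theorem~\ref{t-mol} on the operator side and the atomic characterization of $\vhs$ from \cite{ns12,Sa13} on the classical side. The overarching strategy follows the template of \cite[Section~5]{yan08} and \cite[Section~6]{jyz09}, adapted to the variable-exponent setting by systematically replacing quantities like $|Q|^{1/p}$ with $\|\chi_Q\|_{\vlp}$ and invoking Lemma~\ref{l-bigsball}, Lemma~\ref{l-estimate} and Remark~\ref{r-hlmo} wherever a summation over dilated cubes must be controlled.

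First I would show $H_L^{p(\cdot)}(\rn)\hookrightarrow\vhs$. By Theorem~\ref{t-mol}(i), it suffices to prove that every $(p(\cdot),s,L)$-molecule $\az=\pi_L(a)$, with $a$ a $(p(\cdot),\fz)$-atom supported on $\wh Q$, satisfies $\|\az\|_{\vhs}\le C|Q|^{-1/2}\|\chi_Q\|_{\vlp}\cdot|Q|^{1/2}\|\chi_Q\|_{\vlp}^{-1}$-type bounds that assemble, via Lemma~\ref{l-estimate} and the quasi-subadditivity \eqref{simple-ineq} of $\|\cdot\|_{\vlp}^{\underline p}$, into $\|f\|_{\vhs}\ls\cb(\{\lz_j\az_j\}_j)\ls\|f\|_{H_L^{p(\cdot)}(\rn)}$. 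Concretely, using the cancellation built into $\pi_L$ (the factor $(I-P_{s_0,t^m})$ provides $s_0+1$ vanishing-moment-type decay after pairing against the Schwartz kernels defining $f^\ast_N$) together with the pointwise heat-kernel bounds \eqref{assump1}, \eqref{partial1} and \eqref{5.1x1}, one obtains the off-diagonal estimate $(f^\ast_N)(\az)(x)\ls (r_Q)^{\tau+n/2}|x-x_Q|^{-(n+\tau)}\|a\|_{T_2^2(\urn)}$ for $x\notin 4Q$ and an $L^q$-bound $\|(f^\ast_N)(\az)\|_{L^q(4Q)}\ls|Q|^{1/q}\|\chi_Q\|_{\vlp}^{-1}$ on the local part (using boundedness of the grand maximal function on $L^q$ together with Proposition~\ref{p-tent2}(i)). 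Splitting $\rn$ into $4Q$ and the annuli $U_i(Q)$ and summing exactly as in the proof of Proposition~\ref{p-tent2} — the geometric series in $i$ converges because $\tau>n(\frac1{p_-}-1)$, i.e. $n+\tau-n/r>0$ for suitable $r\in(\frac n{n+\tau},p_-)$ — gives the claim. Here the hypothesis $\frac2{p_-}-\frac1{p_+}<\frac{n+1}n$ is exactly what is needed so that, after applying Lemma~\ref{l-bigsball} with exponent $1/p_+$ on the "larger cube vs.\ smaller cube" ratios in the annular estimates, the remaining exponent $n+1-n(\frac2{p_-}-\frac1{p_+})$ (with $\tau$ replaced by its endpoint $1$) stays positive.

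Second, for $\vhs\hookrightarrow H_L^{p(\cdot)}(\rn)$, I would use the atomic decomposition of $\vhs$ from \cite[Theorem~4.6]{ns12} (or \cite[Theorem~1.1]{Sa13}): any $f\in\vhs$ equals $\sum_j\lz_j b_j$ with $b_j$ classical $(p(\cdot),q,s)$-atoms (bounded, supported in a cube $Q_j$, with vanishing moments up to order $s\ge\lfloor n(\frac1{p_-}-1)\rfloor$) and $\ca(\{\lz_j\}_j,\{Q_j\}_j)\ls\|f\|_{\vhs}$. It then suffices to show $\|b\|_{H_L^{p(\cdot)}(\rn)}\ls|Q|^{1/2}\|\chi_Q\|_{\vlp}^{-1}\cdot$ (a uniform constant) for a single classical atom $b$ on $Q$, after which Theorem~\ref{t-mol}(ii)-style assembly (or directly the definition of $\|\cdot\|_{H_L^{p(\cdot)}(\rn)}$ via $S_L$, using the Fatou lemma in $\vlp$ and Lemma~\ref{l-estimate}) finishes the job. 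For the single-atom estimate one controls $S_L(b)$: on $4Q$ by $L^2$-boundedness \eqref{SL-bounded} of $S_L$ and $\|b\|_{L^2}\le|Q|^{1/2}\|b\|_{L^\fz}\ls|Q|^{1/2-1/2}\|\chi_Q\|_{\vlp}^{-1}$ via the atom normalization, and off $4Q$ by exploiting the moment conditions on $b$ against the smooth kernel $q_{t^m}(x,y)$ of $Q_{t^m}=t^mLe^{-t^mL}$, whose space-regularity \eqref{5.1x2} with exponent $\tau$ yields the decay $S_L(b)(x)\ls (r_Q)^{\tau+n/2}|x-x_Q|^{-(n+\tau)}$ — again summable over annuli under $\tau>n(\frac1{p_-}-1)$. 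A technical point: one must verify $f\in L^2(\rn)$ (or pass to $L^2\cap H$ densely) so that $S_L(f)$ makes sense and the decomposition is compatible with the completion in Definition~\ref{d-hardys}; this is handled by working with finite atomic sums and using Corollary~\ref{c-dense} together with the density statements in the remarks following Definition~\ref{d-hardys}.

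The main obstacle I anticipate is not any single estimate but the bookkeeping of the variable exponent in the annular sums: because $\|\cdot\|_{\vlp}$ lacks translation invariance and scaling homogeneity, every step where the classical argument would write "$|2^iQ|^{1/p}/|Q|^{1/p}=2^{in/p}$" must instead be routed through Lemma~\ref{l-bigsball}, which only gives the two-sided bound with the \emph{different} exponents $1/p_-$ and $1/p_+$; keeping track of which direction of the inequality is needed (and hence which of $p_-,p_+$ appears) at each occurrence is where the hypotheses $p_-\in(\frac n{n+1},1]$ and $\frac2{p_-}-\frac1{p_+}<\frac{n+1}n$ get consumed, and a sign error there would break convergence of the geometric series. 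The secondary difficulty is checking that the regularity exponent available from \eqref{5.1x2}, namely $\tau\in(n(\frac1{p_-}-1),1]$, is simultaneously enough for \emph{both} inclusions and compatible with the molecule order $s_0=\lfloor\frac nm(\frac1{p_-}-1)\rfloor$ appearing in Theorem~\ref{t-mol} and with the atom order $s$ in the $\vhs$-decomposition; this is a matter of choosing the free parameters $r,\vez,s$ consistently, and I would fix them at the outset of the proof.
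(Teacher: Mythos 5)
Your plan for the inclusion $\vhs\hookrightarrow H_L^{p(\cdot)}(\rn)$ matches the paper's proof closely: take the atomic decomposition of $\vhs$ from Lemma \ref{l-4.4x}, estimate $S_L(b)$ for a classical $(p(\cdot),2,0)$-atom $b$ on $4Q$ by \eqref{SL-bounded} and off $4Q$ by the kernel bounds \eqref{5.2x1}--\eqref{5.3x3} together with the zero-moment of $b$, and sum via Lemma \ref{l-estimate} and Remark \ref{r-hlmo}. (The paper additionally splits the off-diagonal part into $t<r_{R_j}$ and $t>r_{R_j}$, because the moment condition only helps at large scales; your sketch omits this, but it is a routine detail.) For the converse inclusion $H_L^{p(\cdot)}(\rn)\hookrightarrow\vhs$, however, you take a genuinely different route. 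The paper does \emph{not} estimate the grand maximal function $\az^\ast_N$ of a molecule $\az=\pi_L(a)$ directly. Instead, it performs a Calder\'on–Zygmund-style decomposition of $\az$ on annuli $D_k(R)$, writing $\az=\sum_k h_k+\sum_k N_{k+1}(\wz\chi_{k+1}-\wz\chi_k)$ where each $h_k$ and each telescoping piece is, after normalization by $2^{k(n+\delta-n/p_-)}$, a classical $(p(\cdot),2,0)$-atom; the map then lands directly in $H_{\rm at}^{p(\cdot),2}(\rn)=\vhs$. This is cleaner than your route because the vanishing moments are manufactured by the decomposition, not extracted from $\pi_L$.

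Your route is not wrong in spirit, but be careful about two points. First, the "cancellation built into $\pi_L$ (the factor $(I-P_{s_0,t^m})$)" does \emph{not} by itself give $\int_\rn\az(x)\,dx=0$; that cancellation actually comes from the conservation property \eqref{5.1x3} (which forces $\int_\rn q_t(x,z)\,dz=0$), combined with the pointwise decay $|\az(x)|\ls 2^{-k(n+\delta)}\|\chi_R\|_{\vlp}^{-1}$ on $D_k(R)$ which makes $\az\in L^1(\rn)$. The paper implicitly uses exactly this when it telescopes $\sum_k l_k\wz\chi_k=\sum_k N_{k+1}(\wz\chi_{k+1}-\wz\chi_k)$, which only works because $\sum_k l_k=\int\az=0$. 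If you want to estimate $\az^\ast_N$ directly, you must first prove this cancellation and then split the convolution $\psi_t*\az$ over annuli in $y$ and scales in $t$ — the asserted bound $\az^\ast_N(x)\ls (r_Q)^{\tau+n/2}|x-x_Q|^{-(n+\tau)}\|a\|_{T_2^2}$ is plausible but is stated here by analogy with the $S_L$-decay from \cite[(4.12)]{jyz09}, not derived; the exponent should really be driven by the $\delta$ in \eqref{5.2x1} and the cancellation, not by the smoothness index $\tau$ of the heat kernel. Second, you correctly locate where the hypothesis $\frac{2}{p_-}-\frac{1}{p_+}<\frac{n+1}{n}$ enters: it is consumed precisely when Lemma \ref{l-bigsball} is applied in the unfavorable direction $\|\chi_R\|_{\vlp}/\|\chi_{2^kR}\|_{\vlp}\ls 2^{-kn/p_+}$, which is the step where the paper requires $\delta\in(n[\frac{2}{p_-}-1-\frac{1}{p_+}],1)$ to be nonempty; this is reassuring evidence that your bookkeeping is on the right track even if the maximal-function estimate itself needs to be filled in.
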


\begin{remark}
Obviously, if $L=-\Delta$, then its heat kernel satisfies
\eqref{5.1x1}, \eqref{5.1x2} and
\eqref{5.1x3}. It was also pointed out by Yan \cite[p.\,4405, Remark]{yan08}
that the assumptions \eqref{5.1x1}, \eqref{5.1x2} and
\eqref{5.1x3} are satisfied by the divergence form operator
$L:=-{\rm div}\,(A\nabla)$ when $A$ has real entries, or when the dimension
$n=1$ or 2 in the case of complex entries; see also \cite{dy05,dy05cpam,jyz09}
for some other examples.
\end{remark}

To prove Theorem \ref{t-5.2x}, we need the atomic characterization of $\vhs$.
Let $p(\cdot)\in\cp(\rn)$, $q\in[1,\fz]\cap(p_+,\fz]$
and $d:=\max\{0,\lfloor n(1/p_--1)\rfloor\}$. Recall that
a function $a$ on $\rn$ is called a \emph{$(p(\cdot),q,d)$-atom} if $a$ satisfies

(i) $\supp a\st R$ for some cube $R\st \rn$;

(ii) $\|a\|_{L^q(\rn)}\le\frac{|R|^{1/q}}{\|\chi_R\|_{\vlp}}$;

(iii) $\int_\rn a(x)x^\beta\,dx=0$ for all $\beta\in\zz_+^n$ with
$|\bz|\le d$.

\begin{definition}\label{d-atomh}
Let $p(\cdot)\in C^{\log}(\rn)$, $q\in[1,\fz]\cap(p_+,\fz]$
and $d:=\max\{0,\lfloor n(1/p_--1)\rfloor\}$ with $p_-$ and $p_+$ as in \eqref{2.1x}.
Then the \emph{atomic Hardy space}
$H_{\rm at}^{p(\cdot),q}(\rn)$ is defined to be the set of all $f\in\cs'(\rn)$
such that $f$ can be written as $f=\sum_{j\in\nn}\lz_ja_j$ in $\cs'(\rn)$,
where $\{\lz_j\}_{j\in\nn}\st\cc$ and $\{a_j\}_{j\in\nn}$ are $(p(\cdot),q,d)$-atoms
satisfying that, for each $j\in\nn$,
$\supp a_j\st R_j$ for some cube $R_j\st\rn$
and $\ca(\{\lz_j\}_{j\in\nn},\{R_j\}_{j\in\nn})<\fz$,
where $\ca(\{\lz_j\}_{j\in\nn},\{R_j\}_{j\in\nn})$ is as in \eqref{3.1y} with
$\{Q_j\}_{j\in\nn}$ replaced by $\{R_j\}_{j\in\nn}$.

Moreover, for any $f\in H_{\rm at}^{p(\cdot),q}(\rn)$, its quasi-norm
is defined by
$$\|f\|_{H_{\rm at}^{p(\cdot),q}(\rn)}
:=\inf\{ \ca(\{\lz_j\}_{j\in\nn},\{R_j\}_{j\in\nn})\},$$
where the infimum is taken over all admissible decompositions of $f$ as above.
\end{definition}

The following lemma was originally established by Nakai and Sawano in \cite[Theorem 4.6]{ns12}
and further improved by Sawano in \cite[Theorem 1.1]{Sa13}.

\begin{lemma}\label{l-4.4x}
Let $p(\cdot)\in C^{\log}(\rn)$ and $q\in[1,\fz]\cap(p_+,\fz]$ with
$p_+$ as in \eqref{2.1x}.
Then the spaces $\vhs$ and $H_{\rm at}^{p(\cdot),q}(\rn)$ coincide with equivalent
quasi-norms.
\end{lemma}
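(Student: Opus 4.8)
The plan is to prove the two continuous inclusions $H_{\rm at}^{p(\cdot),q}(\rn)\hookrightarrow\vhs$ and $\vhs\hookrightarrow H_{\rm at}^{p(\cdot),\fz}(\rn)$ and then close the circle. Since every $(p(\cdot),\fz,d)$-atom is, up to the constant in Definition \ref{d-atomh}, also a $(p(\cdot),q,d)$-atom for any $q\in[1,\fz]\cap(p_+,\fz]$ (by H\"older's inequality together with Lemma \ref{l-bigsball}), these two inclusions yield the chain $\vhs\hookrightarrow H_{\rm at}^{p(\cdot),\fz}(\rn)\hookrightarrow H_{\rm at}^{p(\cdot),q}(\rn)\hookrightarrow\vhs$ with all quasi-norms comparable, which is exactly the asserted coincidence. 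The whole argument follows the Fefferman--Stein grand-maximal-function scheme of the classical $H^p(\rn)$ theory, the role of the scalar $L^p$-estimates being played by the Fefferman--Stein vector-valued maximal inequality on $\vlp$ (Lemma \ref{l-hlmo}), by Sawano's technical estimate (Lemma \ref{l-estimate}) and by the comparison of $\|\chi_Q\|_{\vlp}$ over nested cubes (Lemma \ref{l-bigsball}).

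For $H_{\rm at}^{p(\cdot),q}(\rn)\hookrightarrow\vhs$, I would first record the pointwise behaviour of the grand maximal function of a single $(p(\cdot),q,d)$-atom $a$ supported in a cube $R$: the $d+1$ vanishing moments of $a$, a Taylor expansion of the test functions in $\cf_N(\rn)$ and the bound $\|a\|_{L^1(\rn)}\le|R|\,\|\chi_R\|_{\vlp}^{-1}$ give, for $x\notin 4R$, $a^\ast_N(x)\ls\|\chi_R\|_{\vlp}^{-1}[\cm(\chi_R)(x)]^{(n+d+1)/n}$, while on $4R$ one uses the $L^q$-normalisation of $a$ and the $L^q$-boundedness of $\cm$ and of the grand maximal operator. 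Then, given $f=\sum_j\lz_ja_j$ with $\supp a_j\st R_j$, I would split $f^\ast_N\le\sum_j|\lz_j|(a_j)^\ast_N\chi_{4R_j}+\sum_j|\lz_j|(a_j)^\ast_N\chi_{(4R_j)^\complement}$, dominate each sum by $\{\sum_j(\cdots)^{\underline p}\}^{1/\underline p}$ using \eqref{simple-ineq}, and estimate: for the near part invoke Lemma \ref{l-estimate} (after replacing $\|\chi_{4R_j}\|_{\vlp}$ by $\|\chi_{R_j}\|_{\vlp}$ via Lemma \ref{l-bigsball}); for the far part write $[\cm(\chi_{R_j})]^{(n+d+1)/n}$ as a power of a maximal function at a higher integrability scale and apply Lemma \ref{l-hlmo}, which is legitimate because the choice $d=\max\{0,\lfloor n(1/p_--1)\rfloor\}$ forces $p_-\,(n+d+1)/n>1$. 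Both pieces are thus bounded by $\ca(\{\lz_j\}_{j\in\nn},\{R_j\}_{j\in\nn})$ (the far part using Remark \ref{r-hlmo} as well), so $f^\ast_N\in\vlp$, i.e. $f\in\vhs$, with $\|f\|_{\vhs}\ls\|f\|_{H_{\rm at}^{p(\cdot),q}(\rn)}$.

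For $\vhs\hookrightarrow H_{\rm at}^{p(\cdot),\fz}(\rn)$, I would run the Calder\'on--Zygmund decomposition of $f\in\cs'(\rn)$ at the dyadic heights of $f^\ast_N$. For $k\in\zz$ put $\Omega_k:=\{x\in\rn:\ f^\ast_N(x)>2^k\}$; since $f^\ast_N\in\vlp$, each $\Omega_k$ is open with $|\Omega_k|<\fz$ and $|\Omega_k|\to0$ as $k\to\fz$. Take a Whitney decomposition $\{Q_{k,j}\}_j$ of $\Omega_k$ with bounded overlap, a subordinate smooth partition of unity $\{\zeta_{k,j}\}_j$, and bad parts $b_{k,j}:=(f-c_{k,j})\zeta_{k,j}$ with $c_{k,j}$ a polynomial of degree $\le d$ normalising the first $d+1$ moments of $b_{k,j}$; set $g_k:=f-\sum_jb_{k,j}$, so $\|g_k\|_{L^\fz(\rn)}\ls 2^k$. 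Then $g_k\to f$ in $\cs'(\rn)$ as $k\to\fz$ and $g_k\to0$ in $\cs'(\rn)$ as $k\to-\fz$ (the latter from $|\langle g_k,\phi\rangle|\ls 2^k\|\phi\|_{L^1(\rn)}$), so $f=\sum_{k\in\zz}(g_{k+1}-g_k)$ in $\cs'(\rn)$. Expressing $g_{k+1}-g_k$ in the standard way as a combination of the $b_{k,j}$ and correction terms gives $g_{k+1}-g_k=\sum_jA_{k,j}$ with $A_{k,j}$ supported in a fixed dilate of $Q_{k,j}$, having the required vanishing moments and $\|A_{k,j}\|_{L^\fz(\rn)}\ls 2^k$; thus $A_{k,j}=\lz_{k,j}a_{k,j}$ with $a_{k,j}$ a $(p(\cdot),\fz,d)$-atom and $\lz_{k,j}\sim 2^k\|\chi_{Q_{k,j}}\|_{\vlp}$. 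Finally, the bounded overlap of the $Q_{k,j}$ and $\bigcup_jQ_{k,j}\st\Omega_k$ give the pointwise geometric estimate $\sum_{k,j}[2^k\chi_{Q_{k,j}}(x)]^{\underline p}\ls\sum_k2^{k\underline p}\chi_{\Omega_k}(x)\ls[f^\ast_N(x)]^{\underline p}$, hence $\ca(\{\lz_{k,j}\},\{Q_{k,j}\})\ls\|f^\ast_N\|_{\vlp}=\|f\|_{\vhs}$, so $f\in H_{\rm at}^{p(\cdot),\fz}(\rn)$ with controlled quasi-norm.

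The main obstacle is the second inclusion, in two respects. First, the identity $f=\sum_{k,j}\lz_{k,j}a_{k,j}$ must genuinely be justified in $\cs'(\rn)$ (and, a posteriori, in the $\vhs$-quasi-norm via the first inclusion applied to tails), which forces one to run the delicate $\cs'$-convergence of the good functions $g_k$ with the log-H\"older continuity of $p(\cdot)$ and the $\vlp$-membership of $f^\ast_N$ standing in for the scalar $L^p$-bounds of the classical proof. Second --- and this is precisely why the full range $q\in[1,\fz]\cap(p_+,\fz]$ is reached, rather than a proper subinterval --- in the first inclusion one cannot afford any H\"older "gap" between $q$ and $p_+$ when passing from the $L^q$-size of an atom on its supporting cube to the comparison function $\chi_R$; supplying that step is exactly the content of Sawano's Lemma \ref{l-estimate}, and using it in place of a cruder inequality is what upgrades the Nakai--Sawano theorem to its sharp form.
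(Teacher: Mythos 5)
The paper does not prove this statement at all: Lemma \ref{l-4.4x} is imported verbatim from Nakai--Sawano \cite[Theorem 4.6]{ns12} as improved by Sawano \cite[Theorem 1.1]{Sa13}, so there is no in-paper proof to compare yours against. What you have written is a faithful outline of the standard proof from those references: the inclusion $H_{\rm at}^{p(\cdot),q}(\rn)\hookrightarrow\vhs$ via the pointwise estimate of $a^\ast_N$ for a single atom (near part by the $L^q$-normalisation, far part by the moment conditions and $[\cm(\chi_R)]^{(n+d+1)/n}$), summed up with Lemma \ref{l-estimate} and Lemma \ref{l-hlmo}; and the reverse inclusion via the Calder\'on--Zygmund decomposition of $f$ at the dyadic level sets of $f^\ast_N$, producing $(p(\cdot),\fz,d)$-atoms with $\lz_{k,j}\sim 2^k\|\chi_{Q_{k,j}}\|_{\vlp}$ and the pointwise bound $\sum_{k,j}[2^k\chi_{Q_{k,j}}]^{\underline{p}}\ls [f^\ast_N]^{\underline{p}}$. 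Closing the circle through $H_{\rm at}^{p(\cdot),\fz}(\rn)$ is exactly how the $q$-independence is obtained in the literature, and your observation that $d=\max\{0,\lfloor n(1/p_--1)\rfloor\}$ forces $p_-(n+d+1)/n>1$, so that Lemma \ref{l-hlmo} applies after rescaling, is the correct justification for the far part.

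One point in your sketch does not go through as written at the endpoint $q=1$ (which the statement permits whenever $p_+<1$): your near-part estimate relies on the $L^q$-boundedness of the grand maximal operator on $4R$, which fails for $q=1$, and $(a)^\ast_N$ need not even belong to $L^1(4R)$ for an $L^1$-normalised atom. The standard repair is a Kolmogorov-type inequality giving $\|(a)^\ast_N\|_{L^{q'}(4R)}\ls|R|^{1/q'}\|\chi_R\|_{\vlp}^{-1}$ for $q'<1$, but then Lemma \ref{l-estimate} as stated (which requires $q'\in[1,\fz)\cap(p_+,\fz)$) no longer applies directly, and one needs the sub-unit variant of that lemma (or Sawano's original argument) to resum. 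This is a genuine, if localized, gap; for $q\in(1,\fz]$ your outline is complete in structure and matches the cited proof.
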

\begin{proof}[Proof of Theorem \ref{t-5.2x}]
To prove this theorem, by Lemma \ref{l-4.4x},
it suffices to show that $H_{\rm at}^{p(\cdot),2}(\rn)$ and
$H_L^{p(\cdot)}(\rn)$ coincide with equivalent quasi-norms.
Since $p_-\in(\frac n{n+1},1]$, it follows that
$d=\max\{0,\lfloor n(1/p_--1)\rfloor\}=0$ in this case.

We first show that $H_{\rm at}^{p(\cdot),2}(\rn)\st H_L^{p(\cdot)}(\rn)$.
To this end, let $q_t$ be the kernel of the operator $Q_t$.
Then, by \cite[Lemma 6.10]{dy05}
(see also \cite[p.\,4404]{yan08}), we find that,
for any $\gamma\in(n[\frac 1{p_-}-1],\tau)$ and $\delta\in(0,1)$,
there exists a positive constant
$C$ such that, for all $t\in(0,\fz)$ and $x,\,y,\,h\in\rn$,
\begin{equation}\label{5.2x1}
|q_t(x,y)|\le C\frac{t^{\delta/m}}{(t^{1/m}+|x-y|)^{n+\delta}},
\end{equation}
\begin{equation}\label{5.2x2}
|q_t(x+h,y)-q_t(x,y)|+|q_t(x,y+h)-q_t(x,y)|
\le C\frac{t^{\delta/m}}{(t^{1/m}+|x-y|)^{n+\delta+\gz}}|h|^\gz
\end{equation}
when $2|h|\le t^{1/m}+|x-y|$, and
\begin{equation}\label{5.3x3}
\int_\rn q_t(z,y)\,dz=1=\int_\rn q_t(x,z)\,dz.
\end{equation}

Let $f\in H_{\rm at}^{p(\cdot),2}(\rn)$. Then, by Definition \ref{d-atomh}, we
see that $f$ has an atomic decomposition $f=\sum_{j\in\nn}\lz_ja_j$,
where $\{\lz_j\}_{j\in\nn}\st\cc$ and $\{a_j\}_{j\in\nn}$
are $(p(\cdot),2,0)$-atoms such that, for each $j\in\nn$,
$\supp a_j\st R_j$ with some cube $R_j\st\rn$, and
\begin{equation}\label{5.3x3x}
\cb(\{\lz_ja_j\}_{j\in\nn})\ls \|f\|_{H_{\rm at}^{p(\cdot),2}(\rn)}.
\end{equation}
Thus, we have
\begin{eqnarray*}
\|S_L(f)\|_{\vlp}
&&\le \lf\|\sum_{j\in\nn}|\lz_j|S_L(a_j)\r\|_{\vlp}\\
&&\le \lf\|\sum_{j\in\nn}|\lz_j|S_L(a_j)\chi_{4R_j}\r\|_{\vlp}+
\lf\|\sum_{j\in\nn}|\lz_j|S_L(a_j)\chi_{(4R_j)^\complement}\r\|_{\vlp}\\
&&=:{\rm I}+{\rm II}.
\end{eqnarray*}

For I, since, due to \eqref{SL-bounded},
$\|S_L(a_j)\|_{L^2(\rn)}\ls \|a_j\|_{L^2(\rn)}
\ls \frac{|R_j|^{1/2}}{\|\chi_{R_j}\|_{\vlp}}$,
it follows, from Lemma \ref{l-estimate}, that
${\rm I}\ls\cb(\{\lz_ja_j\}_{j\in\nn})\ls\|f\|_{H_{\rm at}^{p(\cdot),2}(\rn)}$.

 Next, we estimate the term II.
For all $x\in (4R_j)^\complement$, we have
\begin{eqnarray*}
S_L(a_j)(x)
&&\le \lf\{\int_0^{r_{R_j}}\int_{B(x,t)}
|Q_{t^m}(a_j)(y)|^2\,\frac{dydt}{t^{n+1}}\r\}^{1/2}+
\lf\{\int_{r_{R_j}}^\fz\int_{B(x,t)}\cdots\r\}^{1/2}\\
&&=:{\rm II}_1(x)+{\rm II}_2(x).
\end{eqnarray*}
Observe that, when $x\in (4R_j)^\complement$, $|x-y|<t$ and
$z\in R_j:=Q(x_{R_j},r_{R_j})$ for some $x_{R_j}\in\rn$ and $r_{R_j}\in(0,\fz)$,
we see that
$$t+|y-z|\ge |x-z|\ge\frac12|x-x_{R_j}|.$$
By this, \eqref{5.2x1} and the H\"older inequality, we find that,
for all $x\in (4R_j)^\complement$,
\begin{eqnarray}\label{5.3x4}
{\rm II}_1(x)
&&\ls \lf\{\int_0^{r_{R_j}}\int_{|x-y|<t}\lf[\int_{R_j}
\frac{t^\delta}{(t+|y-z|)^{n+\delta}}|a_j(z)|\,dz\r]^2\,\frac{dydt}{t^{n+1}}
\r\}^{\frac12}\\
&&\ls\frac{(r_{R_j})^\delta}{|x-x_{R_j}|^{n+\delta}}
\|a_j\|_{L^2(R_j)}|R_j|^{\frac12}
\ls\frac{(r_{R_j})^\delta}{|x-x_{R_j}|^{n+\delta}}
\frac{|R_j|}{\|\chi_{R_j}\|_{\vlp}}.\noz
\end{eqnarray}
Choose $\delta\in(n[\frac 1{p_-}-1],1)$ and $r\in (0,p_-)$
such that $n+\delta>\frac nr$.
Then, from \eqref{5.3x3x}, \eqref{5.3x4}, Lemma \ref{l-hlmo} and the fact that,
for all $k\in\nn$,
$\chi_{4^kR_j}\le 2^{k\frac nr}[\cm(\chi_{R_j})]^{1/r}$, we deduce that
\begin{eqnarray}\label{5.3x5}
&&\lf\|\sum_{j\in\nn}|\lz_j|{\rm II}_1(\cdot)\chi_{(4R_j)^\complement}\r\|_{\vlp}\\
&&\hs\ls\lf\|\sum_{k\in\nn}\sum_{j\in\nn}\frac{|\lz_j||R_j|}{\|\chi_{R_j}\|_{\vlp}}
\frac{(r_{R_j})^\delta}{|\cdot-x_{R_j}|^{n+\delta}}
\chi_{(4^{k}R_j)\setminus(4^{k-1}R_j)}\r\|_{\vlp}\noz\\
&&\hs\ls\lf\{\sum_{k\in\nn}4^{-k(n+\delta-\frac nr)}\lf\|\sum_{j\in\nn}
\lf(\frac{|\lz_j|}{\|\chi_{R_j}\|_{\vlp}}[\cm(\chi_{R_j})]^{\frac1r}
\r)^{\underline{p}}\r\|_{L^{\frac{p(\cdot)}{\underline{p}}}(\rn)}
\r\}^{\frac 1{\underline{p}}}\noz\\
&&\hs\ls\lf\|\lf\{\sum_{j\in\nn}
\lf[\cm\lf(\frac{|\lz_j|^r}{\|\chi_{R_j}\|_{\vlp}^r}\chi_{R_j}\r)
\r]^{\underline{p}/r}\r\}^{r/\underline{p}}\r\|_{L^{\frac{p(\cdot)}r}(\rn)}^r\noz\\
&&\hs\ls\lf\|\lf\{\sum_{j\in\nn}\lf[\frac{|\lz_j|}{\|\chi_{R_j}\|_{\vlp}}\chi_{R_j}
\r]^{\underline{p}}\r\}^{\frac1{\underline{p}}}\r\|_{\vlp}
\ls\|f\|_{H_{\rm at}^{p(\cdot),2}(\rn)}.\noz
\end{eqnarray}
On the other hand, by \eqref{5.2x2}, \eqref{5.3x3} and the vanishing
moment condition of $a_j$, we obtain
\begin{eqnarray*}
{\rm II}_2(x)
&&\le\lf\{\int_{r_{R_j}}^\fz\int_{|y-x|<t}\lf[\int_{R_j}
|q_{t^m}(y,z)-q_{t^m}(y,x_{R_j})||a_j(z)|\,dz\r]^2\frac{dydt}{t^{n+1}}
\r\}^{\frac12}\\
&&\ls\lf\{\int_{r_{R_j}}^\fz\int_{|y-x|<t}\lf[\int_{R_j}
\frac{|z-x_{R_j}|^\gz t^\delta}{(t+|y-z|)^{n+\delta+\gz}}|a_j(z)|\,dz\r]^2\frac{dydt}{t^{n+1}}
\r\}^{\frac12}\\
&&\ls \frac{(r_{R_j})^\gz}{|\cdot-x_{R_j}|^{n+\gz-\gz_1}}
\lf\{\int_{r_{R_j}}^\fz\lf[\int_{R_j}|a_j(z)|\,dz\r]^2\,t^{-2\gz_1}\frac{dt}t\r\}^{1/2},
\end{eqnarray*}
where $\gz_1\in(0,\gz)$ such that $\gamma-\gz_1\in(n[\frac1{p_-}-1],1)$,
which, together with the H\"older inequality, implies that,
for all $x\in (4R_j)^\complement$,
\begin{eqnarray*}
{\rm II}_2(x)
\ls \frac{(r_{R_j})^{\gz-\gz_1}}{|x-x_{R_j}|^{n+\gz-\gz_1}}
\frac{|R_j|}{\|\chi_{R_j}\|_{\vlp}}.
\end{eqnarray*}
By this and an argument similar to that used in the proof of \eqref{5.3x5},
we conclude that
\begin{eqnarray*}
\lf\|\sum_{j\in\nn}|\lz_j|{\rm II}_2(\cdot)\chi_{(4R_j)^\complement}\r\|_{\vlp}
\ls\|f\|_{H_{\rm at}^{p(\cdot),2}(\rn)}.
\end{eqnarray*}
This, combined with \eqref{5.3x5}, shows that
${\rm II}\ls \|f\|_{H_{\rm at}^{p(\cdot),2}(\rn)}$.
Therefore, $f\in H_L^{p(\cdot)}(\rn)$ and
$$\|f\|_{H_L^{p(\cdot)}(\rn)}=\|S_L(f)\|_{\vlp}\ls \|f\|_{H_{\rm at}^{p(\cdot),2}(\rn)},$$
which further implies that
$H_{\rm at}^{p(\cdot),2}(\rn)\st H_L^{p(\cdot)}(\rn)$.

Conversely, we prove that $H_L^{p(\cdot)}(\rn)\st H_{\rm at}^{p(\cdot),2}(\rn)$.
Let $\alpha$ be a $(p(\cdot),s,L)$-molecule and $\alpha=\pi_L(a)$, where
$a$ is a $(p(\cdot),\fz)$-atom supported on $\wh R$ for some cube $R\st\rn$.
Let $R:=Q(x_R,r_R)$ with $x_R\in\rn$ and $r_R\in(0,\fz)$,
$D_0(R):=2R$ and, when $k\in\nn$,
$D_k(R):=(2^{k+1}R)\setminus (2^kR)$. Moreover, for any $k\in\zz_+$,
we let
$l_k:=\int_{D_k(R)}\alpha(x)\,dx$ and
$$h_k:=\alpha\chi_{D_k(R)}-\frac{\chi_{D_k(R)}}{|D_k(R)|}\int_{D_k(R)}\alpha(x)\,dx.$$
Then, for all $x\in\rn$, we have
\begin{eqnarray*}
\alpha(x)
&&=\sum_{k\in\zz_+}h_k(x)+\sum_{k\in\zz_+}\frac{l_k}{|D_k(R)|}\chi_{D_k(R)}(x)\\
&&=\sum_{k\in\zz_+}h_k(x)+\sum_{k\in\zz_+}N_{k+1}\lf[\wz\chi_{k+1}(x)
-\wz\chi_k(x)\r]=:{\rm J_1}+{\rm J_2},\noz
\end{eqnarray*}
where, for any $k\in\zz_+$, $N_k:=\sum_{j=k}^\fz l_j$ and
$\wz\chi_k:=\frac{\chi_{D_{k}(R)}}{|D_{k}(R)|}$.

We first deal with ${\rm J}_1$.
Obviously, for all $k\in\zz_+$, $\supp h_k\subset 2^{k+1}R$ and
$\int_\rn h_k(x)\,dx=0$.
Moreover, by the H\"older inequality and Proposition \ref{p-tent2}(i), we see that
$$\|h_0\|_{L^2(\rn)}\ls \|\alpha\|_{L^2(\rn)}\ls\|a\|_{T_2^2(\urn)}
\ls |R|^{-1/2}\|\chi_{R}\|_{\vlp}.$$
Since $\supp a\st \wh R$, it follows that, for all $x\in \rn$,
\begin{eqnarray*}
|\alpha(x)|&&\ls \int_0^{r_R}|Q_{s,t^m}(I-P_{s_0,t^m})(a(\cdot,t))(x)|\,\frac{dt}t\\
&&\ls \int_0^{r_R}|Q_{s,t^m}(a(\cdot,t))(x)|\,\frac{dt}t
+\int_0^{r_R}|(Q_{s,t^m}P_{s_0,t^m})(a(\cdot,t))(x)|\,\frac{dt}t,
\end{eqnarray*}
where $s_0$ is as in \eqref{3.6x} and $s\in[s_0,\fz)$.
By \eqref{5.2x1}, \eqref{partial1} and the H\"older inequality, we find that, for all
$k\in\nn$ and $x\in D_k(R)$,
\begin{eqnarray}\label{5.3x6}
\int_0^{r_R}|Q_{s,t^m}(a(\cdot,t))(x)|\,\frac{dt}t
&&\ls\int_0^{r_R}\int_R\frac{t^\delta}{(t+|x-y|)^{n+\delta}}|a(y,t)|\,\frac{dydt}{t}\\
&&\ls\frac{(r_R)^{\frac n2+\delta}}{|x-x_R|^{n+\delta}}\|a\|_{T_2^2(\urn)}
\ls \frac{2^{-k(n+\delta)}}{\|\chi_R\|_{\vlp}}.\noz
\end{eqnarray}
By an argument similar to that used in the proof of \eqref{carleson-6x}, we also have
$$\int_0^{r_R}|(Q_{s,t^m}P_{s_0,t^m})(a(\cdot,t))(x)|\,\frac{dt}t
\ls \frac{2^{-k(n+\delta)}}{\|\chi_R\|_{\vlp}},$$
which, combined with \eqref{5.3x6}, implies that, for all $k\in\nn$ and $x\in D_k(R)$,
\begin{equation}\label{5.4x0}
|\alpha(x)|\ls \frac{2^{-k(n+\delta)}}{\|\chi_R\|_{\vlp}}.
\end{equation}
From this, together with Lemma \ref{l-bigsball}, it follows that
\begin{eqnarray}\label{5.4x1}
\|h_k\|_{L^2(\rn)}
&&\ls\|\alpha\|_{L^2(D_k(R))}
\ls 2^{-k(\frac n2+\delta)}\|\chi_R\|_{\vlp}^{-1}|R|^{\frac12}\\
&&\ls 2^{-k(n+\delta-\frac n{p_-})}\frac{|2^{k+1}R|^{\frac12}}
{\|2^{k+1}R\|_{\vlp}}\noz.
\end{eqnarray}
Thus, for each $k\in\nn$, $2^{k(n+\delta-\frac n{p_-})}h_k$ is a $(p(\cdot),2,0)$-atom
up to a positive constant multiple.
By \eqref{5.4x1} and the fact that $\delta>n(\frac1{p_-}-1)$, we find that
\begin{eqnarray*}
\lf\|\sum_{k=0}^\fz h_k\r\|_{L^2(\rn)}
\ls \sum_{k=0}^\fz\lf\| h_k\r\|_{L^2(\rn)}
\ls\|\chi_R\|_{\vlp}^{-1}|R|^{\frac12},
\end{eqnarray*}
which implies that $\sum_{k=0}^\fz h_k$ converges in $\cs'(\rn)$.
Moreover, from Remark \ref{r-vlp}(i) and the Fatou lemma of $\vlp$
(see \cite[Theorem 2.61]{cfbook}),
we deduce that
\begin{eqnarray}\label{5.4x1y}
&&\lf\|\lf\{\sum_{k=0}^\fz\lf[\frac{2^{-k(n+\delta-n/p_-)}\chi_{2^{k+1}R}}
{\|\chi_{2^{k+1}R}\|_{\vlp}}\r]^{\underline{p}}\r\}^{\frac1{\underline{p}}}\r\|_{\vlp}\\
&&\hs\ls\lf\{\sum_{k=0}^\fz\lf\|\lf[\frac{2^{-k(n+\delta-n/p_-)}\chi_{2^{k+1}R}}
{\|\chi_{2^{k+1}R}\|_{\vlp}}\r]^{\underline{p}}
\r\|_{L^{\frac{p(\cdot)}{\underline{p}}}}\r\}^{\frac1{\underline{p}}}\ls1.\noz
\end{eqnarray}
Therefore,
${\rm J_1}=\sum_{k=0}^\fz h_k\in H_{\rm at}^{p(\cdot),2}(\rn)$.

Next, we consider the term ${\rm J_2}$.
Obviously, for any $k\in\zz_+$,
$$\supp N_{k+1}(\wz\chi_{k+1}-\wz\chi_k)\subset 2^{k+1}R$$
and
$$\int_\rn N_{k+1}\lf[\wz\chi_{k+1}(x)-\wz\chi_k(x)\r]\,dx
=N_{k+1}\int_\rn \lf[\wz\chi_{k+1}(x)-\wz\chi_k(x)\r]\,dx=0.$$
On the other hand, by
\eqref{5.4x0} and Lemma \ref{l-bigsball}, we know that, for each $k\in\zz_+$,
\begin{eqnarray}\label{5.4x2}
\lf\|N_{k+1}(\wz\chi_{k+1}-\wz\chi_k)\r\|_{L^2(\rn)}
&&\ls \frac1{|2^kR|}\int_{(2^{k+1}R)^\complement}|\alpha(x)|\,dx\\
&&\ls 2^{-k(\frac n2+\delta)}\frac{|R|^{\frac 12}}{\|\chi_R\|_{\vlp}}\noz\\
&&\ls 2^{-k(n+\delta-\frac n{p_-})}\frac{|2^{k+1}R|^{\frac12}}
{\|2^{k+1}R\|_{\vlp}}.\noz
\end{eqnarray}
Therefore, for each $k\in\zz_+$, $2^{k(n+\delta-n/p_-)}N_{k+1}(\wz\chi_{k+1}-\wz\chi_k)$
is a $(p(\cdot),2,0)$-atom up to a positive constant multiple.
Moreover, from \eqref{5.4x2}, we deduce that
\begin{eqnarray*}
\lf\|\sum_{k\in\zz_+}N_{k+1}(\wz\chi_{k+1}-\wz\chi_k)\r\|_{L^2(\rn)}
\ls \|\chi_R\|_{\vlp}^{-1}|R|^{\frac12}
\end{eqnarray*}
and hence ${\rm J_2}=\sum_{k\in\zz_+}N_{k+1}(\wz\chi_{k+1}-\wz\chi_k)$
converges in $\cs'(\rn)$.
By this and \eqref{5.4x1y}, we conclude that
${\rm J_2}\in H_{\rm at}^{p(\cdot),2}(\rn)$.
Therefore, for the molecule $\az$, we have
\begin{equation}\label{5.4x3}
\az=\sum_{k\in\zz_+}\frac1{2^{k(n+\delta-n/p_-)}}\wz h_k+
\sum_{k\in\zz_+}\frac1{2^{k(n+\delta-n/p_-)}}\wz N_{k+1}\lf(\wz\chi_{k+1}
-\wz\chi_k\r)
\end{equation}
in $L^2(\rn)$ and hence in $\cs'(\rn)$,
where, for every $k\in\zz_+$, $\wz h_k$ and $\wz N_{k+1}(\wz\chi_{k+1}
-\wz\chi_k)$ are $(p(\cdot),2,0)$-atoms up to a positive constant multiple and,
moreover, $\az\in H_{\rm at}^{p(\cdot),2}(\rn)$.

Now, for all $f\in H_L^{p(\cdot)}(\rn)\cap L^2(\rn)$, by Theorem \ref{t-mol}(i),
we find that $f$ has an atomic decomposition
$f=\sum_{j\in\nn}\lz_j\alpha_j$,
where the summation converges in $L^2(\rn)$ and also in $H_L^{p(\cdot)}(\rn)$,
$\{\lz_j\}_{j\in\nn}\st\cc$ and $\{\alpha_j\}_{j\in\nn}$ are $(p(\cdot),s,L)$-molecules,
as in Definition \ref{d-m}, such that
$$\cb(\{\lz_j\az_j\}_{j\in\nn})\ls \|f\|_{H_L^{p(\cdot)}(\rn)}.$$
Moreover, we may assume that, for each $j\in\nn$,
 $\alpha_j$ is a molecule associated with some cube $R_j:=Q(x_j,r_j)$ for $x_j\in\rn$ and
$r_j\in(0,\fz)$.
Then, from what we have proved as in \eqref{5.4x3}, we deduce that
\begin{eqnarray*}
f=\sum_{j\in\nn}\sum_{k\in\zz_+}\frac{\lz_j}{2^{k(n+\delta-\frac n{p_-})}}\wz h_{j,k}+
\sum_{j\in\nn}\sum_{k\in\zz_+}\frac{\lz_j}{2^{k(n+\delta-\frac n{p_-})}}
\wz N_{j,k+1}\lf(\wz\chi_{j,k+1}-\wz\chi_{j,k}\r)
\end{eqnarray*}
in $L^2(\rn)$ and hence also in $\cs'(\rn)$,
where, for each $j\in\nn$ and $k\in\zz_+$,
$\wz h_{j,k}$ and $\wz N_{j,k+1}(\wz\chi_{j,k+1}-\wz\chi_{j,k})$
are $(p(\cdot),2,0)$-atoms, supported on $2^{k+1}R_j$, up to a positive
constant multiple. On the other hand,
by Lemma \ref{l-bigsball}, we see that, for any $j\in\nn$ and $k\in\zz_+$,
\begin{equation*}
\frac{\|\chi_{R_j}\|_{\vlp}}{\|\chi_{2^kR_j}\|_{\vlp}}
\ls\lf(\frac{|R_j|}{|2^{k}R_j|}\r)^{\frac1{p_+}}\sim 2^{-k\frac n{p_+}}.
\end{equation*}
Then, by choosing $\delta\in(0,1)$ such that
$\delta\in (n[\frac 2{p_-}-1-\frac 1{p_+}],1)$,
Lemma \ref{l-hlmo} and the fact that, for any $j\in\nn$, $k\in\zz_+$,
$r\in(0,\underline{p})$ and $x\in\rn$,
$$\chi_{2^{k}R_j}(x)\le 2^{kn/r}\lf[\cm(\chi_{R_j})(x)\r]^{\frac1r},$$
we deduce that
\begin{eqnarray*}
&&\lf\|\lf\{\sum_{j\in\nn}\sum_{k\in\zz_+}
\lf[\frac{|\lz_j|2^{-k(n+\delta-\frac n{p_-})}\chi_{2^{k+1}R_j}}
{\|\chi_{2^{k+1}R_j}\|_{\vlp}}\r]^{\underline{p}}\r\}^{\frac1{\underline{p}}}\r\|_{\vlp}\\
&&\hs\ls\lf\|\lf\{\sum_{j\in\nn}\sum_{k\in\zz_+}
\lf[\frac{|\lz_j|2^{-k(n+\delta+\frac n{p_+}-\frac n{p_-}-\frac nr)}
[\cm(\chi_{R_j})]^{\frac1r}
}{\|\chi_{R_j}\|_{\vlp}}\r]^{\underline{p}}\r\}^{\frac1{\underline{p}}}\r\|_{\vlp}\\
&&\hs\ls\lf\|\lf\{\sum_{j\in\nn}
\lf[\frac{|\lz_j|[\cm(\chi_{R_j})]^{\frac1r}
}{\|\chi_{R_j}\|_{\vlp}}\r]^{\underline{p}}\r\}^{\frac1{\underline{p}}}\r\|_{\vlp}\\
&&\hs\ls \cb(\{\lz_j\az_j\}_{j\in\nn})\ls \|f\|_{H_L^{p(\cdot)}(\rn)}.
\end{eqnarray*}
Therefore, $f\in H_{\rm at}^{p(\cdot),2}(\rn)$ and hence
$H_L^{p(\cdot)}(\rn)\subset H_{\rm at}^{p(\cdot),2}(\rn)$.
This finishes the proof of Theorem \ref{t-5.2x}.
\end{proof}

\begin{remark}
When $p(\cdot)$ is a constant exponent, Theorem \ref{t-5.2x}
goes back to \cite[Theorem 6.1]{yan08} (see also
\cite[Theorem 6.1]{jyz09}). We point out that the proof of
Theorem \ref{t-5.2x} borrows some ideas from the proof of \cite[Theorem 6.1]{jyz09}.
\end{remark}

\subsection{Fractional integrals $L^{-\gamma}$ on spaces $H_L^{p(\cdot)}(\rn)$}

\hskip\parindent
Let $L$ satisfy Assumptions \ref{as-a} and \ref{as-b} as in Subsection \ref{s2.2}.
In this subsection, we establish the boundedness of the fractional integral
on variable exponent Hardy spaces associated with the operator $L$.
Recall that, for any $\gamma\in(0,\frac nm)$ with $m$ as in Assumption \ref{as-a}, the generalized fractional
integral $L^{-\gamma}$ associated with $L$ is defined by setting,
for all $f\in L^2(\rn)$ and $x\in\rn$,
\begin{equation*}
L^{-\gamma}(f)(x):=\frac1{\Gamma(\gamma)}\int_0^\fz t^{\gamma-1}e^{-tL}(f)(x)\,dt,
\end{equation*}
where $\Gamma(\gz)$ is an appropriate positive constant; see \cite[p.\,4400]{yan08}.
Notice that, if $L:=-\Delta$ with $\Delta$ being the Laplacian, then
$L^{-\gz}$ becomes the classical fractional integral; see, for example,
\cite[Chapter 5]{stein70}. We also point out
that the Hardy-Littlewood-Sobolev inequality related to the semigroup itself of an operator was studied
by Yoshikawa \cite{Yosh70}.

\begin{remark}\label{r-frac}
Let $L$ satisfy Assumptions \ref{as-a} and \ref{as-b}.
For $\gamma\in(0,\frac nm)$ with $m$ as in Assumption \ref{as-a}, define the operator $\wz L^{-\gamma}$ by setting,
for all $f\in L^2(\rn)$ and $x\in\rn$,
$$\wz L^{-\gamma}(f)(x):=\frac1{\Gamma(\gamma)}\int_0^\fz t^{\gamma-1}
|e^{-tL}(f)(x)|\,dt.$$
It was proved in \cite[Lemma 5.1(ii)]{jyz09} that, if $\gz\in(0,\frac nm)$ and
$p_1,\,p_2\in(1,\fz)$
satisfy $\frac1{p_2}=\frac1{p_1}-\frac{m\gz}n$, then $\wz L^{-\gz}$
is bounded from $L^{p_1}(\rn)$ into $L^{p_2}(\rn)$.
\end{remark}

The main result of this subsection is stated as follows.

\begin{theorem}\label{t-frac}
Let $L$ satisfy Assumptions \ref{as-a} and \ref{as-b}, $\gz\in(0,\frac nm)$
with $m$ as in Assumption \ref{as-a},
$p(\cdot)\in C^{\log}(\rn)$ satisfy $\frac n{n+\theta(L)}<p_-\le p_+\le1$
with $p_-$, $p_+$ and $\theta(L)$, respectively, as in \eqref{2.1x} and \eqref{2.3x}.
Assume that $q(\cdot)$ is defined by setting, for all $x\in\rn$,
\begin{equation*}
\frac1{q(x)}:=\frac1{p(x)}-\frac {m\gz}n.
\end{equation*}
Then the fractional integral $L^{-\gz}$ maps $H_L^{p(\cdot)}(\rn)$
continuously into $H_L^{q(\cdot)}(\rn)$.
\end{theorem}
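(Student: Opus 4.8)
The plan is to exploit the molecular characterization from Theorem \ref{t-mol} together with the action of $L^{-\gz}$ on the semigroup building blocks. The natural strategy is: given $f\in H_L^{p(\cdot)}(\rn)$, decompose $f=\sum_{j\in\nn}\lz_j\az_j$ where each $\az_j=\pi_L(a_j)$ is a $(p(\cdot),s,L)$-molecule associated with a cube $Q_j$ and $\ca(\{\lz_j\}_{j\in\nn},\{Q_j\}_{j\in\nn})\ls\|f\|_{H_L^{p(\cdot)}(\rn)}$. Since $L^{-\gz}$ commutes with $Q_{s,t^m}$ and $P_{s_0,t^m}$ (all being functions of $L$), one expects $L^{-\gz}\az_j=\pi_L(t^{m\gz}a_j)$ (up to a constant from the $\Gamma$-factor and the defining constant $C_{(m,s)}$), i.e.\ applying $L^{-\gz}$ amounts to inserting a factor $t^{m\gz}$ inside the $\pi_L$ integral. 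The first step is therefore to verify this identity rigorously on $L^2(\rn)$ using Remark \ref{r-3.14x} and the functional calculus, reducing the theorem to showing that $t^{m\gz}a_j$, after suitable rescaling, behaves like a $(q(\cdot),\fz)$-atom (or at least a tent-space element controlled in the $\ca$-functional adapted to $q(\cdot)$), so that $\pi_L$ maps it into $H_L^{q(\cdot)}(\rn)$ via Proposition \ref{p-tent2}(ii).

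\textbf{The atomic/tent-space bookkeeping.} Concretely, if $a$ is a $(p(\cdot),\fz)$-atom supported in $\wh Q$, then $\supp(t^{m\gz}a)\st\wh Q$ as well, and for each $q\in(1,\fz)$ one has, since $t\le \ell(Q)$ on $\wh Q$,
$$\|t^{m\gz}a\|_{T_2^q(\urn)}\ls \ell(Q)^{m\gz}\|a\|_{T_2^q(\urn)}\ls \ell(Q)^{m\gz}|Q|^{1/q}\|\chi_Q\|_{\vlp}^{-1}.$$
The key point is that $\ell(Q)^{m\gz}\sim |Q|^{m\gz/n}$ and that Lemma \ref{l-bigsball}, together with the pointwise relation $\frac1{q(\cdot)}=\frac1{p(\cdot)}-\frac{m\gz}n$ (so $q_-,q_+$ satisfy the analogous relations), should give $|Q|^{m\gz/n}\|\chi_Q\|_{\vlp}^{-1}\sim \|\chi_Q\|_{L^{q(\cdot)}(\rn)}^{-1}$; here one needs the globally log-H\"older continuity of $p(\cdot)$ (hence of $q(\cdot)$) to control the norm of a characteristic function of a cube in terms of its measure, which is precisely what Lemma \ref{l-bigsball} and the arguments of \cite{zyl14} provide. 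With this, $|Q|^{-m\gz/n}t^{m\gz}a$ is (a constant multiple of) a $(q(\cdot),\fz)$-atom, and the remaining task is to show that $\ca(\{\lz_j|Q_j|^{m\gz/n}\}_{j\in\nn},\{Q_j\}_{j\in\nn})$ computed with the $L^{q(\cdot)}$-norm is dominated by $\ca(\{\lz_j\}_{j\in\nn},\{Q_j\}_{j\in\nn})$ computed with the $L^{p(\cdot)}$-norm. This last inequality is the Hardy-space analogue of the boundedness of the fractional integral on $L^{p(\cdot)}$ and can be derived from the vector-valued maximal inequality (Lemma \ref{l-hlmo}) exactly as in Remark \ref{r-hlmo}, combined with the pointwise bound $|Q_j|^{m\gz/n}\chi_{Q_j}\le |Q_j|^{m\gz/n}$ and an embedding argument on the modulars; one may also invoke the $L^{p(\cdot)}\to L^{q(\cdot)}$ boundedness of $\cm^{m\gz/n}$-type operators, available because $p(\cdot)\in C^{\log}(\rn)$.

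\textbf{Assembling the proof.} Having established that $L^{-\gz}\az_j$ is, up to a harmless constant and the normalizing factor $|Q_j|^{m\gz/n}$, a $(q(\cdot),s,L)$-molecule, I would then apply Theorem \ref{t-mol}(ii) with exponent $q(\cdot)$ (which is legitimate since $q_+\le 1$ automatically from $p_+\le 1$ and $m\gz>0$ forcing $q_+\le1$? --- one must check $q_+\le1$, which holds because $\frac1{q_+}=\frac1{p_+}-\frac{m\gz}n\ge 1-\frac{m\gz}n$; strictly one needs $\frac{m\gz}{n}$ small enough relative to $p_+$, and the hypothesis $\gz\in(0,n/m)$ together with $p_+\le 1$ gives $1/q_+ > 1 - 1 = 0$, while $q_+\le 1$ follows from $1/q_+\ge 1/p_+ - m\gz/n$ and needs $1/p_+\ge 1+m\gz/n$, i.e.\ a genuine restriction --- I would state this carefully or note that the theorem as stated implicitly requires $q_+\in(0,1]$). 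Also $q_-\in(\frac n{n+\theta(L)},1]$ must be checked, which follows from $p_-\in(\frac n{n+\theta(L)},1]$ and monotonicity. Then $\sum_j\lz_j|Q_j|^{m\gz/n}\cdot(|Q_j|^{-m\gz/n}L^{-\gz}\az_j)$ converges in $H_L^{q(\cdot)}(\rn)$ with norm $\ls \ca_{q(\cdot)}(\{\lz_j|Q_j|^{m\gz/n}\}_j,\{Q_j\}_j)\ls \ca_{p(\cdot)}(\{\lz_j\}_j,\{Q_j\}_j)\ls\|f\|_{H_L^{p(\cdot)}(\rn)}$. Finally, one checks that this sum equals $L^{-\gz}f$: on the dense subspace $H_L^{p(\cdot)}(\rn)\cap L^2(\rn)$ the identity $L^{-\gz}f=\sum_j\lz_j L^{-\gz}\az_j$ holds in $L^2(\rn)$ by Remark \ref{r-frac} (or directly, since $L^{-\gz}$ is $L^2$-bounded on the relevant range is false in general --- instead one uses that $f=\pi_L(Q_{t^m}f)$ in $L^2$ and $L^{-\gz}\pi_L(G)=c\,\pi_L(t^{m\gz}G)$), and then extend by density using the continuity just proved. \textbf{The main obstacle} I anticipate is the clean justification of the operator identity $L^{-\gz}\pi_L(G)=c\,\pi_L(t^{m\gz}G)$ --- making the interchange of $L^{-\gz}$ with the $\pi_L$-integral rigorous, controlling convergence, and handling the $\Gamma$-factor normalization --- since the definition of $L^{-\gz}$ as $\frac1{\Gamma(\gz)}\int_0^\fz t^{\gz-1}e^{-tL}\,dt$ does not literally commute with $Q_{s,t^m}(I-P_{s_0,t^m})$ under the integral without care, and one must verify the resulting molecule genuinely has the required support and size; a secondary subtlety is the norm comparison $|Q|^{m\gz/n}\|\chi_Q\|_{\vlp}^{-1}\sim\|\chi_Q\|_{L^{q(\cdot)}(\rn)}^{-1}$, which relies essentially on the log-H\"older regularity and on Lemma \ref{l-bigsball} being applied uniformly over all cubes.
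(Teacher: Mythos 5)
Your plan is architecturally reasonable but rests on a claimed operator identity that is false, and the paper takes a different route precisely to avoid this.

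You write that $L^{-\gz}\pi_L(G)=c\,\pi_L(t^{m\gz}G)$, i.e.\ that applying $L^{-\gz}$ to $\pi_L(G)$ simply inserts a factor $t^{m\gz}$ into the $\pi_L$-integral. Compute the symbol: $Q_{s,t^m}(I-P_{s_0,t^m})=(t^mL)^{s+1}e^{-t^mL}(I-e^{-t^mL})^{s_0+1}$, so
\begin{equation*}
L^{-\gz}\,Q_{s,t^m}(I-P_{s_0,t^m})=t^{m\gz}\,(t^mL)^{s+1-\gz}e^{-t^mL}(I-e^{-t^mL})^{s_0+1},
\end{equation*}
and the right-hand side involves the operator $\wz Q_{s,t^m}:=(t^mL)^{s+1-\gz}e^{-t^mL}$, which is \emph{not} $Q_{s,t^m}$. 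So $L^{-\gz}\pi_L(G)=\wz\pi_L(t^{m\gz}G)$ for a modified kernel $\wz\pi_L$ built from $\wz Q_{s,t^m}$, not $\pi_L(t^{m\gz}G)$. This is not just a regularity/interchange issue as your closing paragraph frames it; the identity is simply incorrect, and with it the reduction ``$L^{-\gz}\alpha_j$ is, up to a constant, a $(q(\cdot),s,L)$-molecule'' in the sense of Definition \ref{d-m} breaks down. One could in principle develop the analogue of Propositions \ref{p-tent}--\ref{p-tent2} for $\wz\pi_L$ (the symbol $z^{s+1-\gz}e^{-z}$ is still in $\Psi(S_v^0)$ when $s+1>\gz$, and its kernel obeys the same decay), but this is substantial new work that your outline does not do. The paper sidesteps the problem entirely: it never represents $L^{-\gz}\alpha_j$ as a molecule, but instead dominates $|L^{-\gz}f|$ pointwise by $\sum_j|\lz_j|\wz L^{-\gz}(\alpha_j)$ (the sublinear majorant of Remark \ref{r-frac}), splits $S_L(\wz L^{-\gz}(\alpha_j))$ into $4R_j$ and its complement, uses the pointwise decay from \cite[(5.3)]{yan08} off $4R_j$, and sums via Sawano's Lemma \ref{l-frac1}.

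Two secondary issues. First, your normalization step relies on $|Q|^{m\gz/n}\|\chi_Q\|_{\vlp}^{-1}\sim\|\chi_Q\|_{L^{q(\cdot)}(\rn)}^{-1}$, which you attribute to Lemma \ref{l-bigsball}; but that lemma compares $\|\chi_{Q_1}\|_{\vlp}$ with $\|\chi_{Q_2}\|_{\vlp}$ for two nested cubes under a \emph{single} exponent, and says nothing about passing from $p(\cdot)$ to $q(\cdot)$. The cross-exponent estimate is true under log-H\"older continuity, but the precise tool the paper needs (and uses) is Lemma \ref{l-frac1}, which gives the vector-valued inequality $\|\sum_j|\lz_j||R_j|^{\delta/n}\chi_{R_j}\|_{L^{q(\cdot)}}\ls\|\sum_j|\lz_j|\chi_{R_j}\|_{\vlp}$ directly; deriving this from a per-cube norm equivalence plus a maximal inequality is not a one-line step. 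Second, you correctly flag that invoking Theorem \ref{t-mol}(ii) for $q(\cdot)$ requires $q_+\le1$, which need not hold under the stated hypotheses; the paper avoids this by never using the molecular characterization at the target exponent --- it only needs the quasi-norm $\|S_L(\cdot)\|_{L^{q(\cdot)}}$ together with Lemmas \ref{l-estimate}, \ref{l-hlmo}, and \ref{l-frac1}, none of which require $q_+\le1$.
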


To prove Theorem \ref{t-frac}, we need the following technical lemma, which
is just \cite[Lemma 5.2]{Sa13} and plays a key role in the proof of Theorem
\ref{t-frac}.

\begin{lemma}\label{l-frac1}
Let $\delta\in(0,n)$ and $p(\cdot)\in C^{\log}(\rn)$ satisfy $p_+\in(0,\frac n\delta)$.
Assume that $q(\cdot)\in\cp(\rn)$ is defined by setting, for all $x\in\rn$,
$\frac1{q(x)}:=\frac1{p(x)}-\frac {\delta}n$. Then there exists a positive constant
$C$ such that, for all sequences $\{R_j\}_{j\in\nn}$ of cubes of $\rn$ and
$\{\lz_j\}_{j\in\nn}\st\cc$,
$$\lf\|\sum_{j\in\nn}|\lz_j||R_j|^{\frac{\delta}n}\chi_{R_j}\r\|_{L^{q(\cdot)}(\rn)}
\le C \lf\|\sum_{j\in\nn}|\lz_j|\chi_{R_j}\r\|_{L^{p(\cdot)}(\rn)}.$$
\end{lemma}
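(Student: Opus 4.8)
The plan is to derive the inequality from a pointwise domination of $\sum_j|\lz_j||R_j|^{\delta/n}\chi_{R_j}$ by a Riesz‑potential‑type quantity, and then to treat the resulting ``off‑diagonal'' $L^{q(\cdot)}$--$L^{p(\cdot)}$ bound by hand; the key technical input will be the globally log‑H\"older regularity of $p(\cdot)$ together with the cube‑comparison estimate of Lemma \ref{l-bigsball}.

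First I would make the usual harmless reductions: by the monotone convergence theorem it suffices to treat a finite family; we may assume each $\lz_j\ge0$; and replacing each $R_j$ by finitely many dyadic cubes of comparable side length covering it — a standard reduction that costs only a fixed constant on both sides — we may assume all $R_j$ are dyadic and, after merging repetitions, pairwise distinct. Writing $F:=\sum_j\lz_j\chi_{R_j}$ and $G:=\sum_j\lz_j|R_j|^{\delta/n}\chi_{R_j}$, homogeneity lets us normalize $\|F\|_{\vlp}=1$, so that $\int_\rn F(x)^{p(x)}\,dx\le1$; by Remark \ref{r-vlp}(iii) it then suffices to prove $\int_\rn G(x)^{q(x)}\,dx\le C$. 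Note also that, since $F\ge\lz_j\chi_{R_j}$ for each $j$, we get $\varrho_{p(\cdot)}(\lz_j\chi_{R_j})\le1$ and hence, again by Remark \ref{r-vlp}(iii), $\lz_j\lesssim\|\chi_{R_j}\|_{\vlp}^{-1}$.

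The core step is the pointwise estimate
$$G(x)\ \le\ C\,I_\delta F(x):=C\int_\rn\frac{F(y)}{|x-y|^{n-\delta}}\,dy\qquad(x\in\rn),$$
which holds because $\int_{R_j}|x-y|^{\delta-n}\,dy\gtrsim|R_j|^{\delta/n}$ uniformly for $x\in R_j$; crucially $G$ is supported in $\bigcup_jR_j=\{F>0\}$, so only the values of $I_\delta F$ on $\{F>0\}$ enter. On $\{F>0\}$ one estimates $I_\delta F$ by the Hedberg‑type splitting $I_\delta F(x)\le t^\delta\cm(F)(x)+\int_{|x-y|\ge t}|x-y|^{\delta-n}F(y)\,dy$, optimizing in $t\in(0,\fz)$ and using the normalization of $F$ and the defining relation $\frac1{q(\cdot)}=\frac1{p(\cdot)}-\frac\delta n$ (which forces $q(\cdot)\in C^{\log}(\rn)$, since $p_+<n/\delta$). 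When $p_->1$ the bound $\int_\rn G^{q(\cdot)}\lesssim1$ then follows from the Fefferman--Stein inequality for $\cm$ on $\vlp$ (Lemma \ref{l-hlmo}) together with a variable‑exponent Hedberg inequality; when $p_+\le1$ (the range needed for Theorem \ref{t-frac}) one argues directly on the dyadic structure: for each $x$ one writes $G(x)=\sum_{j:\,x\in R_j}\lz_j|R_j|^{\delta/n}$ as a sum over the nested dyadic cubes through $x$, applies H\"older's inequality in this sum with exponents $\frac n{n-\delta p(x)}$ and $\frac n{\delta p(x)}$, and uses $\lz_j\lesssim\|\chi_{R_j}\|_{\vlp}^{-1}$, the $C^{\log}$‑estimate $|R_j|^{1/p(x)}\lesssim\|\chi_{R_j}\|_{\vlp}$ for $x\in R_j$, and Lemma \ref{l-bigsball} to sum over scales — the same pattern of reasoning encapsulated in Lemma \ref{l-estimate}.

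The hard part is the interplay with the absence of translation invariance of $\|\cdot\|_{\vlp}$. For a cube $R_j$ of large side length, its points may see very different values of $p(\cdot)$, so neither the single‑cube comparison $|R_j|^{1/p(x)}\lesssim\|\chi_{R_j}\|_{\vlp}$ nor the geometric summation over scales is automatic; both must be extracted from the decay clause $|p(x)-p_\fz|\le C_\fz/\log(e+|x|)$ of the globally log‑H\"older condition, by decomposing such a cube into its bounded central part, where $p(\cdot)$ is controlled by local log‑H\"older continuity, and its far part, where $p(\cdot)\approx p_\fz$. This is precisely the mechanism that Lemma \ref{l-bigsball} is designed to furnish, and it is where essentially all the work lies; once it is in place, the two cases combine to yield $\int_\rn G(x)^{q(x)}\,dx\le C$ and hence, undoing the normalization, the claimed inequality.
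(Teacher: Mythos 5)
You are supplying a proof where the paper has none: Lemma \ref{l-frac1} is quoted verbatim as \cite[Lemma 5.2]{Sa13}, so there is no in-paper argument to compare with, and your attempt must stand on its own. Your reductions, the normalization, the bound $|\lz_j|\le\|\chi_{R_j}\|_{\vlp}^{-1}$ and the pointwise domination by $I_\delta F$ are all fine, and you correctly locate the two genuine difficulties (failure of the maximal-function route when $p_+\le1$, and large cubes). But the step you propose for the case $p_+\le1$ — the only case the paper uses — fails as written. H\"older with exponents $\frac n{n-\delta p(x)}$ and $\frac n{\delta p(x)}$ applied to $\sum_{j:\,x\in R_j}\lz_j|R_j|^{\delta/n}=\sum_j\lz_j^{1-\delta p(x)/n}\cdot\lz_j^{\delta p(x)/n}|R_j|^{\delta/n}$ gives
$$F(x)^{1-\frac{\delta p(x)}n}\lf(\sum_{j:\,x\in R_j}\lz_j|R_j|^{\frac1{p(x)}}\r)^{\frac{\delta p(x)}n},$$
and the second factor is precisely the one with no decay: by your own estimates $\lz_j|R_j|^{1/p(x)}\sim\lz_j\|\chi_{R_j}\|_{\vlp}\ls1$ for each $j$, so the sum over the (unboundedly many) dyadic scales through $x$ is not controlled — there is nothing for Lemma \ref{l-bigsball} to ``sum over scales''. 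Choosing exponents that do give geometric decay in the second factor destroys the identity $(1-\theta)q(x)=p(x)$ in the first, and the region $\{F>1\}$ is then lost. The mechanism that works (and is the one in Sawano's proof) is a Hedberg-type splitting of the sum at a scale $t=t(x)$ chosen in terms of $F(x)$: cubes with $\ell(R_j)\le t$ are estimated trivially by $t^\delta F(x)$ with no H\"older at all, and only the scales above $t$ are summed geometrically using $\lz_j\ls\|\chi_{R_j}\|_{\vlp}^{-1}$.

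Two further steps are asserted but not available from the tools you invoke. First, for cubes with $|R_j|\ge1$ one needs the two-sided asymptotics $\|\chi_Q\|_{\vlp}\sim|Q|^{1/p_\fz}$ (and the analogue for $q(\cdot)$) coming from the decay clause of the $C^{\log}$ condition, together with the standard exponent-switching estimate that converts $F^{p_\fz q(x)/q_\fz}$ into $F^{p(x)}$ plus an integrable error on $\{F\le1\}$; Lemma \ref{l-bigsball}, whose exponents are the global $p_\pm$ of \eqref{2.1x}, is strictly too lossy for this (a single cube $R$ with $|R|$ large and $\lz=\|\chi_R\|_{\vlp}^{-1}$ already defeats the $p_+$-version of the modular estimate when $q_-<q_+$), so attributing this mechanism to Lemma \ref{l-bigsball} leaves the hardest part of the proof undone. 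Second, the dyadic reduction replaces the right-hand side by $\|\sum_j\lz_j\chi_{cR_j}\|_{\vlp}$ for a fixed dilation $c$; for $p_+\le1$ this $\ell^1$-type dilation inequality is not covered by Lemma \ref{l-hlmo} or Remark \ref{r-hlmo} (which concern $\ell^r$-aggregates with $r>1$, respectively the $\ell^{\underline{p}}$-functional $\ca$, not the plain sum), so it needs its own proof or the reduction should be avoided.
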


\begin{proof}[Proof of Theorem \ref{t-frac}]
To prove this theorem, we only need to show that, for all $f\in H_L^{p(\cdot)}(\rn)\cap L^2(\rn)$,
\begin{equation}\label{frac-2}
\|S_L(L^{-\gz}(f))\|_{L^{q(\cdot)}(\rn)}\ls
\|f\|_{H_L^{p(\cdot)}(\rn)},
\end{equation}
since $H_L^{p(\cdot)}(\rn)\cap L^2(\rn)$ is dense in $H_L^{p(\cdot)}(\rn)$.

Let $f\in H_L^{p(\cdot)}(\rn)\cap L^2(\rn)$.
Then, by Theorem \ref{t-mol}(ii), we see that
there exist $\{\lz_j\}_{j\in\nn}\st\cc$ and a sequence $\{\alpha_j\}_{j\in\nn}$
of $(p(\cdot),s_0,L)$-molecules associated with cubes $\{R_j\}_{j\in\nn}$
such that
$f=\sum_{j\in\nn}\lz_j \alpha_j$ in $H_L^{p(\cdot)}(\rn)$ and also in $L^2(\rn)$, and
$$\ca(\{\lz_j\}_{j\in\nn},\{R_j\}_{j\in\nn})\ls\|f\|_{H_L^{p(\cdot)}(\rn)}.$$
Observe that $L^{-\gz}$ is bounded from $L^{2}(\rn)$ to $L^q(\rn)$ for some
$q\in(1,\fz)$ such that $\frac1q=\frac12-\frac{m\gz}n$
(see \cite[Theorem 5.1]{yan08} and also Remark \ref{r-frac}).
It follows that, for almost every $x\in\rn$,
\begin{eqnarray*}
|L^{-\gz}(f)(x)|
\ls\sum_{j\in\nn}|\lz_j|\int_0^\fz t^{\gamma-1}|e^{-tL}(\az_j)(x)|\,dt
=:\sum_{j\in\nn}|\lz_j|\wz L^{-\gamma}(\az_j)(x)
\end{eqnarray*}
and hence
\begin{eqnarray}\label{frac-2x}
\lf\|S_L(L^{-\gamma}(f))\r\|_{L^{q(\cdot)}(\rn)}
&&\ls\lf\|\sum_{j\in\nn}|\lz_j|S_L(\wz L^{-\gz}(\az_j))
\chi_{4R_j}\r\|_{L^{q(\cdot)}(\rn)}\\
&&\quad+
\lf\|\sum_{j\in\nn}|\lz_j|S_L(\wz L^{-\gz}(\az_j))
\chi_{(4R_j)^\complement}\r\|_{L^{q(\cdot)}(\rn)}
=:{\rm I}_1+{\rm I}_2.\noz
\end{eqnarray}

To deal with ${\rm I}_1$, let $r\in(1,2)$. Then,
by the H\"older inequality, \eqref{SL-bounded}, Proposition \ref{p-tent2} and Remark \ref{r-frac},
 we find that
\begin{eqnarray*}
&&\lf\|S_L(\wz L^{-\gz}(\az_j))\chi_{4R_j}\r\|_{L^r(\rn)}\\
&&\hs\ls|R_j|^{\frac1r-\frac1q}\lf\|S_L(\wz L^{-\gz}(\az_j))\chi_{4R_j}\r\|_{L^q(\rn)}
\ls|R_j|^{\frac1r-\frac1q}\lf\|\wz L^{-\gz}(\az_j)\chi_{4R_j}\r\|_{L^q(\rn)}\\
&&\hs \ls|R_j|^{\frac1r-\frac1q}\|\az_j\|_{L^2(\rn)}
\ls[\ell(R_j)]^{m\gz}\frac{|R_j|^{\frac1r}}{\|\chi_{R_j}\|_{\vlp}}.
\end{eqnarray*}
This, combined with Lemmas \ref{l-estimate} and \ref{l-frac1}, implies that
\begin{eqnarray}\label{frac-2y}
{\rm I}_1&&\ls\lf\|\sum_{j\in\nn}
\lf[\frac{|\lz_j|}{\|\chi_{R_j}\|_{\vlp}}\r]^{\underline{q}}
[\ell(R_j)]^{m\gz\underline{q}}\chi_{4R_j}
\r\|_{L^{\frac{q(\cdot)}{\underline{q}}}}^{\frac1{\underline{q}}}\\
&&\ls\lf\|\sum_{j\in\nn}
\lf[\frac{|\lz_j|\chi_{4R_j}}{\|\chi_{R_j}\|_{\vlp}}\r]^{\underline{q}}
\r\|_{L^{\frac{p(\cdot)}{\underline{q}}}}^{\frac1{\underline{q}}}
\ls\lf\|\lf\{\sum_{j\in\nn}\lf[\frac{|\lz_j|\chi_{4R_j}}{\|\chi_{R_j}\|_{\vlp}}
\r]^{\underline{p}}\r\}^{\frac1{\underline{p}}}\r\|_{\vlp}\noz\\
&&\sim \ca(\{\lz_j\}_{j\in\nn},\{R_j\}_{j\in\nn})\ls\|f\|_{H_L^{p(\cdot)}(\rn)},\noz
\end{eqnarray}
where $\underline{q}:=\min\{1,q_-\}$ with $q_-$ as in \eqref{2.1x} via
$p(\cdot)$ replaced by $q(\cdot)$.

Next, we estimate ${\rm I}_2$. Since $\frac n{n+\theta(L)}<p_-$, it follows that
there exists $\vez\in(0,\theta(L))$ such that $\frac n{n+\vez}<p_-$. Moreover,
we may choose $r_0\in(0,p_-)$ such that $\vez\in(n[\frac1{r_0}-1],\theta(L))$.
Assume that, for each $j\in\nn$, $R_j:=Q(x_j,r_j)$ for some $x_j\in\rn$ and
$r_j\in(0,\fz)$. Then, by an argument similar to that used in the proof
of \cite[(5.3)]{yan08}, we conclude that, for all $j\in\nn$ and
$x\in (4R_j)^\complement$,
\begin{equation}\label{frac-3}
S_L(\wz L^{-\gz}(\az_j))(x)\ls \frac{|r_j|^{\vez+m\gz+n}}{|x-x_{j}|^{n+\vez}}
\frac1{\|\chi_{R_j}\|_{\vlp}}.
\end{equation}
For any $k,\,j\in\nn$, let
$D_k(R_j):=(2^{k+2}R_j)\backslash (2^{k+1}R_j)$. Then,
by \eqref{frac-3} and Lemma \ref{l-frac1}, we see that
\begin{eqnarray*}
{\rm I}_2
&&\ls \lf\|\sum_{k,\,j\in\nn}\frac{|\lz_j|}{\|\chi_{R_j}\|_{\vlp}}
\frac{r_j^{\vez+m\gz+n}}{|\cdot-x_j|^{n+\vez}}\chi_{D_k(R_j)}\r\|_{L^{q(\cdot)}(\rn)}\\
&&\ls \lf\|\sum_{k,\,j\in\nn}\frac{|\lz_j|}{\|\chi_{R_j}\|_{\vlp}}
\frac{(2^kr_j)^{m\gz}}{2^{k(n+\vez+m\gz)}}\chi_{2^{k+2}R_j}\r\|_{L^{q(\cdot)}(\rn)}\\
&&\ls \lf\|\sum_{k,\,j\in\nn}\frac{|\lz_j|}{\|\chi_{R_j}\|_{\vlp}}
\frac{\chi_{2^{k+2}R_j}}{2^{k(n+\vez+m\gz)}}\r\|_{L^{p(\cdot)}(\rn)}.
\end{eqnarray*}
Thus, from Lemma \ref{l-hlmo}, Remark \ref{r-vlp}(i) and
the fact that, for any $k\in\nn$,
$$\chi_{2^{k+2}R_j}\ls 2^{kn/r_0}[\cm(\chi_{R_j})]^{1/r_0},$$
we deduce that
\begin{eqnarray*}
{\rm I}_2
&&\ls \lf[\sum_{k\in\nn}\frac{2^{kn\underline{p}/r_0}}
{2^{k\underline{p}(n+\vez+m\gz)}}\lf\|\sum_{j\in\nn}
\lf\{\frac{|\lz_j|}{\|\chi_{R_j}\|_{\vlp}}[\cm(\chi_{R_j})]^\frac1{r_0}
\r\}^{\underline{p}}\r\|_{L^{\frac{p(\cdot)}{\underline{p}}}(\rn)}
\r]^{\frac1{\underline{p}}}\\
&&\ls\lf\|\lf\{\sum_{j\in\nn}\lf[\frac{|\lz_j|\chi_{R_j}}{\|\chi_{R_j}
\|_{\vlp}}\r]^{\underline{p}}
\r\}^{\frac1{\underline{p}}}\r\|_{\vlp}\ls\|f\|_{H_L^{p(\cdot)}(\rn)}.
\end{eqnarray*}
This, together with \eqref{frac-2x} and \eqref{frac-2y}, implies that
\eqref{frac-2} holds true,
which shows that $L^{-\gz}$ is bounded from $H_L^{p(\cdot)}(\rn)$
to $H_L^{q(\cdot)}(\rn)$ and hence completes the proof of Theorem \ref{t-frac}.
\end{proof}

\begin{remark}
In the case of constant exponents, Theorem \ref{t-frac} was obtained by Yan
\cite[Theorem 5.1]{yan08}.
\end{remark}

\emph{Acknowledgements.} The authors would like to express their
deep thanks to the referee for his very careful reading and useful
comments which do improve the presentation of this article.

\bigskip

\noindent  Dachun Yang  and Ciqiang Zhuo (Corresponding author)

\medskip

\noindent  School of Mathematical Sciences, Beijing Normal University,
Laboratory of Mathematics and Complex Systems, Ministry of
Education, Beijing 100875, People's Republic of China

\smallskip

\noindent {\it E-mails}: \texttt{dcyang@bnu.edu.cn} (D. Yang)

\hspace{0.98cm} \texttt{cqzhuo@mail.bnu.edu.cn} (C. Zhuo)



\begin{thebibliography}{10}

\vspace{0.26cm}

\bibitem{am02} E. Acerbi and G. Mingione, Regularity results for stationary
electro-rheological fluids, Arch. Ration. Mech. Anal. 164 (2002), 213-259.

\vspace{-0.26cm}

\bibitem{am05} E. Acerbi and G. Mingione,
Gradient estimates for the $p(x)$-Laplacean system,
J. Reine Angew. Math. 584 (2005), 117-148.

\vspace{-0.26cm}

\bibitem{adm96}
D. Albrecht, X. T. Duong and A. McIntosh, Operator theory and harmonic analysis,
Instructional Workshop on Analysis and Geometry, Part III (Canberra, 1995),
77-136, Proc. Centre Math. Appl. Austral. Nat. Univ., 34,
Austral. Nat. Univ., Canberra, 1996.

\vspace{-0.26cm}

\bibitem{ah10} A. Almeida and P. H\"ast\"o,
Besov spaces with variable smoothness and integrability,
J. Funct. Anal. 258 (2010), 1628-1655.

%

\vspace{-0.26cm}

\bibitem{admun} P. Auscher, X. T. Duong and A. McIntosh, Boundedness of Banach space
valued singular integral operators and Hardy spaces, unpublished manuscript, 2005.

%

%

%

\vspace{-0.26cm}

\bibitem{bo31} Z. Birnbaum and W. Orlicz, \"Uber die Verallgemeinerung
des Begriffes der zueinander konjugierten Potenzen, Studia Math.
3 (1931), 1-67.

\vspace{-0.26cm}

\bibitem{bk04} S. Blunck and P. C. Kunstmann, Weak type $(p,p)$ estimates for Riesz
transforms, Math. Z. 247 (2004), 137-148.

\vspace{-0.26cm}

\bibitem{bckyy}
T. A. Bui, J. Cao, L. D. Ky, D. Yang and S. Yang, Musielak-Orlicz-Hardy spaces
associated with operators satisfying reinforced off-diagonal estimates,
Anal. Geom. Metr. Spaces 1 (2013), 69-129.

\vspace{-0.26cm}

\bibitem{clr06} Y. Chen, S. Levine and M. Rao,
Variable exponent, linear growth functionals in image restoration,
SIAM J. Appl. Math. 66 (2006), 1383-1406.

\vspace{-0.26cm}

\bibitem{cms85} R. R. Coifman, Y. Meyer and E. M. Stein, Some new function
spaces and their applications to harmonic analysis,
J. Funct. Anal. 62 (1985), 304-335.

\vspace{-0.26cm}

\bibitem{CoWe77} R. R. Coifman and G. Weiss, Extensions of Hardy spaces and their
use in analysis, Bull. Amer. Math. Soc. 83 (1977), 569-645.

\vspace{-0.26cm}

\bibitem{cdu00} T. Coulhon and X. T. Duong, Maximal regularity and kernel
bounds: observations on a theorem by Hieber and Pr\"uss, Adv.
Differential Equations 5 (2000), 343-368.

\vspace{-0.26cm}

\bibitem{cruz03} D. Cruz-Uribe, The Hardy-Littlewood maximal operator on variable-$L^p$
 spaces, Seminar of Mathematical Analysis (Malaga/Seville, 2002/2003), 147-156,
 Colecc. Abierta, 64, Univ. Sevilla Secr. Publ., Seville, 2003.

\vspace{-0.26cm}

\bibitem{cfbook} D. V. Cruz-Uribe and A. Fiorenza, Variable Lebesgue Spaces.
Foundations and Harmonic Analysis, Applied and Numerical Harmonic Analysis,
Birkh\"auser/Springer, Heidelberg, 2013.

\vspace{-0.26cm}

\bibitem{cfmp06} D. Cruz-Uribe, A. Fiorenza, J. M. Martell and C. P\'erez,
The boundedness of classical operators on variable $L^p$ spaces,
Ann. Acad. Sci. Fenn. Math. 31 (2006), 239-264.

\vspace{-0.26cm}

\bibitem{cw14}
D. Cruz-Uribe and L.-A. D. Wang, Variable Hardy spaces,
Indiana Univ. Math. J. 63 (2014), 447-493.

\vspace{-0.26cm}

\bibitem{din04} L. Diening, Maximal function on generalized Lebesgue
 spaces $\vlp$, Math. Inequal. Appl. 7 (2004), 245-253.
\vspace{-0.26cm}

\bibitem{dhr11} L. Diening, P. Harjulehto, P. H\"ast\"o and
M. R{$\mathring{\rm u}$}\v{z}i\v{c}ka,
Lebesgue and Sobolev Spaces with
Variable Exponents, Lecture Notes in Mathematics, 2017,
Springer, Heidelberg, 2011.

\vspace{-0.26cm}

\bibitem{dhr09} L. Diening, P. H\"ast\"o and S. Roudenko, Function spaces of
variable smoothness and integrability, J. Funct. Anal. 256 (2009), 1731-1768.

\vspace{-0.26cm}

\bibitem{DDr15}
D. Drihem, Some properties of variable Besov-type spaces,
Funct. Approx. Comment. Math. 52 (2015), 193-221.

%
%
%

%

\vspace{-0.26cm}

\bibitem{dm99} X. T. Duong and A. MacIntosh, Singular integral operators
with non-smooth kernels on irregular domains,
 Rev. Mat. Ibero. 15 (1999), 233-265.

\vspace{-0.26cm}

\bibitem{dxy07}
X. T. Duong, J. Xiao and L. Yan,
Old and new Morrey spaces with heat kernel bounds,
J. Fourier Anal. Appl. 13 (2007), 87-111.

\vspace{-0.26cm}

\bibitem{dy05} X. T. Duong and L. Yan, Duality of Hardy and BMO spaces associated
 with operators with heat kernel bounds, J. Amer. Math. Soc. 18 (2005), 943-973.

\vspace{-0.26cm}

\bibitem{dy05cpam} X. T. Duong and L. Yan, New function spaces of BMO type,
the John-Nirenberg inequality, interpolation, and applications, Comm. Pure Appl. Math.
58 (2005), 1375-1420.

%

%
%
%

%

\vspace{-0.26cm}

\bibitem{FeSt72} C. Fefferman and E. M. Stein, $H^p$
 spaces of several variables, Acta Math. 129 (1972), 137-193.

%

%

\vspace{-0.26cm}

\bibitem{hhl08} P. Harjulehto, P. H\"ast\"o and V. Latvala, Minimizers of the
variable exponent, non-uniformly convex Dirichlet energy, J. Math. Pures Appl.
(9) 89 (2008), 174-197.


\vspace{-0.26cm}

\bibitem{hm03} S. Hofmann and J. M. Martell, $L^p$ bounds for Riesz transforms
and square roots associated to second order elliptic operators,
 Publ. Mat. 47 (2003), 497-515.

\vspace{-0.26cm}

\bibitem{Iz10}
M. Izuki, Vector-valued inequalities on Herz spaces and characterizations
of Herz-Sobolev spaces with variable exponent,
Glas. Mat. Ser. III 45 (65) (2010), 475-503.

\vspace{-0.26cm}

\bibitem{jy10} R. Jiang and D. Yang, New Orlicz-Hardy spaces associated with
divergence form elliptic operators, J. Funct. Anal. 258 (2010), 1167-1224.

\vspace{-0.26cm}

\bibitem{jyz09} R. Jiang, D. Yang and Y. Zhou, Orlicz-Hardy spaces associated
with operators, Sci. China Ser. A 52 (2009), 1042-1080.

\vspace{-0.26cm}

\bibitem{kr91} O. Kov\'a\v{c}ik and J. R\'akosn\'{\i}k, On spaces
$L^{p(x)}$ and $W^{k,p(x)}$, Czechoslovak Math. J. 41 (116) (1991), 592-618.

\vspace{-0.26cm}

\bibitem{ky14} L. D. Ky,
New Hardy spaces of Musielak-Orlicz type and boundedness of
sublinear operators, Integral Equations Operator Theory
78 (2014), 115-150.

\vspace{-0.26cm}

\bibitem{ls11}
S. Liu and L. Song, Boundedness of functional calculi of Schr\"odinger operators
on generalized Lebesgue spaces $L^p(\rn)$, Manuscripta Math. 138 (2012), 119-139.

\vspace{-0.26cm}

\bibitem{Lu55} W. Luxemburg, Banach Function Spaces,
Thesis, Technische Hogeschool te Delft, 1955.

\vspace{-0.26cm}

\bibitem{mc86} A. McIntosh, Operators which have an $H^\fz$ functional calculus,
Miniconference on operator theory and partial differential equations
(North Ryde, 1986), 210-231, Proc. Centre Math. Anal. Austral. Nat. Univ.,
14, Austral. Nat. Univ., Canberra, 1986.

\vspace{-0.26cm}

\bibitem{muller94} S. M\"uller, Hardy space methods
for nonlinear partial differential equations,
Tatra Mt. Math. Publ. 4 (1994), 159-168.

\vspace{-0.26cm}

\bibitem{ns12} E. Nakai and Y. Sawano, Hardy spaces with variable exponents
and generalized Campanato spaces, J. Funct. Anal. 262 (2012), 3665-3748.

\vspace{-0.26cm}

\bibitem{nak50} H. Nakano, Modulared Semi-Ordered Linear Spaces,
 Maruzen Co., Ltd., Tokyo, 1950.

\vspace{-0.26cm}

\bibitem{nak51} H. Nakano, Topology of Linear Topological Spaces,
Maruzen Co., Ltd., Tokyo, 1951.

\vspace{-0.26cm}

%

\bibitem{ol32} W. Orlicz, \"Uber eine gewisse Klasse
von R\"aumen vom Typus B,
Bull. Int. Acad. Pol. Ser. A 8 (1932), 207-220.

\vspace{-0.26cm}

\bibitem{ou05} E. M. Ouhabaz, Analysis of Heat Equations on Domains,
London Mathematical Society Monographs Series, 31. Princeton University Press,
Princeton, NJ, 2005.

\vspace{-0.26cm}

\bibitem{rm00} M. R$\mathring {\rm u}$\v{z}i\v{c}ka,
 Electrorheological Fluids: Modeling and Mathematical Theory,
Lecture Notes in Mathematics 1748, Springer-Verlag, Berlin, 2000.

\vspace{-0.26cm}

\bibitem{su09} M. Sanch\'on and J. M. Urbano, Entropy
solutions for the $p(x)$-Laplace equation,
 Trans. Amer. Math. Soc. 361 (2009), 6387-6405.

\vspace{-0.26cm}

\bibitem{Sa13}
Y. Sawano, Atomic decompositions of Hardy spaces with variable exponents
and its application to bounded linear operators,
Integral Equations Operator Theory 77 (2013), 123-148.

%

%


\vspace{-0.26cm}

\bibitem{stein70}
E. M. Stein, Singular Integrals and Differentiability Properties of
Functions, Princeton Mathematical Series, No. 30 Princeton University Press,
Princeton, N.J. 1970.

\vspace{-0.26cm}

\bibitem{stein93} E. M. Stein,
Harmonic Analysis: Real-Variable Methods, Orthogonality, and Oscillatory
Integrals, Princeton Mathematical Series, 43. Monographs in Harmonic Analysis, III.
Princeton University Press, Princeton, NJ, 1993.


%
%
%

\vspace{-0.26cm}

\bibitem{yan04} L. Yan, Littlewood-Paley functions associated to second order
elliptic operators, Math. Z. 246 (2004), 655-666.

\vspace{-0.26cm}

\bibitem{yan08} L. Yan, Classes of Hardy spaces associated with operators,
duality theorem and applications, Trans. Amer. Math. Soc. 360 (2008), 4383-4408.

%
%
%
%
%

\vspace{-0.26cm}

\bibitem{yyz14}
D. Yang, W. Yuan and C. Zhuo, Musielak-Orlicz Besov-type and
Triebel-Lizorkin-type spaces, Rev. Mat. Complut. 27 (2014), 93-157.

\vspace{-0.26cm}

\bibitem{yzy14} D. Yang, C. Zhuo and W. Yuan, Triebel-Lizorkin type
spaces with variable exponents, Banach J. Math. Anal. 9 (2015), no. 4, 146-202.

\vspace{-0.26cm}

\bibitem{yzy15}
D. Yang, C. Zhuo and W. Yuan, Besov-type spaces with variable smoothness
and integrability, J. Funct. Anal. 269 (2015), 1840-1898.

\vspace{-0.26cm}

\bibitem{Yosh70}
A. Yoshikawa, An operator theoretical remark on the
Hardy-Littlewood-Sobolev inequality, J. Fac. Sci. Univ. Tokyo Sect. I A Math.
17 (1970), 559-566.

\vspace{-0.26cm}

\bibitem{Yo78}
K. Yosida, Functional Analysis. Sixth edition.
Grundlehren der Mathematischen Wissenschaften, Band 123.
Springer-Verlag, Berlin-New York, 1978.

\vspace{-0.26cm}

\bibitem{zsy15}
C. Zhuo, Y. Sawano and D. Yang,
Hardy spaces with variable exponents on RD-spaces and applications,
Dissertationes Math. (Rozprawy Mat.) (to appear).

\vspace{-0.26cm}

\bibitem{zyl14} C. Zhuo, D. Yang and Y. Liang, Intrinsic square function
characterizations of Hardy spaces with variable exponents, Bull.
Malays. Math. Sci. Soc. (2) (to appear) or arXiv: 1411.5535.

\end{thebibliography}
\end{document}